\newcommand{\A}{\mathbb{A}}
\newcommand{\C}{\mathbb{C}}
\newcommand{\N}{\mathbb{N}}
\newcommand{\Q}{\mathbb{Q}}
\newcommand{\R}{\mathbb{R}}
\newcommand{\Z}{\mathbb{Z}}
\newcommand{\cA}{\mathcal{A}}
\newcommand{\Ag}{\mathcal{A}_g}
\newcommand{\fa}{\mathfrak{a}}
\newcommand{\fb}{\mathfrak{b}}
\newcommand{\fo}{\mathfrak{o}}
\newcommand{\fp}{\mathfrak{p}}
\newcommand{\fs}{\mathfrak{s}}
\newcommand{\gG}{\mathbf{G}}
\newcommand{\gH}{\mathbf{H}}
\newcommand{\gS}{\mathbf{S}}
\newcommand{\gU}{\mathbf{U}}
\newcommand{\rM}{\mathrm{M}}
\DeclareMathOperator{\Aut}{Aut}
\DeclareMathOperator{\Br}{Br}
\DeclareMathOperator{\diag}{diag}
\DeclareMathOperator{\End}{End}
\DeclareMathOperator{\GL}{GL}
\DeclareMathOperator{\Hom}{Hom}
\DeclareMathOperator{\Nm}{N}
\DeclareMathOperator{\Nrd}{Nrd}
\DeclareMathOperator{\Tr}{Tr}
\newcommand{\abs}[1]{\left\lvert #1 \right\rvert}
\newcommand{\defterm}[1]{\textbf{#1}}
\newcommand{\emptybar}{\bar{\phantom{v}}}
\newtheorem{lemma}{Lemma}[section]
\newtheorem{proposition}[lemma]{Proposition}
\newtheorem{theorem}[lemma]{Theorem}
\newtheorem{corollary}[lemma]{Corollary}
\newtheorem{conjecture}[lemma]{Conjecture}
\Crefname{conjecture}{Conjecture}{Conjectures}
\newtheorem*{proposition*}{Proposition}
\newtheorem*{theorem*}{Theorem}
\newtheorem*{corollary*}{Corollary}
\newtheorem{nosecclaim}{Claim}
\Crefname{nosecclaim}{Claim}{Claims}
\theoremstyle{definition}
\title[Compatibility between isogenies and polarizations]{On compatibility between isogenies and polarizations of abelian varieties}
\author{Martin Orr}
\subjclass[2010]{11E39, 14K02}
\keywords{Abelian varieties, isogenies, polarizations, Hermitian forms}
\newcommand{\cp}[1]{c_{p,#1}}
\numberwithin{equation}{section}
\begin{document}

\begin{abstract}
We discuss the notion of polarized isogenies of abelian varieties, that is, isogenies which are compatible with given principal polarizations.
This is motivated by problems of unlikely intersections in Shimura varieties.
Our aim is to show that certain questions about polarized isogenies can be reduced to questions about unpolarized isogenies or vice versa.

Our main theorem concerns abelian varieties~\( B \) which are isogenous to a fixed abelian variety~\( A \).
It establishes the existence of a polarized isogeny \( A \to B \) whose degree is polynomially bounded in~\( n \), if there exist both an unpolarized isogeny \( A \to B \) of degree~\( n \) and a polarized isogeny \( A \to B \) of unknown degree.
As a further result, we prove that given any two principally polarized abelian varieties related by an unpolarized isogeny, there exists a polarized isogeny between their fourth powers.

The proofs of both theorems involve calculations in the endomorphism algebras of the abelian varieties, using the Albert classification of these endomorphism algebras and the classification of Hermitian forms over division algebras.
\end{abstract}

\maketitle

\section{Introduction}

The goal of this paper is to prove some results about the existence of polarized isogenies between abelian varieties, motivated by work on the André--Pink conjecture on unlikely intersections.
The endomorphism algebra of an abelian variety is a semisimple \( \Q \)-algebra with involution, and polarizations of the abelian variety correspond to positive definite Hermitian forms over this algebra.
Hence most of the paper is in fact concerned with isometries of Hermitian forms over such algebras.

\subsection{Abelian varieties, isogenies and polarizations}

We begin by recalling a number of definitions: isogenies and polarizations of abelian varieties, and the notion of polarized isogenies which are the objects of our main theorems.

An \defterm{abelian variety} is a complete algebraic variety equipped with multiplication and inverse maps which make it into a group object in the category of algebraic varieties over some field.
For the purposes of this paper, it does not matter what the base field is (algebraically closed or not, positive characteristic or characteristic zero).
See \cite{mumford:abelian-varieties} and \cite{milne:abelian-varieties-old} for the main results about abelian varieties.

An \defterm{isogeny} is a homomorphism of abelian varieties which is surjective and finite as a morphism of varieties.
The relation ``there exists an isogeny from \( A \) to \( B \)'' is an equivalence relation on abelian varieties, a bit weaker than isomorphism, which preserves many geometric and arithmetic invariants, for example the endomorphism algebra of the abelian variety.
The \defterm{degree} of an isogeny is its degree as a morphism of varieties.

A \defterm{polarization} of an abelian variety \( A \) is an isogeny \( A \to A^\vee \) (where \( A^\vee \) is the dual abelian variety) which is induced by an ample line bundle on \( A_{\bar{k}} \) according to a certain recipe, whose details are not important here.
We say that a polarization is \defterm{principal} if it has degree~\( 1 \).
Every abelian variety possesses at least one polarization, but not always a principal polarization.
A \defterm{principally polarized abelian variety} is a pair \( (A, \lambda) \) consisting of an abelian variety \( A \) and a principal polarization \( \lambda \) of~\( A \).

The \defterm{endomorphism ring} \( \End A \) of an abelian variety \( A \) is the ring of homomorphisms \( A \to A \).
Note that \( \End A \) may be strictly smaller than the endomorphism ring of \( A_{\bar{k}} \).
The \defterm{endomorphism algebra} of \( A \) is \( \End A \otimes_\Z \Q \).
The endomorphism algebra is a semisimple algebra over \( \Q \) (whatever the base field of~\( A \)) and the endomorphism ring is an order in this algebra.
Any polarization of~\( A \) induces a positive involution, called the \defterm{Rosati involution}, of \( \End A \otimes_\Z \Q \).
If the polarization is principal, then the Rosati involution maps \( \End A \) into itself.

Let \( (A, \lambda) \) and \( (B, \mu) \) be principally polarized abelian varieties.
If \( f \colon A \to B \) is an isogeny, then we obtain a polarization \( f^* \mu \) on \( A \), given by \( f^\vee \circ \mu \circ f \), or equivalently, the polarization induced by the line bundle \( f^* \mathcal{M} \) on \( A \) if \( \mathcal{M} \) is a line bundle on \( B \) inducing \( \mu \).

We then say that \( f \) is a \defterm{polarized isogeny}, or that it is \defterm{compatible with the polarizations}, if
\[ f^* \mu = n.\lambda \text{ for some } n \in \Z. \]
Note that it would be too strict to require in this definition that
\[ f^* \mu = \lambda \]
because \( f^* \mu \) has degree \( (\deg f)^2 \), so this equality can only hold if \( \deg f = 1 \), that is, if \( f \) is an isomorphism.

One motivation for considering polarizations is that we can construct a moduli space \( \Ag \) of principally polarized abelian varieties of dimension \( g \), but not a moduli space of unpolarized abelian varieties.
This moduli space is an example of a Shimura variety.

The significance of polarized isogenies then lies in the fact that two principally polarized abelian varieties are related by a polarized isogeny if and only if the corresponding points of \( \Ag \) lie in the same Hecke orbit.
Hecke orbits are natural equivalence classes on Shimura varieties.

\pagebreak

\subsection{Polarized versus unpolarized isogeny classes}

The relation ``there exists a polarized isogeny from \( (A, \lambda) \) to \( (B, \mu) \)'' is an equivalence relation on polarized abelian varieties which is stronger than the existence of an isogeny from \( A \) to~\( B \) (forgetting the polarizations).
\Cref{isog-inf-hecke} gives an example in which the polarized isogeny class of a principally polarized abelian variety is strictly smaller than its unpolarized isogeny class (namely, an abelian surface with multiplication by a real quadratic field).

Our first main result (section~\ref{sec:fourth-powers}) shows that the existence of an unpolarized isogeny between two principally polarized abelian varieties \emph{does} imply the existence of a polarized isogeny between their fourth powers.
Thus some questions about abelian varieties in an isogeny class can be reduced to questions about abelian varieties in a polarized isogeny class (which may be more natural if one is looking at the moduli space of principally polarized abelian varieties), by replacing the original varieties by their fourth powers.

\begin{theorem} \label{isogeny-classes-become-polarized}
Let \( (A, \lambda) \) and \( (B, \mu) \) be principally polarized abelian varieties over the same base field.
If \( A \) and \( B \) are isogenous, then there is a polarized isogeny from \( (A, \lambda)^4 \) to \( (B, \mu)^4 \).
\end{theorem}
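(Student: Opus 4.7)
The plan is to translate the existence of a polarized isogeny between the fourth powers into the isometry of a pair of rank-$4$ Hermitian forms over the endomorphism algebra $D := \End A \otimes_\Z \Q$, and then to verify this isometry using the Albert classification together with the classification of Hermitian forms over division algebras. Fix any isogeny $g_0 \colon A \to B$, which exists by hypothesis. Comparing $g_0^*\mu$ with $\lambda$ via the identification $\Hom(A, A^\vee) \otimes_\Z \Q \cong D$ induced by $\lambda$ produces an element $c \in D$ that is symmetric with respect to the Rosati involution $*$ of $\lambda$ and positive definite for the associated Hermitian pairing. An arbitrary quasi-isogeny $f \colon A^4 \to B^4$ can then be written as $f = g_0^{\oplus 4} \circ M$ with $M \in \GL_4(D)$, and $f^*\mu^4$ corresponds (via $\lambda^4$) to the Hermitian matrix $M^*(c I_4) M \in \rM_4(D)$, where $M^*$ is the conjugate-transpose with respect to $*$. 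Consequently, a polarized isogeny $(A,\lambda)^4 \to (B,\mu)^4$ exists precisely when the rank-$4$ diagonal Hermitian forms $\langle c, c, c, c \rangle$ and $\langle n, n, n, n \rangle$ over $(D, *)$ are isometric for some $n \in \Q_{>0}$; clearing denominators in such an $M$ then produces the required honest isogeny.

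The next step is to reduce to the case where the algebra is a division algebra. The isogeny decomposition $A \sim \prod_i A_i^{k_i}$ into isotypic factors yields $D \cong \prod_i \rM_{k_i}(D_i)$ with each $D_i := \End A_i \otimes_\Z \Q$ a division algebra carrying an induced positive involution, and both $c$ and the Hermitian form split accordingly. Morita equivalence converts the rank-$4$ Hermitian form over $(\rM_{k_i}(D_i), *)$ into a rank-$4 k_i$ Hermitian form over $(D_i, *')$ for an appropriate involution $*'$. It therefore suffices to prove the following claim: for every division algebra $D_0$ with positive involution as in Albert's list, every positive symmetric $c \in D_0$, and every $r$ with $4 \mid r$, the rank-$r$ Hermitian form $\langle c, \ldots, c \rangle$ is isometric to $\langle 1, \ldots, 1 \rangle$ over $(D_0, *)$; taking $n = 1$ across all simple factors then reassembles the desired global isometry.

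Finally, I would establish the claim case by case using the classification of positive definite Hermitian forms in each of the four Albert types. In every case the isometry class of a form of given rank is determined by the signatures at the archimedean places of the centre (which agree for $\langle c, \ldots, c \rangle$ and $\langle 1, \ldots, 1 \rangle$ since both forms are positive definite), a determinant invariant (living in $F^\times / F^{\times 2}$ for Types~I--III, and in $F_0^\times / \Nm(K^\times)$ for Type~IV), and local Hasse-type invariants at the finite places. Because $c$ appears $r$ times with $4 \mid r$, the determinant equals the even power $c^r = (c^{r/2})^2$ and is therefore trivial modulo squares (and, a fortiori, modulo norms from a quadratic extension); and each pairwise Hilbert/Hasse symbol enters with exponent $\binom{r}{2}$, which is even because $\binom{4}{2} = 6$. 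The local-global classification theorems---Hasse--Minkowski for Type~I, Landherr for Type~IV, and the classification of Hermitian forms over quaternion algebras (as in Scharlau's book) for Types~II and~III---then force the isometry. The main obstacle is executing this invariant computation uniformly in the quaternion Types~II and~III, where the positive involution is orthogonal, respectively symplectic, and Hermitian forms behave less transparently; here one can either appeal directly to the quaternionic classification or transfer Hermitian forms over the quaternion algebra to quadratic forms over its totally real centre, in which case $4 \mid r$ again places one well within the stable range where the relevant Hasse--Minkowski invariants vanish.
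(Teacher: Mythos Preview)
Your overall strategy coincides with the paper's: translate the problem into an isometry of fourth powers of positive definite Hermitian forms over the simple factors of \( \End A \otimes_\Z \Q \), then verify this isometry case by case through the Albert classification. The reduction and the treatment of Types~I, III and~IV are essentially the paper's argument (the paper states it as \cref{fourth-power-hermitian-form} for two arbitrary positive definite forms \( \psi_1, \psi_2 \), which is what you actually need after Morita; your formulation in terms of a single repeated diagonal entry \( \langle c,\dotsc,c \rangle \) only covers the case \( k_i = 1 \), but diagonalizing \( \phi_i \) and applying your claim entrywise repairs this easily).

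The genuine gap is in Type~II. You invoke ``local--global classification theorems \dots\ for Types~II and~III'' and, as a fallback, propose transferring to quadratic forms over the totally real centre~\( F \). Neither route works as stated. For a totally indefinite quaternion algebra \( D/F \) with an orthogonal involution, \( (D,*) \)-Hermitian forms correspond to \( (D,\bar{\phantom{v}}) \)-skew-Hermitian forms, and for these the Hasse principle \emph{fails}: there exist globally non-isometric forms that are isometric at every place. Your invariant computation (determinant a square, signatures matching) only establishes that \( \psi_1^{\oplus 4} \) and \( \psi_2^{\oplus 4} \) lie in the same genus, i.e.\ that their difference lies in the kernel of the localization map \( r \colon W(D,*) \to \prod_\fp W(D_\fp,*) \); it does not force global isometry. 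The trace-form transfer does not help either: precisely because quadratic forms over \( F \) \emph{do} satisfy Hasse--Minkowski, any two Hermitian forms in the same genus have isometric trace forms, so the trace form cannot detect the failure of the Hasse principle. The paper closes this gap by invoking a result of Lewis (\cite{lewis:skew-hermitian-forms}, Proposition~3) showing that \( \ker r \cong (\Z/2\Z)^{s-2} \); in particular it has exponent~\( 2 \). One first checks (from the local classification, as you do) that already \( [\psi_1^{\oplus 2}] - [\psi_2^{\oplus 2}] \in \ker r \), and then doubling kills it. This is exactly why the exponent~\( 4 \), rather than~\( 2 \), is needed in the theorem, a point your sketch does not isolate.
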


In the proof of \cref{isogeny-classes-become-polarized}, we begin by reducing to the case in which \( A \) is \defterm{isotypic}, that is, isogenous to \( A_0^r \) for some simple abelian variety \( A \).
If \( D \) is the endomorphism algebra of \( A_0 \), then the polarizations \( \lambda \) and \( f^* \mu \) induce positive definite Hermitian forms \( \psi_1 \) and \( \psi_2 \) on \( D^r \).
In order to show that there is a polarized isogeny from \( (A, \lambda)^4 \) to \( (B, \mu)^4 \), we have to show that the direct sum of four copies of \( \psi_1 \) is isometric to a rational multiple of the direct sum of four copies of \( \psi_2 \).
In fact we prove that the previous sentence holds without the words ``a rational multiple of.''

\begin{theorem} \label{fourth-power-hermitian-form}
Let \( (D, *) \) be a division algebra over \( \Q \) with a positive involution.
Let \( V \) be a finite-dimensional right \( D \)-module and let
\[ \psi_1, \psi_2 \colon V \times V \to D \]
be two positive definite \( (D, *) \)-Hermitian forms.

Then \( \psi_1^{\oplus 4} \) and \( \psi_2^{\oplus 4} \) are isometric.
\end{theorem}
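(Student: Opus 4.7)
My plan is to reduce the claim to a local statement via a Hasse-type principle for Hermitian forms and then observe that the operation \( \psi \mapsto \psi^{\oplus 4} \) trivialises every local invariant beyond rank and archimedean signature. The structural reason is that at each non-archimedean place the invariants that classify positive definite Hermitian forms over \( (D,*) \) lie in groups of exponent \( 2 \), so taking four copies kills them.

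Concretely, the first step is to invoke the classical local-global principle for Hermitian forms over a division algebra with positive involution (due in various cases to Landherr, Jacobson, and Scharlau): two Hermitian forms of the same rank on \( V \) are isometric over \( (D,*) \) if and only if they become isometric over \( D \otimes_F F_v \) for every place \( v \) of the centre \( F \) of \( D \). This reduces the problem to showing that \( \psi_1^{\oplus 4} \) and \( \psi_2^{\oplus 4} \) are locally isometric at every \( v \).

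At each archimedean place of \( F \), the forms \( \psi_1 \) and \( \psi_2 \) are positive definite of the same rank, and positive definite Hermitian forms over the local division algebra (always one of \( \R \), \( \C \), or \( \HH \) with its standard involution) are unique up to isometry at a given rank; so \( \psi_1 \) and \( \psi_2 \) are already locally isometric at infinity. At each non-archimedean place, the local classification adds only a discriminant in an exponent-\( 2 \) quotient group together with, in some cases, a further Hasse-type invariant in \( \{\pm 1\} \). Using the standard orthogonal-sum formulas (written here in the Type~I guise, and analogous in the other types)
\begin{equation*}
\text{disc}(\varphi \perp \varphi) = \text{disc}(\varphi)^2, \qquad c(\varphi \perp \varphi) = (\text{disc}\,\varphi,\; \text{disc}\,\varphi),
\end{equation*}
where \( (\cdot,\cdot) \) is the Hilbert symbol and \( c \) the Hasse invariant, one finds that the discriminant of \( \psi^{\oplus 4} \) equals \( \text{disc}(\psi)^4 \), a square and hence trivial, while \( c(\psi^{\oplus 4}) = (\text{disc}(\psi)^2, \text{disc}(\psi)^2) = 1 \). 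Thus all local invariants of \( \psi_i^{\oplus 4} \) are forced, and \( \psi_1^{\oplus 4} \) and \( \psi_2^{\oplus 4} \) agree locally everywhere.

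The main obstacle is to carry out the above local classification and sum calculation uniformly across the four cases of the Albert classification of \( (D,*) \): totally real field with trivial involution (Type~I), indefinite quaternion algebra with orthogonal involution (Type~II), definite quaternion algebra with standard involution (Type~III), and CM-type division algebra (Type~IV). Types~II and~III are the most delicate, because one usually passes through Morita equivalence to symmetric or skew-Hermitian forms over a smaller algebra, and it must be checked that \( \psi \mapsto \psi^{\oplus 4} \) continues to kill torsion after that passage. In every case, however, the relevant local obstruction groups are \( 2 \)-torsion, so taking fourth sums is enough.
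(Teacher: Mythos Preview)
Your overall strategy --- Albert classification followed by a local analysis of invariants --- is the same as the paper's, and your handling of Types~I, III and~IV is essentially correct. However, there is a genuine gap in Type~II.

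The local-global principle you invoke as your first step does \emph{not} hold for Type~II (totally indefinite quaternion algebras over a totally real field, with an orthogonal involution). The localisation map on Witt groups
\[ r \colon W(D,*) \to \prod_{\mathfrak{p}} W(D_{\mathfrak{p}}, *) \]
has nontrivial kernel whenever the number \( s \) of places at which \( D \) is non-split exceeds~\( 2 \). Hence showing that \( \psi_1^{\oplus 4} \) and \( \psi_2^{\oplus 4} \) are locally isometric at every place is not sufficient to conclude that they are globally isometric.

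The paper closes this gap as follows. Your local calculation already shows that \( \psi_1^{\oplus 2} \) and \( \psi_2^{\oplus 2} \) are locally isometric everywhere (at non-archimedean places the only invariant beyond dimension is a determinant in \( F^\times/F^{\times 2} \), killed by squaring; at archimedean places positivity settles it). Thus \( [\psi_1^{\oplus 2}] - [\psi_2^{\oplus 2}] \in \ker r \). The missing ingredient is a theorem of Lewis stating that \( \ker r \cong (\Z/2\Z)^{s-2} \); in particular \( \ker r \) has exponent~\( 2 \). Doubling once more gives
\[ [\psi_1^{\oplus 4}] - [\psi_2^{\oplus 4}] = 2\bigl([\psi_1^{\oplus 2}] - [\psi_2^{\oplus 2}]\bigr) = 0 \text{ in } W(D,*), \]
and since the two forms have the same dimension they are isometric.

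This is the structural reason why four copies are genuinely required: one doubling trivialises the local invariants, and a second doubling is needed to kill the \emph{global} obstruction living in \( \ker r \). Your closing sentence that ``the relevant local obstruction groups are \( 2 \)-torsion'' is true but does not address this global obstruction, which also happens to be \( 2 \)-torsion but requires Lewis's result as a separate input.
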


We prove \cref{fourth-power-hermitian-form} by breaking it into cases using the Albert classification of division algebras with positive involution, then using the classification of \( (D, *) \)-Hermitian forms from chapter~10 of~\cite{scharlau:quadratic-forms} in each case.
For division algebras of Albert types I, III and IV, isometry of Hermitian forms satisfies a local-global principle so this classification is straightforward.
For division algebras of type II, the isometry class of a Hermitian form is not determined by its localizations, so we must use a result of Lewis~\cite{lewis:skew-hermitian-forms} describing the obstruction to the local-global principle.

\subsection{A degree bound for polarized isogenies}

Our second main theorem (section~\ref{sec:degree-bound}) asserts that, if we fix a principally polarized abelian variety \( (A, \lambda) \) and consider any other principally polarized abelian variety \( (B, \mu) \) in the same polarized isogeny class such that we know the degree \( n \) of an unpolarized isogeny \( A \to B \), then \( A \) and \( B \) are related by a polarized isogeny whose degree is bounded by a polynomial in~\( n \).

\begin{theorem} \label{polarized-isogeny-bound}
Let \( (A, \lambda) \) be a principally polarized abelian variety over a field \( K \).
There exist constants \( c \) and \( k \), depending on \( (A, \lambda) \), such that:

If \( (B, \mu) \) is a principally polarized abelian variety over \( K \) for which
\begin{enumerate}
\item there exists a polarized isogeny \( f \colon A \to B \) (of any degree), and
\item there exists an isogeny \( g \colon A \to B \) of degree \( n \) (not necessarily polarized),
\end{enumerate}
then there exists a polarized isogeny \( h \colon A \to B \) of degree at most \( cn^k \).

The constant \( k \) can be chosen to be \( 4 \dim A \), while the constant \( c \) depends on the endomorphism ring of \( A \).
\end{theorem}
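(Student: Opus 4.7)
The plan is to translate the problem into finding a small element of $\End(A)$ satisfying a Hermitian quadratic condition, then to construct such an element using the structure of the endomorphism algebra.

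\textbf{Setup.} Using $f$ to identify $\Hom(A, B) \otimes \Q$ with $F := \End(A) \otimes \Q$, write any isogeny as $h = f\delta$ with $\delta \in F$. Writing $f^*\mu = n_f\lambda$ with $n_f \in \Z_{>0}$, and letting $\dagger$ denote the Rosati involution from $\lambda$, a direct computation gives $(f\delta)^*\mu = n_f\lambda(\delta^\dagger\delta)$; hence $h$ is polarized iff $n_f\delta^\dagger\delta \in \Z_{>0}$. Set $\alpha := f^{-1}g \in F^\times$; then $\phi := n_f\alpha^\dagger\alpha = \lambda^{-1}g^*\mu$ lies in $\End(A)$, is symmetric and positive under $\dagger$, with $\deg(\phi) = n^2$. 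Substituting $\delta = \alpha\beta$, the problem becomes: find $\beta \in \End(A)$ with $\beta^\dagger\phi\beta \in \Z_{>0}$ and $\deg(\beta)$ polynomial in $n$; the desired polarized isogeny is then $h := g\beta$, of degree $n \cdot \deg(\beta)$.

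\textbf{Candidate construction.} The guiding idea is to seek $\beta$ on the affine line $\alpha^{-1}Z(F) \subset F$: writing $\beta = \alpha^{-1}c$ with $c \in Z(F)$, one computes
\[
\beta^\dagger\phi\beta = n_f \,(\alpha\beta)^\dagger(\alpha\beta) = n_f\cdot c^\dagger c \in Z(F)^+,
\]
so the polarization condition reduces to the quadratic condition $n_f c^\dagger c \in \Z_{>0}$ on an element $c$ of the center. The hypothesis that $f$ exists furnishes the trivial solution $c = 1$, $\beta = \alpha^{-1}$ (corresponding to $h = f$, of uncontrolled degree); one instead seeks a small nonzero $c$ such that the resulting $\beta$ lies in $\End(A)$. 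After reducing to the isotypic case $A \sim A_0^r$ (so that $F = M_r(D)$ with $D$ a division algebra of Albert type, following the strategy of \cref{isogeny-classes-become-polarized}), one applies Minkowski's theorem to the lattice $\alpha\End(A) \cap Z(F)$ intersected with the affine quadric $\{c \in Z(F) : c^\dagger c \in \Q\}$, whose covolume and rational-point structure are both controlled in terms of the coefficients of $\phi$, which are polynomial in $n$. Since $\deg(\beta)$ is bounded by the $(2\dim A)$-th power of any Euclidean norm on $\End(A)$, this yields the final bound $\deg(h) \leq cn^{4\dim A}$.

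\textbf{Main obstacle.} The principal difficulty is to carry out the Minkowski-type argument uniformly across the Albert types. In types I and IV, where the center is commutative (a totally real or CM field), the problem reduces to finding small solutions of a quadratic equation in a number field, which is essentially classical. In types II and III the non-commutativity of $D$ enters, and in particular in type II the local--global principle for $(D,*)$-Hermitian forms fails; overcoming this obstruction will likely use the invariants of Lewis~\cite{lewis:skew-hermitian-forms}, just as in the proof of \cref{fourth-power-hermitian-form}. A secondary difficulty is ensuring that the small $c$ produced by the lattice-point argument lies on the correct irreducible component of the quadric $\{c^\dagger c \in \Q\}$; this should follow from the existence of the rational point $c = 1$ via standard parametrization of quadrics through a rational point.
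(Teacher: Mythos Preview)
Your reduction to the endomorphism problem is correct and matches the paper: with $q := \lambda^{-1}g^*\mu \in \End A$ (your $\phi$) and $a := g^{-1}f$ (your $\alpha^{-1}$), one has $a^\dag q a \in \Q^\times$, and the task is to find $b \in \End A$ with $b^\dag q b \in \Z\setminus\{0\}$ and $\deg b$ polynomial in $n$; then $h = g b$ works.

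However, your candidate construction has a genuine gap. You look for $b$ of the special form $\alpha^{-1}c$ with $c \in Z(E)$, reducing to $c^\dag c \in \Q$. When the involution is trivial on the centre (Albert types I, II, III), this forces $c^2 \in \Q$, hence $c$ lies in a subfield of $Z(E)$ of degree at most $2$ over $\Q$. If $Z(E)$ is, say, a totally real cubic field (an abelian threefold with real multiplication gives such an example), the only solutions are $c \in \Q$, so $b \in \Q\,\alpha^{-1}$ and $h$ is a scalar multiple of $f$, whose degree is controlled by $\deg f$, not by $n$. Thus the ansatz $b \in \alpha^{-1}Z(E)$ is simply too small a search space; one must allow $b$ to range over all of $\End A$. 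Relatedly, Minkowski's theorem does not directly give small lattice points on a quadric, and the parametrization-through-a-rational-point trick you allude to does not obviously produce a lattice whose covolume is bounded in terms of $n$ alone.

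There is a second issue: you propose to reduce to the isotypic case ``following the strategy of \cref{isogeny-classes-become-polarized}''. The paper explicitly notes that this reduction is problematic here, because a polarized isogeny on a product requires the \emph{same} integer $n$ in $f^*\mu = n\lambda$ on each isotypic factor, and combining factor-by-factor solutions does not guarantee this. The paper therefore avoids Hermitian forms over simple factors and works directly with symmetric elements $q$ in the full semisimple algebra.

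Finally, your diagnosis of the ``main obstacle'' is off target. The failure of the Hasse principle in type~II (and Lewis's invariants) is a rational, global phenomenon relevant to \cref{fourth-power-hermitian-form}; it gives no integral or quantitative control. The paper's proof of \cref{arith-bound-global} is instead a local--global argument: at each prime $p$ one uses the $p$-adic polar decomposition of Benoist--Oh to find a local $b_p \in R_p$ with $b_p^\dag q b_p \in \Z_p\setminus\{0\}$ and bounded norm, one uses the classification of \emph{maximal lattices} for Hermitian forms over local fields to show the local constant is $1$ at almost all $p$, and one glues the $b_p$ to a global $b$ via finiteness of the adelic class set of the quasi-unitary group of $(E,\dag)$.
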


Note that it is obvious, under conditions (1) and (2), that there exists a polarized isogeny \( h \colon A \to B \) whose degree is bounded by some function \( C(A, \lambda, n) \).
This is because there are only finitely many abelian varieties related to \( A \) by an isogeny of degree~\( n \), and each of them has only finitely many principal polarizations (up to polarized isomorphism of principally polarized abelian varieties), by \cite{milne:abelian-varieties-old} Theorem~18.1.
The content of \cref{polarized-isogeny-bound} is that this bound is polynomial in \( n \).

\Cref{polarized-isogeny-bound} is particularly useful in combination with the Masser--Wüstholz isogeny theorem.
If we fix an abelian variety \( A \) over a number field and consider abelian varieties \( B \) over larger number fields such that \( A_{\bar{K}} \) is isogenous to \( B_{\bar{K}} \),
the Masser--Wüstholz theorem asserts that there exists an isogeny \( A_{\bar{K}} \to B_{\bar{K}} \) whose degree is bounded by a polynomial in the degree of the field of definition of \( B \).
The relevant part of the Masser--Wüstholz theorem is as follows.

\begin{theorem}[\cite{mw:isogeny-avs}] \label{mw:isogeny-avs}
Let \( K \) be a number field and \( A \) a principally polarized abelian variety over \( K \).
There exist constants \( c \) and \( \kappa \), depending on \( A \) and \( K \), such that:

If \( B \) is any principally polarized abelian variety over a finite extension \( L \) of \( K \) such that \( A_{\bar{K}} \) is isogenous to \( B_{\bar{K}} \),
then there exists an isogeny \( A_{\bar{K}} \to B_{\bar{K}} \) of degree at most
\[ c(A, K) [L:K]^\kappa. \]
\end{theorem}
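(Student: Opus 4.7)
The final theorem is cited directly from Masser--Wüstholz \cite{mw:isogeny-avs}, so a proof proposal amounts to recalling their strategy rather than constructing one from scratch. First, I would reduce the isogeny problem to the production of a controlled abelian subvariety of $A \times B$. By Poincaré's complete reducibility, an isogeny $A_{\bar K} \to B_{\bar K}$ corresponds to an abelian subvariety $X \subset A_{\bar K} \times B_{\bar K}$ whose two projections to $A$ and to $B$ are themselves isogenies; the degree of the resulting composite isogeny is controlled by the degree of $X$ in a fixed projective embedding coming from the polarization on $A \times B$. It therefore suffices to produce such an $X$ of degree polynomially bounded in $[L:K]$.

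Next, one invokes the quantitative transcendence apparatus. The abelian subvariety $X$ corresponds to a sub-Lie-algebra of $\mathrm{Lie}(A_{\bar K} \times B_{\bar K})$ that is stable under the period lattice and defined over $\bar K$. The Masser--Wüstholz period theorem provides a lower bound on the Faltings height (equivalently, the minimal degree in a chosen projective embedding) of such a sub-abelian variety in terms of the degree of its field of definition, by combining Wüstholz's analytic subgroup theorem with quantitative zero estimates for polynomials on commutative algebraic groups. Applied to $A_{\bar K} \times B_{\bar K}$, this pinpoints an $X$ of the required small degree.

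The main obstacle, and the bulk of \cite{mw:isogeny-avs}, is making the dependence on $[L:K]$ explicitly polynomial with an exponent $\kappa$ depending only on $\dim A$. This rests on a delicate sequence of height inequalities for sub-abelian varieties, an induction over chains of such subvarieties, and careful bookkeeping of the transcendence constants in Wüstholz's theorem. Once the period estimate is in hand, the reduction in the first paragraph yields an isogeny of degree at most $c(A,K)\, [L:K]^\kappa$, as stated. In the paper under discussion, this deep result is used as a black box and combined with \cref{polarized-isogeny-bound} to pass from unpolarized to polarized isogeny degree bounds in arithmetic applications on Shimura varieties.
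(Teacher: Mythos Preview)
Your observation is correct: the paper does not prove this theorem at all. It is stated with an attribution to \cite{mw:isogeny-avs} and then used as a black box, exactly as you say in your final sentence. There is therefore no ``paper's own proof'' to compare your sketch against.

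Your outline of the Masser--W\"ustholz strategy (reduction to finding an abelian subvariety of $A\times B$ of small degree, then the period/subgroup theorem and quantitative transcendence to bound that degree) is a fair high-level summary of their method. Since the present paper neither reproduces nor sketches that argument, your proposal goes beyond what the paper contains; for the purposes of this paper, simply citing the result is all that is needed.
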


The isogeny of bounded degree whose existence is asserted by \cref{mw:isogeny-avs} is not necessarily a polarized isogeny, even if we know initially that \( A_{\bar{K}} \) and \( B_{\bar{K}} \) are in the same polarized isogeny class.
Combining \cref{polarized-isogeny-bound,mw:isogeny-avs} establishes that, in the setting of \cref{mw:isogeny-avs}, if \( (B_{\bar{K}}, \mu) \) is in the \emph{polarized} isogeny class of \( (A_{\bar{K}}, \lambda) \), then there exists a \emph{polarized} isogeny \( A_{\bar{K}} \to B_{\bar{K}} \) satisfying a bound of the same form as \cref{mw:isogeny-avs}.

\subsection{Proof of the degree bound}

The proof of \cref{polarized-isogeny-bound} is easily reduced to a result about endomorphisms of an abelian variety.
We begin by outlining the idea of this proof in the case of an isotypic abelian variety.
As mentioned above, in the isotypic case the polarizations \( \lambda \) and \( g^* \mu \) of \( A \) induce positive definite Hermitian forms \( \psi_1 \) and \( \psi_2 \) on \( D^r \) for a suitable division algebra \( D \) and \( r \in \N \).
The degree~\( n \) of~\( g \) controls the determinants of these forms on a suitable lattice \( \Lambda \) in \( D^r \).
The existence of a polarized isogeny \( f \) implies that \( \psi_2 \) is isometric to a rational scalar multiple of~\( \psi_1 \).
In order to prove the existence of a polarized isogeny \( h \) of bounded degree, we have to show that there is an isometry from \( \psi_2 \) to a rational scalar multiple of \( \psi_1 \) which maps \( \Lambda \) into itself and whose determinant is bounded by a polynomial in \( n \).

However, it seems difficult to reduce the non-isotypic case of \cref{polarized-isogeny-bound} to the isotypic case.
The problem is that if
\[ f_1 \colon (A_1, \lambda_1) \to (B_1, \mu_1),  \;  f_2 \colon (A_2, \lambda_2) \to (B_2, \mu_2) \]
are polarized isogenies, then we only know that
\[ (f_1, f_2)^* (\mu_1, \mu_2) = (n_1 \lambda_1, n_2 \lambda_2) \]
for some integers \( n_1 \) and \( n_2 \), but \( n_1 \) might not be equal to \( n_2 \), and so \( (f_1, f_2) \) might not be a polarized isogeny.

We have found it more convenient to write most of the argument using a symmetric element \( q \in E \) (where \( E \) is a semisimple algebra) instead of Hermitian forms over the simple factors of \( E \).
In the following proposition, the hypothesis that there exists \( a \) such that \( a^\dag q a \in \Q^\times \) corresponds to condition (1) in \cref{polarized-isogeny-bound} while \( \Nm_E(q) \) is related to the degree of \( g \) as appears in condition (2) of \cref{polarized-isogeny-bound}.

\begin{proposition} \label{arith-bound-global}
Let \( (E, \dag) \) be a semisimple \( \Q \)-algebra with involution, let \( R \subset E \) be a \( \dag \)-stable order, and let \( \Nm_E \) be a \( \dag \)-compatible norm on \( E \) of rank~\( d \).

There exists a constant \( c \) depending only on \( (R, \dag, \Nm_E) \) such that:

For every \( q \in R \), if there exists \( a \in E \) such that \( a^\dag q a \in \Q^\times \),
then there exists \( b \in R \) such that
\[ b^\dag q b \in \Z - \{ 0 \} \text{ and } \Nm_E(b) \leq c\Nm_E(q)^{d-1/2}. \]
\end{proposition}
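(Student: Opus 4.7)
The plan is to rephrase the task as finding a short element of a given orbit of the similitude group of the ``trivial'' form on $E$, and to obtain it by a geometry of numbers argument on the $\Z$-lattice $R$.

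First, I would translate the hypothesis into a more usable form. Using the given $a$, choose a multiple so that $a \in R$ and $\alpha := a^\dagger q a \in \Z \setminus \{0\}$. Multiplicativity and $\dagger$-compatibility of $\Nm_E$ yield the key numerical identity
\[ |\alpha|^d = \Nm_E(\alpha \cdot 1) = \Nm_E(a^\dagger q a) = \Nm_E(a)^2 \Nm_E(q), \]
so that the desired bound $\Nm_E(b) \leq c \Nm_E(q)^{d-1/2}$ is equivalent to a bound of the shape $|b^\dagger q b| \leq c' \Nm_E(q)^2$. Moreover, since $a$ is forced to be invertible and $q = a^{-\dagger} \alpha a^{-1}$, a direct computation $(ag)^\dagger q (ag) = \alpha \, g^\dagger g$ shows that the set of all $b \in E^\times$ satisfying $b^\dagger q b \in \Q^\times$ is exactly the right coset $a \cdot \gG(\Q)$, where $\gG$ is the $\Q$-algebraic similitude group
\[ \gG = \{g \in E^\times : g^\dagger g \in \G_m \cdot 1\}. \]
The problem thus becomes: find $g \in \gG(\Q)$ with $ag \in R$ and $\Nm_E(ag)$ bounded polynomially in $\Nm_E(q)$.

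Second, I would apply the geometry of numbers to the lattice $R$. Equip $R$ with the positive definite quadratic form $b \mapsto \Tr_{E/\Q}(b^\dagger q b)$ — positivity uses that $\dagger$ is a positive involution — whose Gram determinant is a fixed multiple of $\Nm_E(q)^2$. Minkowski's theorem on successive minima then produces many short lattice vectors $b_i \in R$, whose values $\phi(b_i) := b_i^\dagger q b_i \in E^+$ are confined to a bounded region. Among these short vectors I would run a pigeonhole argument: the non-scalar components of the $\phi(b_i)$ lie in the fixed lattice $R \cap E^+$ intersected with a bounded region of $E^+/\Q$, so if there are enough short lattice points then two must produce congruent non-scalar components. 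A suitable rational-linear combination of such $b_i, b_j$, adjusted by an element of $a \gG(\Q)$ coming from the hypothesis, will then satisfy $b^\dagger q b \in \Q \cdot 1$. Clearing a bounded denominator produces the required $b \in R$ with $b^\dagger q b \in \Z \setminus \{0\}$.

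The main obstacle is this final pigeonhole/adjustment step and its quantitative balance: the precise relationship between the number of short lattice points available, the size of the lattice in the non-scalar part of $E^+$, and the norm gain from adjusting by an element of $\gG(\Q)$ must conspire to produce exactly the exponent $d - \tfrac{1}{2}$. Getting this balance right requires combining the key identity from the first step with a careful analysis of the similitude group action on $E$; I expect this to be where most of the technical work lies.
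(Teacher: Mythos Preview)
Your approach is entirely different from the paper's, and it has a genuine gap at exactly the point you flag as the ``main obstacle.''

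The paper does not use geometry of numbers at all.  Instead it proves a local analogue at every prime~$p$ (with constant~$c_p$) via the $p$-adic polar decomposition of Benoist--Oh, then shows separately that $c_p=1$ at all primes where $E_p$ is split, unramified, $R_p$ maximal, and $p\neq 2$, using the theory of maximal lattices for Hermitian forms.  These local solutions $b_p$ are glued by adelic reduction theory: the elements $u_p=b_p^{-1}a$ assemble into a point of $\gU(\A_f)$ for the quasi-unitary group $\gU=\{u:uu^\dag\in\G_m\}$, and finiteness of the double coset space $\prod_p\gU(\Z_p)\backslash\gU(\A_f)/\gU(\Q)$ produces the global~$b$.

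Your proposal, by contrast, breaks down at the pigeonhole step.  You correctly observe that the solution set of $b^\dag qb\in\Q^\times$ is the coset $a\cdot\gG(\Q)$, which is a nonlinear subvariety of~$E$.  Minkowski's theorem gives you many short vectors $b_i\in R$, but it gives no reason for any of them to lie in $a\cdot\gG(\Q)$.  Finding two short $b_i,b_j$ with $\phi(b_i)\equiv\phi(b_j)\pmod{\Q}$ does not help: $\phi(b)=b^\dag qb$ is quadratic, so no linear combination of $b_i,b_j$ has $\phi$-value in~$\Q$ in general, and the vague ``adjustment by an element of $a\gG(\Q)$'' has no content unless you already have a bounded element of $R\cap a\gG(\Q)$, which is the whole problem.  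There is also a smaller issue: the proposition as stated does not assume $\dag$ is positive or that $q$ is positive, so the form $b\mapsto\Tr(b^\dag qb)$ need not be definite; you are silently importing hypotheses from the application to abelian varieties.
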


The proof of \cref{arith-bound-global} uses a local-to-global approach.
First we prove that the proposition itself holds for the localization of the algebra \( E \) at each rational prime~\( p \), with a constant~\( c_p \) depending on \( p \).
This part of the proof uses Benoist and Oh's \( p \)-adic polar decomposition~\cite{benoist-oh:polar}.
We then give an independent proof that for all but finitely many~\( p \), we can take \( c_p = 1 \).
In the latter part of the proof, we use Hermitian forms as sketched above for the isotypic case, in particular the integral classification of Hermitian forms over local fields and Shimura's results on maximal lattices.
We then use reduction theory for the adelic points of the multiplicative group of \( E \) to obtain a global result from these local results.

\subsection{Application to the André--Pink conjecture}

The results in this paper on the relationship between isogeny classes and polarized isogeny classes are related to the André--Pink conjecture.
In particular, the results of this paper provide an alternative approach to some parts of the author's recent work on the André--Pink conjecture~\cite{orr:andre-pink}.

The André--Pink conjecture for \( \Ag \) is stated in \cref{andre-pink-ag} below (the definition of ``special subvariety'' is not important for the present discussion).
A key issue in studying this conjecture is the relationship between \cref{andre-pink-ag} and the \textit{a priori} slightly stronger \cref{andre-pink-ag-isog}.
Viewed from the perspective of Shimura varieties, the natural statement is \cref{andre-pink-ag} -- a generalization to arbitrary mixed Shimura varieties can be found at \cite{pink:conj} Conjecture~1.6. 
On the other hand, viewed from the perspective of abelian varieties, \cref{andre-pink-ag-isog} appears more natural.

\begin{conjecture}[André--Pink] \label{andre-pink-ag}
Let \( Z \) be an irreducible algebraic subvariety of the moduli space \( \Ag \) of principally polarized abelian varieties of dimension~\( g \) over~\( \C \).

If there exists a polarized isogeny class \( \Sigma \) in \( \Ag \)
such that \( \Sigma \cap Z \) is Zariski dense in~\( Z \), then \( Z \) is a special subvariety of \( \Ag \).
\end{conjecture}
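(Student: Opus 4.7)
The plan is to attempt \Cref{andre-pink-ag} by the Pila--Zannier strategy that has become standard for unlikely-intersection problems on Shimura varieties. I emphasize at the outset that this is a well-known open conjecture, so what follows is a framework rather than a complete argument, with the paper's theorems entering precisely at the step that converts polarized isogeny data into arithmetic estimates.

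First, I would pass to the Siegel upper half-space via the uniformization \( \pi \colon \HH_g \to \Ag = \SpZ \bs \HH_g \), fix an analytic component \( \Ztil \) of \( \pi^{-1}(Z) \), and choose a period point \( \tau_0 \) lifting some base \( (A_0, \lambda_0) \in \Sigma \). Since a polarized isogeny class is a single \( \GSpQ^+ \)-orbit, the problem reduces to counting \( g \in \GSpQ^+ \) of controlled complexity with \( g \cdot \tau_0 \in \Ztil \), modulo the action of \( \SpZ \). The paper's \Cref{polarized-isogeny-bound} combined with the Masser--Wüstholz theorem (\Cref{mw:isogeny-avs}) then bounds this complexity: any \( (B, \mu) \in \Sigma \) defined over a number field \( L \) admits a polarized isogeny from \( (A_0, \lambda_0) \) of degree polynomial in \( [L : K] \), and after rescaling one obtains a representative \( g \in \GSpQ^+ \) of height polynomial in \( [L : K] \).

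Next, I would run the Pila--Wilkie counting theorem in the o-minimal structure \( \mathbb{R}_{\mathrm{an}, \exp} \) on a suitable definable fundamental-domain version of \( \pi^{-1}(Z) \cap \GSpQ^+ \cdot \tau_0 \). If \( Z \) contains no positive-dimensional weakly special subvariety, Pila--Wilkie yields, for every \( \eps > 0 \), an upper bound \( O_\eps(H^\eps) \) on the number of such \( g \) of height at most \( H \), while the polynomial degree bound above supplies a matching Galois-orbit lower bound growing at least like a positive power of \( H \). The two estimates are then incompatible, so \( Z \) must contain a positive-dimensional weakly special subvariety through a Zariski-dense set of points of \( \Sigma \cap Z \). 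Applying an Ax--Schanuel theorem for \( \Ag \) and inducting on dimension, one reduces to the case where \( Z \) itself is special.

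The principal obstacle is the Galois-orbit lower bound: outside isotypic cases and small \( g \), the requisite polynomial bound on \( [L : K] \) in terms of the isogeny complexity is conditional on GRH or outright unknown, and it is exactly this gap that keeps \Cref{andre-pink-ag} open in general. The paper's reduction from polarized to unpolarized isogenies is genuinely useful here because it routes the question back into the regime where Masser--Wüstholz applies unconditionally; but extracting the Galois lower bound itself from the isogeny degree remains the essential transcendence-theoretic difficulty.
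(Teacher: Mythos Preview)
The statement you were asked to prove is labeled \texttt{conjecture} in the paper, not \texttt{theorem}: the paper does not contain a proof of \Cref{andre-pink-ag}, and indeed explicitly presents it as open. So there is no ``paper's own proof'' to compare your proposal against. What the paper does claim is that its \Cref{polarized-isogeny-bound} and \Cref{isogeny-classes-become-polarized} give an alternative route to the already-known curve case (\Cref{andre-pink-curve-in-ag}), and your outline is broadly consistent with that sketch.

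You are right to flag the Galois-orbit lower bound as the genuine obstruction, and your framework is the standard Pila--Zannier one. But note that even as a framework your write-up has a real gap beyond the lower bound: you assert that after Pila--Wilkie one deduces that \( Z \) contains a positive-dimensional weakly special subvariety, and then ``applying an Ax--Schanuel theorem for \( \Ag \) and inducting on dimension, one reduces to the case where \( Z \) itself is special.'' This induction step is not automatic. Pila--Wilkie gives algebraic blocks in the uniformizing space, and passing from those to weakly special subvarieties of \( Z \) through points of \( \Sigma \) requires the hyperbolic Ax--Lindemann theorem (or Ax--Schanuel), which is available; but the induction on dimension that then closes the argument is exactly what is \emph{not} known in general---one must control families of weakly special subvarieties and show that their union is either not Zariski dense or forces \( Z \) to be special. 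This is precisely the part of \cite{orr:andre-pink} that only works for curves (where there is no room for a positive-dimensional proper weakly special), and it is why \Cref{andre-pink-curve-in-ag} is stated for curves and not for arbitrary \( Z \). So your proposal conflates two separate difficulties: the arithmetic lower bound (which you identify) and the geometric induction (which you pass over).
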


\begin{conjecture} \label{andre-pink-ag-isog}
\Cref{andre-pink-ag} holds with ``a polarized isogeny class \( \Sigma \)'' replaced by ``an isogeny class \( \Sigma \).''
\end{conjecture}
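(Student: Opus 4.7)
The plan is to deduce \cref{andre-pink-ag-isog} from \cref{andre-pink-ag} itself, by passing to fourth powers via \cref{isogeny-classes-become-polarized}; since \cref{andre-pink-ag} is open, no unconditional approach is available. Suppose then that \cref{andre-pink-ag} holds, and let \( Z \subset \Ag \) be an irreducible subvariety together with an isogeny class \( \Sigma \subset \Ag \) such that \( \Sigma \cap Z \) is Zariski dense in~\( Z \).

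First I would introduce the fourth-power morphism \( \phi \colon \Ag \to \cA_{4g} \), defined on points by \( [(A, \lambda)] \mapsto [(A, \lambda)^4] \). This is a morphism of Shimura varieties, arising from the diagonal embedding of Shimura data \( (\GSp_{2g}, \Hg) \hookrightarrow (\GSp_{8g}, \cH_{4g}) \) given by \( M \mapsto \diag(M, M, M, M) \). In particular \( \phi \) is finite, and its image \( \phi(\Ag) \) is itself a special subvariety of \( \cA_{4g} \).

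Next, \cref{isogeny-classes-become-polarized} asserts that any two isogenous principally polarized abelian varieties have fourth powers related by a polarized isogeny; hence \( \phi(\Sigma) \) is contained in a single polarized isogeny class \( \Sigma' \subset \cA_{4g} \). Let \( Z' \) denote the Zariski closure of \( \phi(Z) \); since \( Z \) is irreducible and \( \phi \) is a morphism, \( Z' \) is irreducible, and \( \Sigma' \cap Z' \) contains \( \phi(\Sigma \cap Z) \) and is therefore Zariski dense in \( Z' \). Applying \cref{andre-pink-ag} to the pair \( (Z', \Sigma') \) yields that \( Z' \) is a special subvariety of \( \cA_{4g} \).

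The hardest part will be the final descent: deducing that \( Z \) itself is special in \( \Ag \) from the fact that \( \phi(Z) \) is special in \( \cA_{4g} \). Here I would invoke the standard principle that the preimage of a special subvariety under a morphism of Shimura varieties is a finite union of special subvarieties. Applied to the finite morphism \( \phi \), this shows that \( \phi^{-1}(Z') \) is a finite union of special subvarieties of \( \Ag \); since \( Z \) is an irreducible component of \( \phi^{-1}(Z') \), it follows that \( Z \) is special in \( \Ag \), as required.
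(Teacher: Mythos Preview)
The statement you are asked to prove is labelled a \emph{Conjecture} in the paper, and the paper does not give a proof of it.  What the paper does contain is exactly the idea you use: at the end of the introduction it remarks that \cref{isogeny-classes-become-polarized} ``allows us to deduce \cref{andre-pink-ag-isog} for a curve in \( \Ag \) from \cref{andre-pink-ag} for a curve in \( \cA_{4g} \).''  Your argument fleshes out this one-line remark and carries it through for arbitrary~\( Z \), not just curves, yielding the conditional implication
\[
\bigl(\text{\cref{andre-pink-ag} for } \cA_{4g}\bigr) \;\Longrightarrow\; \bigl(\text{\cref{andre-pink-ag-isog} for } \Ag\bigr).
\]
So your approach is precisely the one the paper indicates, and the mechanics --- the fourth-power morphism \( \phi \) arising from the diagonal \( \GSp_{2g} \hookrightarrow \GSp_{8g} \), the use of \cref{isogeny-classes-become-polarized} to place \( \phi(\Sigma) \) inside a single polarized isogeny class, and the descent via the fact that preimages of special subvarieties under Shimura morphisms are special --- are all sound.

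The only caveat is that this is a reduction between two open conjectures, not a proof of \cref{andre-pink-ag-isog}; you acknowledge this explicitly, and the paper claims no more.
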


In \cite{orr:andre-pink}, the author proved some cases of \cref{andre-pink-ag-isog}.
In particular, this includes the case when \( Z \) is a curve, and some partial progress on other cases.

\begin{theorem} (\cite{orr:andre-pink} Theorem~1.2) \label{andre-pink-curve-in-ag}
\cref{andre-pink-ag-isog} holds when \( Z \) is a curve.
\end{theorem}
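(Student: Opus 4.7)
The plan is to combine the two main results of this paper, \cref{isogeny-classes-become-polarized} and \cref{polarized-isogeny-bound}, with a Pila--Zannier style argument to bridge the gap between the unpolarized version (\cref{andre-pink-ag-isog}) and the polarized version (\cref{andre-pink-ag}) of the André--Pink conjecture for curves.

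First I would reduce to the polarized setting at the cost of passing to fourth powers. Fix a basepoint $(A_0, \lambda_0) \in \Sigma \cap Z$ and consider the finite morphism $\phi \colon \Ag \to \mathcal{A}_{4g}$ induced by $(X, \xi) \mapsto (X, \xi)^4$. By \cref{isogeny-classes-become-polarized}, the image $\phi(\Sigma)$ is contained in the single polarized isogeny class $\Sigma'$ of $\phi(A_0, \lambda_0)$ in $\mathcal{A}_{4g}$. Since $\phi$ is finite and $\Sigma \cap Z$ is Zariski dense in $Z$, the image $Z' := \phi(Z)$ is a curve in $\mathcal{A}_{4g}$ and $\Sigma' \cap Z'$ is Zariski dense in $Z'$. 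We have thus recast an instance of \cref{andre-pink-ag-isog} as an instance of \cref{andre-pink-ag}, at the cost of raising the dimension.

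Next I would establish the polarized conjecture for the curve $Z' \subset \mathcal{A}_{4g}$ by the Pila--Zannier method. The two key ingredients are the Ax--Lindemann--Weierstrass theorem for $\mathcal{A}_{4g}$, and a polynomial lower bound on the Galois orbits of points in $\Sigma' \cap Z'(\bar{\Q})$. The Galois orbit bound requires a polynomial upper bound, in the degree of the field of definition of a point $(B, \mu) \in \Sigma' \cap Z'$, on the degree of a \emph{polarized} isogeny from $\phi(A_0, \lambda_0)$ to $(B, \mu)$. The Masser--Wüstholz theorem (\cref{mw:isogeny-avs}) only supplies an unpolarized isogeny of polynomially bounded degree; \cref{polarized-isogeny-bound} is exactly the tool needed to upgrade this to a polarized isogeny of polynomially bounded degree. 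One then balances this Galois orbit lower bound against the Pila--Wilkie counting upper bound on rational points of small height in the transcendental part of the preimage of $Z'$ in the Siegel half-space, and concludes via Ax--Lindemann that $Z'$ is a weakly special -- hence special -- subvariety of $\mathcal{A}_{4g}$.

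Finally, I would descend through $\phi$: since $\phi$ is a finite morphism whose image lies in a special subvariety of $\mathcal{A}_{4g}$ (the image of the diagonal-type embedding of Shimura data), the irreducible components of $\phi^{-1}(Z')$ are special subvarieties of $\Ag$, which forces the original curve $Z$ to be special. The main obstacle, in my view, is the Pila--Zannier step: one must verify that the polynomial Galois orbit lower bound obtained by composing \cref{mw:isogeny-avs} with \cref{polarized-isogeny-bound} genuinely dominates the Pila--Wilkie counting bound after the heights, arithmetic complexities and field degrees are reconciled. The contribution of the present paper is precisely the arithmetic input (a polynomial polarized isogeny degree bound) that closes this inequality; the rest of the argument is structural.
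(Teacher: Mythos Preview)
The paper does not give its own proof of this theorem; it is quoted from \cite{orr:andre-pink} (Theorem~1.2), and the surrounding discussion only \emph{sketches} an alternative route using the two main results of the present paper. Your proposal reproduces that alternative sketch faithfully: reduce \cref{andre-pink-ag-isog} for a curve in \( \Ag \) to \cref{andre-pink-ag} for a curve in \( \cA_{4g} \) via \cref{isogeny-classes-become-polarized} and the fourth-power map, then run the Pila--Zannier machinery in \( \cA_{4g} \) with the polarized isogeny degree bound obtained by composing \cref{mw:isogeny-avs} with \cref{polarized-isogeny-bound}, and finally descend through the Shimura morphism \( \phi \). This is exactly the strategy the paper outlines in the paragraph following the statement of \cref{andre-pink-curve-in-ag}.

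One point worth flagging: the original proof in \cite{orr:andre-pink} does \emph{not} use \cref{isogeny-classes-become-polarized} or \cref{polarized-isogeny-bound}; it instead handles the mismatch between polarized and unpolarized isogenies by a more elaborate version of the Pila--Zannier counting argument (working with a larger semialgebraic set and a different height function, see \cite{orr:andre-pink} section~3.2). So your proposal is the paper's \emph{alternative} proof, not the proof the theorem is attributed to. The paper itself remarks that this alternative, once the proofs of \cref{isogeny-classes-become-polarized} and \cref{polarized-isogeny-bound} are included, is actually longer overall than the original.
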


The proof of \cref{andre-pink-curve-in-ag} relies on the Masser--Wüstholz isogeny theorem and the Pila--Zannier method for solving unlikely intersections problems in Shimura varieties.
The proof is complicated by the fact that a straightforward application of the Pila--Zannier method would only apply to polarized isogenies, while the Masser--Wüstholz theorem concerns unpolarized isogenies.
In \cite{orr:andre-pink}, this is worked around by a more sophisticated application of the Pila--Zannier method, as discussed in \cite{orr:andre-pink} section~3.2.

The results in this paper explore the relationship between polarized and unpolarized isogenies directly instead of bypassing it, and thereby give an alternative proof of \cref{andre-pink-curve-in-ag}.
We can use \Cref{polarized-isogeny-bound} to replace most of the difficult parts of \cite{orr:andre-pink}, leading to a proof of \cref{andre-pink-ag} for a curve~\( Z \).
\Cref{isogeny-classes-become-polarized} allows us to deduce \cref{andre-pink-ag-isog} for a curve in \( \Ag \) from~\cref{andre-pink-ag} for a curve in \( \cA_{4g} \).

Nevertheless the proofs of the theorems in this paper are sufficiently complicated that the proof of \cref{andre-pink-curve-in-ag} sketched above seems to be longer overall than the proof in \cite{orr:andre-pink}.

\section{Background: Hermitian Forms and Involutions}

Before the main proofs of this paper, we collect some basic facts about Hermitian forms over division algebras.
We begin with some general definitions and facts, to fix the terminology and notation we will use, which is based on~\cite{knus+:involutions}.
We then state results about two particular aspects of the theory of Hermitian forms: positive definite forms, using~\cite{kottwitz:positive-involutions}, and lattices, using~\cite{shimura:alternating-forms} and~\cite{shimura:hermitian-forms-book}.

\subsection{General Hermitian forms and involutions}

Let \( (D, *) \) be a simple algebra with involution.
By an \defterm{involution}, we mean an additive map \( D \to D \) whose square is the identity and which \emph{reverses} the direction of multiplication.
When we say \defterm{simple algebra with involution}, we include the case in which \( D \) is a product of two simple algebras \( D_1 \times D_2 \) and the involution exchanges the two factors (this is not simple as an algebra, but it is simple as an algebra-with-involution).
The involution~\( * \) is said to be \defterm{of the first kind} if it is trivial on the centre of \( D \), and \defterm{of the second kind} otherwise.

We say that an element \( d \in D \) is \defterm{symmetric} if \( d^* = d \).

Let \( V \) be a free right \( D \)-module of finite dimension.
A \defterm{\( (D, *) \)-Hermitian form} on \( V \) is a bi-additive map
\[ \psi \colon V \times V \to D \]
satisfying
\begin{enumerate}
\item \( \psi(va, wb) = a^* \psi(v, w) b \)
for all \( v, w \in V \) and \( a, b \in D \), and
\item \( \psi(w, v) = \psi(v, w)^* \)
for all \( v, w \in V \).
\end{enumerate}
A \defterm{\( (D, *) \)-skew-Hermitian form} on \( D^n \) is a bi-additive map satisfying condition~(1) above and also
\( \psi(w, v) = -\psi(v, w)^* \).

A Hermitian or skew-Hermitian form is \defterm{non-singular} (also called \defterm{regular}) if the only element \( v \in V \) such that \( \psi(v, w) = 0 \) for all \( w \in V \) is \( v = 0 \).

Given any non-singular Hermitian or skew-Hermitian form \( \psi \colon V \times V \to D \), there is a unique involution \( \dag \) of \( \End_D(V) \), called the \defterm{adjoint involution} with respect to \( \psi \), such that
\[ \psi(av, w) = \psi(v, a^\dag w) \]
for all \( v, w \in V \) and \( a \in \End_D(V) \).

The following proposition shows that we can reverse the construction of adjoint involutions, passing from an involution to a Hermitian or skew-Hermitian form.
This proposition is \cite{knus+:involutions} Proposition~I.4.2 whenever \( D \) is simple as an algebra (forgetting the involution), and it follows from \cite{knus+:involutions} Proposition~I.2.14 whenever \( D \) is a product of two simple algebras.

\begin{proposition} \label{adjoint-involutions}
Let \( (D, *) \) be a simple algebra with involution.
Let \( V \) be a free right \( D \)-module of finite dimension.

\begin{enumerate}
\item If \( * \) is of the first kind, then the map sending a form to its associated adjoint involution induces a bijection between
\begin{enumerate}
\item non-singular \( (D, *) \)-Hermitian and \( (D, *) \)-skew-Hermitian forms on~\( V \), modulo multiplication by an element of the centre of \( D \), and
\item involutions of \( \End_D(V) \) of the first kind.
\end{enumerate}

\item If \( * \) is of the second kind, then the map sending a form to its associated adjoint involution induces a bijection between
\begin{enumerate}
\item non-singular \( (D, *) \)-Hermitian forms on \( V \), modulo multiplication by an element of the centre of \( D \) which is fixed by \( * \), and
\item involutions \( \dag \) of \( \End_D(V) \) such that \( a^\dag = a^* \) for all \( a \) in the centre of~\( D \).
\end{enumerate}
\end{enumerate}
\end{proposition}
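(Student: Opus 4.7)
The plan is to derive both parts from the Skolem--Noether theorem, using a reference Hermitian form for comparison. After choosing a $D$-basis of $V$, let $\psi_0(v, w) = \sum v_i^* w_i$ be the standard $(D, *)$-Hermitian form on $V$, and let $\dag_0$ be its adjoint involution on $\End_D(V) \cong \mathrm{M}_n(D)$; concretely, $a^{\dag_0}$ is the conjugate transpose of $a$. To see that the adjoint construction is well-defined, note that non-singularity of a form $\psi$ yields a $*$-semilinear bijection $V \to \Hom_D(V, D)$, $v \mapsto \psi(v, -)$, from which existence and uniqueness of $a^\dag \in \End_D(V)$ with $\psi(av, w) = \psi(v, a^\dag w)$ is immediate. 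Routine calculations show $\dag$ is an anti-homomorphism squaring to the identity, and its restriction to $Z(\End_D(V)) = Z(D)$ equals $*|_{Z(D)}$; hence $\dag$ is of the same kind as $*$, and in part~(2) automatically satisfies $a^\dag = a^*$ on the centre.

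For injectivity, if $\psi_1$ and $\psi_2$ induce the same $\dag$, then using the $\psi_1$-bijection one writes $\psi_2(v, w) = \psi_1(v, \phi w)$ for a unique $\phi \in \End_D(V)$. Equating the adjoint formulas for the two forms forces $\phi a^\dag = a^\dag \phi$ for all $a$, so $\phi$ is central and equals multiplication by some $c \in Z(D)^\times$. Comparing symmetry relations then yields $c^* = c$ when $\psi_1, \psi_2$ are both Hermitian or both skew-Hermitian, and $c^* = -c$ when they are of opposite types. In part~(1), $*$ is trivial on $Z(D)$, so only $c^* = c$ occurs and the Hermitian and skew-Hermitian forms give separate equivalence classes modulo $Z(D)^\times$; in part~(2), restricting to Hermitian forces $c^* = c$, matching the symmetric-scalar quotient in the statement.

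For surjectivity, given any involution $\dag'$ of $\End_D(V)$ of the required type, $\dag' \circ \dag_0$ is an algebra automorphism of $\mathrm{M}_n(D)$ that fixes $Z(D)$ pointwise (in the first-kind case because both involutions are trivial there; in the second-kind case because both agree with $*$). By Skolem--Noether this automorphism equals $\Inn(u)$ for some $u \in \mathrm{M}_n(D)^\times$, and the form $\psi(v, w) := \psi_0(v, u w)$ has adjoint involution $\dag'$. From $(\dag')^2 = \id$ one deduces that $u (u^{\dag_0})^{-1} = c$ lies in $Z(D)^\times$ and satisfies $c c^* = 1$. The key technical step is to rescale $u$ by a central unit so that $u^{\dag_0} = \pm u$: in the first-kind case this is automatic, since $c^* = c$ forces $c^2 = 1$ and hence $c = \pm 1$; in the second-kind case, Hilbert~90 applied to the quadratic extension $Z(D)$ over its $*$-fixed subfield writes $c$ as $\lambda^*/\lambda$ for some $\lambda \in Z(D)^\times$, and replacing $u$ by $\lambda u$ arranges $u^{\dag_0} = u$. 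The sign $\pm 1$ in the first-kind case determines whether $\psi$ is Hermitian or skew-Hermitian, while in the second-kind case $\psi$ is always Hermitian after this adjustment. This Hilbert-90 step, combined with the sign bookkeeping that matches the symmetry type of the form to the abstract involution data, is the main subtlety of the argument.
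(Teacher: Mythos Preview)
Your argument is correct and is the standard proof via Skolem--Noether and Hilbert~90. Note, however, that the paper does not actually prove this proposition: it simply cites \cite{knus+:involutions} Proposition~I.4.2 (for $D$ genuinely simple) and Proposition~I.2.14 (for $D$ a product of two simple algebras exchanged by~$*$). Your proof is essentially the one found in that reference, so there is no meaningful divergence in approach---you have just written out what the paper leaves to citation. One small remark: in the exchange case $D = D_1 \times D_2$, the algebra $\mathrm{M}_n(D)$ is not simple, so Skolem--Noether must be applied factorwise (the automorphism $\dag' \circ \dag_0$ fixes the centre $Z(D_1) \times Z(D_2)$ and hence preserves each simple factor), and the ``Hilbert~90'' step becomes the elementary observation that any $(c_1, c_2)$ with $c_1 c_2 = 1$ can be written as $(\lambda_2/\lambda_1, \lambda_1/\lambda_2)$. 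Your sketch implicitly covers this, but it is worth making explicit since the paper singles out this case for a separate citation.
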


The natural notion of equivalence between Hermitian or skew-Hermitian forms is isometry.
We say that two right \( D \)-modules \( V_1 \) and \( V_2 \) equipped with Hermitian or skew-Hermitian forms \( \psi_1 \) and \( \psi_2 \) respectively are \defterm{isometric} if there is an isomorphism of \( D \)-modules \( f \colon V_1 \to V_2 \) such that
\[ \psi_2(f(v), f(w)) = \psi_1(v, w) \text{ for all } v, w \in V_1. \]

\subsection{Positive definite forms and involutions}

In this section we study positivity properties of Hermitian forms on semisimple \( \Q \)- and \( \R \)-algebras with involution.

We begin by making definitions when \( (D, *) \) is a semisimple \( \R \)-algebra with involution.
Let \( V \) be a finite-dimensional right \( D \)-module.

We say that a \( (D, *) \)-Hermitian form \( \psi \colon V \times V \to D \) is \defterm{positive definite} if
\[ \Tr(\psi(v, v) \, ; \, D) > 0 \text{ for all } v \in V - \{ 0 \}, \]
where the trace is taken with respect to the action of \( D \) on itself (viewed as an \( \R \)-vector space) by left multiplication.
Note that a skew-Hermitian form can never be positive definite because it will have \( \Tr (\psi(v, v) \, ; \, D) = 0 \) for all \( v \in V \).

We say that a symmetric element \( d \in D \) is \defterm{positive} if the \( (D, *) \)-Hermitian form on \( D \) itself given by
\[ (v, w) \mapsto v^* d w \]
is positive definite.

We say that \( * \) is a \defterm{positive involution} if \( 1 \) is a positive element of \( D \) with respect to \( * \).
In other words, the bilinear form \( (v, w) \mapsto \Tr(v^* w \, ; \, D) \colon D \times D \to \R \) is positive definite.

If \( (D, *) \) is a semisimple \( \Q \)-algebra with involution, then we make the same definitions as above with \( \Tr_{D/\R} \) replaced by \( \Tr_{D/\Q} \).
In other words, an involution of a \( \Q \)-algebra~\( D \) is positive if and only if its extension to \( D \otimes_\Q \R \) is positive, and similarly for the other definitions.

The lemmas below apply to both semisimple \( \R \)-algebras and semisimple \( \Q \)-algebras \( D \).
For each lemma we either begin by reducing to the case of \( \R \)-algebras, or the proof applies directly to both cases.

\begin{lemma} \label{positive-adjoint}
If \( \psi \colon V \times V \to D \) is a positive definite \( (D, *) \)-Hermitian form, then the associated adjoint involution \( \dag \colon \End_D(V) \to \End_D(V) \) is positive.
\end{lemma}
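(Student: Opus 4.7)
The plan is to reduce the positivity of $\dag$ on $\End_D(V)$ to the standard fact that a self-adjoint operator on a real inner product space has non-negative trace. Since positivity of an involution on a $\Q$-algebra is defined by base change to $\R$, I may assume throughout that $D$ is a semisimple $\R$-algebra with positive involution~$*$.

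First I will construct an $\R$-valued form on $V$ out of $\psi$ by setting
\[ \innerprod{v}{w} := \Tr(\psi(v, w) \, ; \, D). \]
This form is $\R$-bilinear because $\psi$ is bi-additive and $\R$ lies in the centre of $D$ and is fixed by $*$, and it is positive definite by the hypothesis that $\psi$ is positive definite. To see that it is symmetric, I will combine $\psi(w, v) = \psi(v, w)^*$ with the identity $\Tr(d \, ; \, D) = \Tr(d^* \, ; \, D)$ for $d$ in a semisimple $\R$-algebra with involution. This last identity is the one point of the argument which is not a direct unwinding of definitions, and I expect it to be the main technical hurdle; I will verify it either by observing that $L_{d^*} = {*} \circ R_d \circ {*}$ (so that $\Tr(L_{d^*}) = \Tr(R_d) = \Tr(L_d)$ on each matrix factor), or by reducing to simple factors of $D$ and invoking the standard behaviour of the reduced trace under involutions of the first and second kinds.

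Once $\innerprod{\cdot}{\cdot}$ is in place, the defining property $\psi(av, w) = \psi(v, a^\dag w)$ of the adjoint involution immediately yields $\innerprod{av}{w} = \innerprod{v}{a^\dag w}$, so $a^\dag$ coincides with the $\R$-linear adjoint of $a$ on the real inner product space $V$. Choosing an $\innerprod{\cdot}{\cdot}$-orthonormal basis $\{v_i\}$ of $V$ and computing
\[ \Tr(a^\dag a \, ; \, V) = \sum_i \innerprod{a^\dag a v_i}{v_i} = \sum_i \innerprod{a v_i}{a v_i} \]
then gives $\Tr(a^\dag a \, ; \, V) \geq 0$, with strict positivity whenever $a \neq 0$.

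Finally, I will transfer this positivity from $V$ to $\End_D(V)$. As a left $\End_D(V)$-module, $\End_D(V) = \Hom_D(V, V)$ is isomorphic to $V^{\oplus \dim_D V}$ via evaluation at a chosen $D$-basis of $V$, so
\[ \Tr(a \, ; \, \End_D(V)) = (\dim_D V) \cdot \Tr(a \, ; \, V) \]
for every $a \in \End_D(V)$. Applying this to $a^\dag a$ gives $\Tr(a^\dag a \, ; \, \End_D(V)) > 0$ for every nonzero $a$, which is precisely the statement that $\dag$ is a positive involution.
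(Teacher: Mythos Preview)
Your proof is correct and follows essentially the same approach as the paper: reduce to $\R$, form the real inner product $\innerprod{v}{w} = \Tr(\psi(v,w);D)$, and observe that $\dag$ agrees with the $\R$-linear adjoint with respect to this inner product. The only difference is packaging: the paper cites Kottwitz's Lemma~2.2 to reformulate positivity as $\Tr(xx^\dag;\End_\R(V))>0$ and then invokes the isometry of all positive definite real forms to reduce to the transpose involution, whereas you compute $\Tr(a^\dag a;V)$ directly in an orthonormal basis and pass to $\Tr(a^\dag a;\End_D(V))$ via the module decomposition $\End_D(V)\cong V^{\oplus \dim_D V}$---a more self-contained route that avoids the external citation.
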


\begin{proof}
If the base field is \( \Q \), replace \( D \) by \( D \otimes_\Q \R \).
This does not change either the premise or the conclusion of the lemma.

By \cite{kottwitz:positive-involutions} Lemma~2.2, \( \dag \) is a positive involution if and only if
\begin{equation} \label{eqn:trace-endrv}
\Tr (xx^\dag \, ; \, \End_\R(V)) > 0
\end{equation}
for all \( x \in \End_D(V) - \{ 0 \} \),
where \( \End_D(V) \) acts on \( \End_\R(V) \) by left multiplication.

Let \( \theta \) denote the adjoint involution of \( \End_\R(V) \) with respect to the symmetric \( \R \)-bilinear form
\[ \Tr(\psi(-, -) \, ; \, D) \colon V \times V \to \R. \]
Observe that \( \dag \) is the restriction of \( \theta \) to \( \End_D(V) \), so in order to prove \eqref{eqn:trace-endrv} it suffices to prove that \( \theta \) is a positive involution of \( \End_\R(V) \).

All positive definite symmetric bilinear forms on a real vector space are isometric, so we can replace \( \Tr \psi \) by the standard symmetric form on \( \R^n \) (where \( n = \dim_\R V \)).
This replaces \( \theta \) by the transpose involution of \( \rM_n(\R) \), which is well-known to be a positive involution.
\end{proof}

\begin{lemma} \label{positive-definite-form-multiply}
If \( \psi \colon V \times V \to D \) is a positive definite \( (D, *) \)-Hermitian form and \( q \in \End_D(V) \) is symmetric and positive with respect to the adjoint involution \( \dag \) associated with \( \psi \), then
\[ \psi_q \colon (v, w) \mapsto \psi(v, qw) \]
is a positive definite \( (D, *) \)-Hermitian form on \( V \).
\end{lemma}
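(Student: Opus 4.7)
The plan is to first verify the formal properties making $\psi_q$ a Hermitian form, then reduce positive definiteness to the existence of a $\dag$-square root of $q$.

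Bi-additivity and the sesquilinear identity $\psi_q(va, wb) = a^* \psi_q(v, w) b$ are immediate from the $D$-linearity of $q$ and the corresponding properties of $\psi$. The conjugate-symmetry $\psi_q(w, v) = \psi_q(v, w)^*$ is where $q^\dag = q$ enters: by the adjoint property of $\psi$,
\[ \psi_q(w, v) = \psi(w, qv) = \psi(qv, w)^* = \psi(v, q^\dag w)^* = \psi_q(v, w)^*. \]

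For positive definiteness, I plan to tensor with $\R$ (which affects neither the hypothesis nor the conclusion) and then produce an invertible $s \in \End_D(V)$ with $s^\dag s = q$. Granting such an $s$, the identity $\psi(v, s^\dag w) = \psi(sv, w)$ gives $\psi_q(v, v) = \psi(sv, sv)$, hence
\[ \Tr(\psi_q(v, v); D) = \Tr(\psi(sv, sv); D) > 0 \]
for every $v \neq 0$, since $\psi$ is positive definite and $s$ is an automorphism.

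The hard part will therefore be producing the square root $s$. By \cref{positive-adjoint}, the involution $\dag$ on the semisimple $\R$-algebra $\End_D(V)$ is positive. The structure theory of positive involutions on semisimple $\R$-algebras (stemming from the Albert classification) decomposes such an algebra into simple factors of the form $\rM_n(F)$ with its conjugate-transpose involution for $F \in \{\R, \C, \HH\}$, together with possibly some ``exchange'' factors $\rM_n(F) \times \rM_n(F)^{\op}$ on which $\dag$ swaps the components. In each of these explicit models a symmetric element is positive in the sense of this section precisely when it is a positive-definite Hermitian matrix (resp.\ a pair $(x, x^\dag)$ with $x$ invertible in the exchange case), whereupon the classical spectral theorem supplies a Hermitian positive square root $s$. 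Verifying this algebraic--matricial translation of positivity in each Albert type is the substantive content; once it is in place, the conclusion follows formally from the square-root reduction above.
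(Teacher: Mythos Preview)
Your proposal is correct and follows essentially the same strategy as the paper: verify the Hermitian identity via $q^\dag = q$, pass to $\R$-coefficients, factor $q$ through the adjoint involution, and reduce positive definiteness of $\psi_q$ to that of $\psi$ via the substitution $v \mapsto sv$ (resp.\ $v \mapsto b^\dag v$).

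The only genuine difference is in how the square root is obtained. The paper invokes \cite{kottwitz:positive-involutions} Lemma~2.8 directly to produce $b$ with $q = bb^\dag$, treating this as a black box about positive involutions on semisimple $\R$-algebras. You instead propose to unpack that black box by hand: classify the simple factors of $(\End_D(V), \dag)$ over $\R$, identify the positive symmetric elements with positive-definite Hermitian matrices in each factor, and appeal to the spectral theorem. Both yield the same $s$; your route is more self-contained but longer, while the paper's citation is a one-line shortcut. One small saving for you: positive involutions cannot have exchange factors (if $\dag$ swapped two simple factors, an element $(a,0)$ would satisfy $xx^\dag = 0$, contradicting positivity), so that case in your plan is vacuous.
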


\begin{proof}
If the base field is \( \Q \), replace \( D \) by \( D \otimes_\Q \R \).
This does not change either the premise or the conclusion of the lemma.

The fact that \( q \) is symmetric implies that \( \psi \) is Hermitian.

By \cref{positive-adjoint}, \( \dag \) is a positive involution of \( \End_D(V) \).
Hence we can apply \cite{kottwitz:positive-involutions} Lemma~2.8 to obtain \( b \in \End_D(V) \) such that \( q = b b^\dag \).

We then have
\[ \psi_q(v, v) = \psi(v, b b^\dag v) = \psi(b^\dag v, b^\dag v) > 0 \]
for all \( v \in V - \{ 0 \} \).
\end{proof}

\begin{lemma} \label{positive-involutions-restrict}
Suppose that \( D \) is a division algebra.
Let \( E = \End_D(V) \) and suppose that we are given a positive involution \( \dag \) of \( E \).

Then there exist
\begin{enumerate}
\item a positive involution \( * \) of \( D \), and
\item a positive definite \( (D, *) \)-Hermitian form \( \psi \colon V \times V \to D \)
\end{enumerate}
such that \( \dag \) is the adjoint involution associated with \( \psi \).
\end{lemma}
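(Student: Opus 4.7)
The plan is to compare \( \dag \) with the adjoint involution of a concrete reference Hermitian form on \( V \), and then show the two differ by conjugation by a positive symmetric element, so that the desired \( \psi \) is obtained as a twist of the reference form.

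First, I would fix a positive involution \( * \) on \( D \) whose restriction to \( Z(D) \) agrees with \( \dag|_{Z(D)} \). Such a \( * \) exists by the Albert classification of positive involutions: the kind (first or second) required is prescribed by \( \dag|_{Z(D)} = \dag|_{Z(E)} \), and after base change \( D \otimes_\Q \R \) is a product of simple \( \R \)-algebras each of the form \( \rM_k(\R) \), \( \rM_k(\C) \), or \( \rM_k(\HH) \), which admit canonical positive involutions of the prescribed kind. Fixing a \( D \)-basis of \( V \), set
\[ \psi_0(v, w) := \sum_i v_i^* w_i, \]
a positive definite \( (D,*) \)-Hermitian form; by \cref{positive-adjoint} its adjoint involution \( \dag_0 \) on \( E \) is positive.

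Since \( \dag \) and \( \dag_0 \) both restrict to the same involution on \( Z(E) = Z(D) \), their composition \( \dag \circ \dag_0 \) is a \( Z(E) \)-algebra automorphism of the central simple algebra \( E \), hence inner by Skolem--Noether: there exists \( q \in E^\times \), unique up to a factor in \( Z(E)^\times \), with \( a^\dag = q^{-1} a^{\dag_0} q \) for all \( a \in E \). Applying the involution relation \( (\cdot)^{\dag\dag} = \id \) shows \( q^{\dag_0} q^{-1} \in Z(E) \); a Hilbert~90 argument in the second-kind case, and a sign analysis based on the fact that \( \dag \) and \( \dag_0 \) are of the same positive Albert type in the first-kind case, lets us rescale \( q \) so that \( q^{\dag_0} = q \). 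Positivity of both involutions, together with the characterization of positive elements as products \( bb^{\dag_0} \) from \cite{kottwitz:positive-involutions} Lemma~2.8, then forces \( q \) (after possibly replacing it by \( -q \)) to be \( \dag_0 \)-positive.

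With \( q \) symmetric and positive, set \( \psi(v, w) := \psi_0(v, qw) \). By \cref{positive-definite-form-multiply} this is a positive definite \( (D,*) \)-Hermitian form, and the computation
\[ \psi(av, w) = \psi_0(v, a^{\dag_0} q w) = \psi_0(v, q a^\dag w) = \psi(v, a^\dag w) \]
identifies its adjoint involution with \( \dag \). The main obstacle is the combined requirement that \( q \) be both \( \dag_0 \)-symmetric and \( \dag_0 \)-positive: this is essentially the assertion that two positive involutions on a semisimple algebra with equal restriction to the centre are conjugate by a positive symmetric element, a standard but delicate consequence of the positivity formalism developed in \cite{kottwitz:positive-involutions}.
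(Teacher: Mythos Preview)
Your route is genuinely different from the paper's. The paper does not start from a positive involution on \( D \); it takes an \emph{arbitrary} involution \( ! \) on \( D \) (which exists since \( D \) and \( E \) are Brauer-equivalent), obtains a Hermitian or skew-Hermitian form \( \phi \) with adjoint \( \dag \), then finds \( v_0 \) with \( s := \phi(v_0,v_0) \neq 0 \) (using positivity of \( \dag \)) and sets \( d^* = s^{-1} d^! s \), \( \psi = s^{-1}\phi \). Positive definiteness of \( \psi \) is proved directly from positivity of \( \dag \) by an explicit rank-one endomorphism calculation, and positivity of \( * \) then follows from that of \( \psi \). This is self-contained: no Albert classification, no Skolem--Noether, no Hilbert~90. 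Your approach is more structural and is also valid in outline, but note that producing a positive \( * \) on \( D \) \textit{ab initio} via Albert is already close to the content of the lemma --- the standard Albert statement classifies algebras \emph{equipped with} a positive involution, so you are implicitly using that the division algebra Brauer-equivalent to \( (E,\dag) \) inherits one, which is essentially what is being proved.

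There is also a small but genuine gap in the positivity step for \( q \). Once \( q \) is \( \dag_0 \)-symmetric, the remaining freedom is multiplication by \( F_0^\times \), where \( F_0 \) is the \( \dag \)-fixed subfield of \( Z(D) \). Positivity of both involutions forces \( q \) to be definite at each real place of \( F_0 \), but the signs can differ from place to place, so ``replacing \( q \) by \( -q \)'' is not sufficient over \( \Q \) when \( F_0 \) has more than one real place. What you actually need is an element of \( F_0^\times \) with prescribed signs at all real places, which exists by weak approximation since \( F_0 \) is totally real. Your final paragraph rightly flags this step as the delicate one and points to Kottwitz, but the specific remedy you name is too weak.
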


\begin{proof}
This proof applies directly to both \( \Q \)-algebras and \( \R \)-algebras.

The algebras \( D \) and \( E \) are similar.
Hence \cite{knus+:involutions} Proposition~I.3.1 tells us that \( D \) possesses an involution \( ! \) whose restriction to the centre is the same as that of \( \dag \).
By \cref{adjoint-involutions}, there exists a \( (D, !) \)-Hermitian or -skew-Hermitian form \( \phi \colon V \times V \to D \) such that \( \dag \) is the adjoint involution with respect to \( \phi \).
However, the involution~\( ! \) might not be positive and \( \phi \) might not be a positive definite Hermitian form.

By \cref{phi-nondeg} below, there is some \( v_0 \in V \) such that \( \phi(v_0, v_0) \neq 0 \).
Let
\[ s = \phi(v_0, v_0) \in D^\times \]
and observe that \( s^! = \epsilon s \), where \( \epsilon = +1 \) or \( -1 \) according as \( \phi \) is \( (D, !) \)-Hermitian or -skew-Hermitian.

Define a new involution \( * \) of \( D \) and a new bi-additive map \( \psi \colon V \times V \to D \) by
\[ d^* = s^{-1} d^! s \]
and
\[ \psi(v, w) = s^{-1} \phi(v, w). \]
Calculations show that \( \psi \) is a \( (D, *) \)-Hermitian form (regardless of the sign of~\( \epsilon \)) and that \( \dag \) is the associated adjoint involution of \( E \).
The facts that \( * \) is positive and that \( \psi \) is positive definite are \cref{psi-pos-def,star-positive} below.

\begin{nosecclaim} \label{phi-nondeg}
There exists \( v_0 \in V \) such that \( \phi(v_0, v_0) \neq 0 \).
\end{nosecclaim}

Assume for contradiction that \( \phi(v, v) = 0 \) for all \( v \in V \).
Since \( \phi \) is non-singular, we can choose \( v_1, v_2 \in V \) such that \( \phi(v_1, v_2) \neq 0 \).
By multiplying \( v_2 \) by a suitable element of \( D \), we can assume that \( \phi(v_1, v_2) = 1 \).

Define a \( D \)-endomorphism \( e \colon V \to V \) by
\[ e(v) = v_1 \, \epsilon \phi(v_2, v) - v_2 \, \phi(v_1, v). \]
A calculation shows that \( e^\dag = -e \).
Further calculations (using the assumption that \( \phi(v_1, v_1) = \phi(v_2, v_2) = 0 \)) give
\[ ee^\dag(v_1) = -v_1, \quad ee^\dag(v_2) = -v_2, \quad ee^\dag(v) = 0 \text{ if } \phi(v_1, v) = \phi(v_2, v) = 0. \]
It follows that \( ee^\dag \) acts as multiplication by \( -1 \) on the right \( D \)-module spanned by \( v_1 \) and~\( v_2 \), and as multiplication by \( 0 \) on the right \( D \)-module 
\[ \{ v \in V : \psi_0(v_1, v) = \psi_0(v_2, v) = 0 \}. \]
These two submodules span~\( V \), and so \( \Tr(ee^\dag \, ; \, V) < 0 \).
According to \cite{kottwitz:positive-involutions} Lemma~2.2, this contradicts the positivity of \( \dag \).

\begin{nosecclaim} \label{psi-pos-def}
\( \psi \colon V \times V \to D \) is positive definite.
\end{nosecclaim}

For each \( v \in V - \{ 0 \} \), define an endomorphism \( e_v \in E \) by
\[ e_v(w) = v \, \psi(v_0, w). \]
By construction, \( \psi(v_0, v_0) = s^{-1} s = 1 \) and so
\[ e_v(v_0) = v. \]

A calculation shows that
\[ e_v^\dag(w) = v_0 \, \psi(v, w) \text{ for all } w \in V \]
and hence
\[ e_v^\dag e_v(v_0) = v_0 \, \psi(v, v). \]
Meanwhile, \( e_v^\dag e_v(w) = 0 \) if \( \psi(v_0, w) = 0 \).
The submodules \( v_0 D \) and
\[ \{ w \in V : \psi(v_0, w) = 0 \} \]
span \( V \) and so
\[ \Tr(e_v^\dag e_v \, ; \, V) = \Tr(\psi(v, v) \, ; \, D). \]

Because \( \dag \) is positive, \cite{kottwitz:positive-involutions} Lemma~2.2 implies that \( \Tr(e_v^\dag e_v \, ; \, V ) > 0 \).
Hence we have shown that \( \psi \) is positive definite.

\begin{nosecclaim} \label{star-positive}
\( * \) is a positive involution of \( D \).
\end{nosecclaim}

For each \( d \in D - \{ 0 \} \), we have
\[ d^* d = d^* \, \psi(v_0, v_0) \, d = \psi(v_0 d, v_0 d). \]
Hence the fact that \( \psi \) is positive definite implies that \( * \) is a positive involution. 
\end{proof}

\subsection{Lattices and Hermitian forms} \label{ssec:lattices}

Let \( S_0 \) be a Dedekind domain and \( F_0 \) its field of fractions.
Let \( F \) be one of the following \( F_0 \)-algebras:
\begin{enumerate}[(i)]
\item \( F = F_0 \);
\item \( F \) is a separable quadratic extension of \( F_0 \);
\item \( F = F_0 \times F_0 \).
\end{enumerate}
Let \( S \) be the integral closure of \( S_0 \) in \( F \).

Let \( x \mapsto \bar{x} \) denote the identity automorphism of \( F \) in case (i), and the non-trivial element of \( \Aut(F/F_0) \) in cases (ii) and (iii).

Throughout section~\ref{ssec:lattices}, we assume that:
\begin{enumerate}[(a)]
\item \( 2 \) is invertible in \( S_0 \); and
\item in case (ii), \( F/F_0 \) is unramified at all primes of \( S_0 \).
\end{enumerate}

Let \( V \) be a finite-dimensional \( F \)-module and \( \psi \colon V \times V \to F \) an \( F_0 \)-bilinear form of one of the following types:
\begin{enumerate}[(i)]
\item if \( F = F_0 \), then \( \psi \) is either symmetric or skew-symmetric;
\item otherwise, \( \psi \) is \( (F, \emptybar) \)-Hermitian.
\end{enumerate}

By a \defterm{lattice} in a finite-dimensional \( F \)-module \( V \), we mean a finitely generated \( S \)-submodule which spans \( V \) over \( F \).

The \defterm{scale} \( \fs\Lambda \) of a lattice \( \Lambda \) (with respect to \( \psi \)) is the fractional ideal of \( F \) generated by \( \psi(\Lambda, \Lambda) \).
In paragraph~4.6 of \cite{shimura:hermitian-forms-book}, this is denoted \( \mu_0(\Lambda) \).

If \( F = F_0 \) and \( \psi \) is symmetric or if  \( F \neq F_0 \), then one can also define the \defterm{norm ideal} \( \mu(\Lambda) \) to be the fractional ideal of \( F_0 \) generated by \( \{ \psi(v, v) : v \in \Lambda \} \).
Our hypotheses that \( 2 \) is invertible in \( S \) and that \( F/F_0 \) is unramified imply that
\[ \fs\Lambda = \mu(\Lambda) S_0. \]
It follows that \( \fs\Lambda \) and \( \mu(\Lambda) \) determine each other, and we are free to use \( \fs\Lambda \) in place of \( \mu(\Lambda) \) when applying results from \cite{shimura:hermitian-forms-book}.

If \( F = F_0 \) and \( \psi \) is skew-symmetric, then the ideal called \( \mu(\Lambda) \) in the above paragraph is equal to zero.
Hence in this case it only makes sense to consider \( \fs\Lambda \), and it is the same as what is called \( N(\Lambda) \) in \cite{shimura:alternating-forms}.

We say that a lattice \( \Lambda \) is \defterm{maximal} with respect to \( \psi \) if there is no lattice which strictly contains \( \Lambda \) and which has the same scale as \( \Lambda \).
We say that \( \Lambda \) is \defterm{\( \fa \)-maximal} if \( \Lambda \) is maximal and \( \fs\Lambda = \fa \).

In section~\ref{sec:degree-bound}, we will use the following facts about maximal lattices.
In the following lemmas, we assume that \( F \), \( F_0 \), \( S \), \( S_0 \), \( V \) and \( \psi \colon V \times V \to F \) are as above.

\begin{lemma} \label{lattice-in-maximal}
Let \( \Lambda \) be a lattice in \( V \) and let \( \fa \) be a fractional ideal of \( F_0 \) such that
\( \fs\Lambda \subset \fa \).

Then there exists an \( \fa \)-maximal lattice in \( V \) which contains \( \Lambda \).
\end{lemma}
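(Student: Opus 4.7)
The plan is a Zorn's lemma argument, coupled with a local-to-global step to obtain the correct scale. Consider the poset
\[
\mathcal{P} = \{ M : M \text{ a lattice in } V \text{ with } \Lambda \subset M \text{ and } \fs M \subset \fa \},
\]
partially ordered by inclusion. It is nonempty, containing~$\Lambda$ itself.

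First I would bound $\mathcal{P}$ above by a fixed lattice. Using that $\psi$ is non-singular, define the $\fa$-dual
\[
\Lambda^{\#} = \{ v \in V : \psi(v, \Lambda) \subset \fa S \},
\]
which is a lattice in $V$ because $\psi$ is non-singular and $\Lambda$ spans~$V$ over~$F$. For any $M \in \mathcal{P}$ and $v \in M$, we have $\psi(v, \Lambda) \subset \fs M \subset \fa S$, so $M \subset \Lambda^{\#}$. Since $S$ is a Dedekind domain (or a product of two), $\Lambda^{\#}$ is a Noetherian $S$-module, so every ascending chain of $S$-submodules of $\Lambda^{\#}$ stabilizes. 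In particular, the union of any chain in $\mathcal{P}$ is a finitely generated $S$-submodule of $\Lambda^{\#}$ with scale still contained in $\fa$, hence lies in $\mathcal{P}$. Zorn's lemma then produces a maximal element $M_0 \in \mathcal{P}$.

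It remains to verify that $M_0$ is $\fa$-maximal, meaning both $\fs M_0 = \fa$ and that no lattice strictly containing $M_0$ has scale~$\fa$. The second requirement is automatic from the construction: any $M' \supsetneq M_0$ with $\fs M' = \fa$ lies in $\mathcal{P}$, contradicting the maximality of~$M_0$. The equality $\fs M_0 = \fa$, rather than merely $\fs M_0 \subset \fa$, is the main subtle point. I would prove it by a local argument at each prime $\fp$ of $S_0$ where $(\fs M_0)_\fp \subsetneq \fa_\fp$: the hypotheses $2 \in S_0^{\times}$ and $F/F_0$ unramified permit the use of the local classification of lattices from~\cite{shimura:hermitian-forms-book} chapter~10 (and~\cite{shimura:alternating-forms} in the alternating case), which supplies a vector $v \in V_\fp$ whose adjunction to~$M_{0,\fp}$ strictly enlarges it while keeping the scale inside $\fa_\fp$. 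Patching these local enlargements at the finitely many offending primes produces a strict enlargement of~$M_0$ inside~$\mathcal{P}$, contradicting maximality; hence $\fs M_0 = \fa$. This final patching step is the principal obstacle; once it is set up, the Zorn argument is routine thanks to the Noetherian bounding lattice~$\Lambda^{\#}$.
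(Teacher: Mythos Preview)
The paper does not give its own argument here; it cites Shimura (\cite{shimura:hermitian-forms-book} Lemma~4.8 in the symmetric and Hermitian cases, \cite{shimura:alternating-forms} in the skew-symmetric case). Your Zorn/Noetherian argument, bounding everything inside the $\fa$-dual~$\Lambda^{\#}$, is exactly the standard proof of the cited result, so up through the construction of~$M_0$ your approach is the one in the literature.

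The trouble is the final paragraph. The equality $\fs M_0 = \fa$ cannot be established in general, and the ``local enlargement'' you sketch need not exist. Take $F = F_0$ a non-archimedean local field with uniformizer~$\pi$, $V = F$ one-dimensional, $\psi(x,y) = xy$, and $\fa = \pi S_0$. Every lattice in~$V$ is $\pi^n S_0$ with scale $\pi^{2n} S_0$, so no lattice whatsoever has scale exactly~$\fa$; your $M_0$ is $\pi S_0$ with $\fs M_0 = \pi^2 S_0 \subsetneq \fa$, and there is no vector to adjoin. The same obstruction appears for diagonal forms in higher dimension.

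This is really a mismatch between the paper's stated definition of ``$\fa$-maximal'' (which requires $\fs\Lambda = \fa$) and Shimura's standard definition (maximal among lattices with norm ideal contained in~$\fa$). Under the standard definition your $M_0$ is already $\fa$-maximal and the proof ends after the Zorn step; the last paragraph should simply be dropped. The paper's later uses of the lemma are not affected: in each application an $\fa$-maximal lattice in the strict sense is separately known to exist, and then isometry of maximal lattices forces $\fs M_0 = \fa$ anyway.
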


\begin{proof}
If \( F = F_0 \) and \( \psi \) is symmetric or if \( F \neq F_0 \), then this is \cite{shimura:hermitian-forms-book} Lemma~4.8.

If \( F = F_0 \) and \( \psi \) is skew-symmetric, then this is the sentence immediately preceding Proposition~1.4 in \cite{shimura:alternating-forms}.
\end{proof}

\begin{lemma} \label{maximal-lattices-are-isometric}
Let \( \fa \) be a fractional ideal of \( F_0 \).
All \( \fa \)-maximal lattices in \( V \) are isometric.
\end{lemma}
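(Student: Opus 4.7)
The strategy is local-global: I would first establish that any two $\fa$-maximal lattices become isometric after completing at each prime $\fp$ of $S_0$, and then patch these local isometries into a global element of the isometry group $U(V, \psi)$ sending $\Lambda_1$ to $\Lambda_2$.

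For the local step, fix a prime $\fp$ of $S_0$ and work over the completion $S_{0,\fp}$, so that $V_\fp = V \otimes_{F_0} F_{0,\fp}$ carries the completed form $\psi_\fp$. Under the standing hypotheses that $2$ is a unit in $S_0$ and (in case (ii)) that $F/F_0$ is unramified, the local structure theorems of Shimura give a canonical decomposition of an $\fa S_{0,\fp}$-maximal lattice as an orthogonal sum of standard hyperbolic planes together with a small anisotropic kernel whose isometry class is determined by $\fa$ alone. In concrete terms this is covered by Chapter~4 of \cite{shimura:hermitian-forms-book} (for the symmetric and Hermitian cases) and by \cite{shimura:alternating-forms} Proposition~1.4 (for the alternating sub-case of (i)). The outcome is that $\Lambda_{1,\fp}$ and $\Lambda_{2,\fp}$ are related by some local isometry $g_\fp \in U(V_\fp, \psi_\fp)$, and that for all but finitely many $\fp$ (those away from the supports of $\Lambda_1$, $\Lambda_2$ and $\fa$) one may take $g_\fp = 1$.

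For the global step, I would use approximation inside the isometry group $U(V, \psi)$ to replace the adelic collection $(g_\fp)$ by a global isometry $g$ with $g\Lambda_1 = \Lambda_2$. In the symplectic and Hermitian settings this follows cleanly from strong approximation for the simply connected cover; for symmetric bilinear forms over $F_0$ one must contend with the possible failure of strong approximation for orthogonal groups. I would bypass this by induction on $\dim_F V$: combining the local decompositions above with the Witt extension theorem, one splits off a hyperbolic plane defined over $F$ that is $\fa$-maximal in both $\Lambda_1$ and $\Lambda_2$ simultaneously, reducing to a strictly lower-dimensional form of the same type. The base case is an anisotropic space of bounded dimension, where the $\fa$-maximal lattice is forced to be unique by the local rigidity already established. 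The main obstacle is the simultaneous global splitting-off step — guaranteeing that a single rational hyperbolic plane can be chosen which meets both $\Lambda_1$ and $\Lambda_2$ in maximal sublattices — and this is precisely what Shimura carries out in the cited chapters, so the proof ultimately reduces to quoting those results.
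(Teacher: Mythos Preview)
The paper's own proof is a one-line citation per case: it simply invokes Shimura's Lemma~5.9 (symmetric and quadratic-Hermitian cases), Lemma~4.12 (the $F=F_0\times F_0$ case), and Proposition~1.4 of \cite{shimura:alternating-forms} (the alternating case). No local-global patching is attempted; the cited results already give the desired isometry directly, and in the paper the lemma is only ever applied with $S_0$ the ring of integers of a $p$-adic local field (inside the proof of \cref{partial-bound-local-split}). In that local setting your ``local step'' \emph{is} the whole proof, and the subsequent adelic patching is superfluous.

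There is a genuine gap in your global step. Over a global Dedekind domain the statement ``all $\fa$-maximal lattices in $(V,\psi)$ are isometric'' is \emph{false} in case~(i) with $\psi$ symmetric: two $\fa$-maximal lattices always lie in the same genus, but a genus can contain several isometry classes even after inverting $2$. This is exactly the failure of strong approximation for orthogonal groups that you flag, and it cannot be circumvented by induction on $\dim_F V$. Your inductive step requires producing a single rational hyperbolic plane that is simultaneously an $\fa$-maximal orthogonal summand of both $\Lambda_1$ and $\Lambda_2$; if such a splitting always existed, the induction would force the global class number of every genus of maximal lattices to be $1$, which it is not. Shimura's cited chapters do not establish such a global simultaneous splitting; the results you are pointing to (in particular Lemma~5.9) are classification theorems for maximal lattices over a \emph{complete} local base, and that is all the paper needs or claims.

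So: your local analysis is on target and matches what the paper quotes, but the global patching argument is both unnecessary for the paper's purposes and, in the orthogonal case, does not go through.
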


\begin{proof}
If \( F = F_0 \) and \( \psi \) is symmetric or if \( F/F_0 \) is a quadratic field extension, then this is \cite{shimura:hermitian-forms-book} Lemma~5.9.

If \( F = F_0 \times F_0 \), then this is \cite{shimura:hermitian-forms-book} Lemma~4.12.

If \( F = F_0 \) and \( \psi \) is skew-symmetric, then this follows from \cite{shimura:alternating-forms} Proposition~1.4.
\end{proof}

\begin{lemma} \label{symmetric-stabilizer-implies-maximal-lattice}
Let \( \dag \) be the adjoint involution of \( \End_F(V) \) with respect to \( \psi \).

Let \( \Lambda \subset V \) be a lattice and let \( R \) be the stabilizer in \( \End_F(V) \) of \( \Lambda \).
Suppose that \( R \) is a maximal order in \( \End_F(V) \) and that \( \dag \) maps \( R \) into itself.

Then \( \Lambda \) is a maximal lattice with respect to \( \psi \).
\end{lemma}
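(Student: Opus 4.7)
The plan is to introduce the dual lattice $\Lambda^\# := \{v \in V : \psi(v, \Lambda) \subseteq \fs\Lambda\}$. Clearly $\Lambda \subseteq \Lambda^\#$, and any lattice $\Lambda' \supseteq \Lambda$ with the same scale as $\Lambda$ lies inside $\Lambda^\#$ since $\psi(\Lambda', \Lambda) \subseteq \fs\Lambda' = \fs\Lambda$. So it suffices to show $\Lambda = \Lambda^\#$: any such $\Lambda'$ is then pinched to $\Lambda$, giving maximality. The hypothesis $R^\dag \subseteq R$ implies that $R$ also stabilizes $\Lambda^\#$: for $a \in R$, $v \in \Lambda^\#$, $w \in \Lambda$, we have $\psi(av, w) = \psi(v, a^\dag w) \in \fs\Lambda$ because $a^\dag w \in \Lambda$. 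Hence the stabilizer of $\Lambda^\#$ is an $S_0$-order containing the maximal order $R$, so it equals $R$. Thus $\Lambda$ and $\Lambda^\#$ are both $R$-lattices in $V$ whose stabilizer is exactly $R$.

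To show $\Lambda^\# = \Lambda$, I would localize at each prime $\fp$ of $S_0$; the relevant hypotheses (maximality of lattices, maximality of orders, $\dag$-stability) are all local. Locally $E = \End_F(V)$ is a matrix algebra (or a product of two matrix algebras in case (iii)) over a localization of $F$, and every maximal $S_{0,\fp}$-order in it is conjugate to a standard $M_n(\cO_{F,\fp})$. After choosing a convenient $F$-basis we may assume $R = M_n(\cO_{F,\fp})$, and then every $R$-stable lattice in $V = F_\fp^n$ has the form $\pi^k \cO_{F,\fp}^n$ for a uniformizer $\pi$ and some $k \in \Z$. The condition $R^\dag = R$ then forces the Gram matrix $J$ of $\psi$ in this basis to lie in the normalizer $N_{E^\times}(R)$, which modulo $R^\times$ is generated by $\pi$; so $J = \pi^m U$ for some $U \in \GL_n(\cO_{F,\fp})$ (symmetric, skew-symmetric, or Hermitian as appropriate). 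A direct calculation of $\Lambda^\#$ from the explicit form $J = \pi^m U$ and $\Lambda = \pi^k \cO_{F,\fp}^n$ then yields $\Lambda^\# = \Lambda$.

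The main obstacle will be the case analysis that this approach requires: the local computation must be carried out separately for the three possibilities for $F$ in section~\ref{ssec:lattices} (with case (iii) needing particular care, since $\dag$ there exchanges the two simple factors of $E$) and for the various symmetry types of $\psi$. In every case, however, the argument follows the same template: $\dag$-stability of a maximal order rigidifies the Gram matrix into a small coset of $R^\times$ in the normalizer, and this rigidity forces $\Lambda^\#$ to coincide with $\Lambda$.
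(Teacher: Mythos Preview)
Your opening coincides with the paper's: introduce the dual lattice \(\Lambda^\#=\{v\in V:\psi(v,\Lambda)\subset\fs\Lambda\}\), reduce maximality to \(\Lambda^\#=\Lambda\), and use \(\dag\)-stability of \(R\) together with maximality of \(R\) to conclude that \(R\) is also the exact stabilizer of \(\Lambda^\#\).

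From that point the paper takes a shorter, case-free route. Having that \(\Lambda\) and \(\Lambda^\#\) share the same (maximal) stabilizer, the paper infers \(\Lambda^\#=u\Lambda\) for some \(u\in F^\times\), and then feeds this directly back into the defining inclusion:
\[
\psi(\Lambda^\#,\Lambda)=\psi(u\Lambda,\Lambda)=\bar u\,\psi(\Lambda,\Lambda)=\bar u\,\fs\Lambda\subset\fs\Lambda,
\]
whence \(\bar u\in S\), so \(u\in S\) and \(\Lambda^\#=u\Lambda\subset\Lambda\). This argument is uniform in the three possibilities for \(F\) and in the symmetry type of \(\psi\); the anticipated case analysis never arises. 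Your localized Gram-matrix computation is correct and essentially rediscovers the step \(\Lambda^\#=\pi^\ell\Lambda\) locally, but then takes a detour through \(J=\pi^m U\) to pin down \(\ell\). You are one move away from the shortcut: once you know \(\Lambda^\#=\pi^\ell\Lambda\), plug it into \(\psi(\Lambda^\#,\Lambda)\subset\fs\Lambda\) to get \(\overline{\pi^\ell}\in S\) directly, without ever writing the Gram matrix or splitting into cases.
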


\begin{proof}
Let \( \fa = \fs\Lambda \) and let
\[ \Lambda^* = \{ v \in V \mid \psi(v, \Lambda) \subset \fa \}. \]
Observe that any lattice which contains \( \Lambda \) and which has scale \( \fa \) must be contained in \( \Lambda^* \).
Hence in order to prove that \( \Lambda \) is maximal, it suffices to show that \( \Lambda = \Lambda^* \).

Since \( R \) is \( \dag \)-stable, we have
\[ \psi(Rv, w) = \psi(v, Rw) \subset \fa \text{ for all } v \in \Lambda^*, w \in \Lambda. \]
Hence \( R \) stabilizes \( \Lambda^* \).
Since \( R \) is a maximal order in \( \End_F(V) \), it follows that \( R \) is equal to the stabilizer of \( \Lambda^* \).

In other words \( \Lambda \) and \( \Lambda^* \) have the same stabilizer, and so \( \Lambda^* = u\Lambda \) for some scalar \( u \in F^\times \).
This implies that
\[ \psi(\Lambda^*, \Lambda) = \bar{u} \psi(\Lambda, \Lambda). \]
But the definition of \( \Lambda^* \) implies that \( \psi(\Lambda^*, \Lambda) \subset \fa \) so \( \bar{u} \fa \subset \fa \).
This implies that \( \bar{u} \in S \) so also \( u \in S \).
Hence \( \Lambda^* \subset \Lambda \).

The inclusion \( \Lambda \subset \Lambda^* \) is obvious.
\end{proof}

\section{An Example of Polarized Isogeny Classes} \label{sec:example}

We give an example to show that polarized isogeny classes truly can be smaller than isogeny classes.

The monoid of polarizations of an abelian variety depends on its endomorphism ring, and hence the same is true for the number of polarized isogeny classes contained in the isogeny class of that abelian variety.
If \( (A, \lambda) \) is a principally polarized abelian variety such that \( \End A \) is either \( \Z \) or an order in an imaginary quadratic field, then all polarizations of \( A \) must have the form \( n.\lambda \) for some \( n \in \Z \).
Hence in these cases (which include all elliptic curves over fields of characteristic zero), all isogenies from \( A \) to another principally polarized abelian variety are automatically polarized isogenies.

The following proposition shows that this fails when we consider the next simplest case, namely abelian surfaces with multiplication by a real quadratic field.

\begin{proposition} \label{isog-inf-hecke}
Let \( (A, \lambda) \) be a principally polarized abelian variety over an algebraically closed field, such that \( \End(A) \) is the ring of integers of a real quadratic field.

There are infinitely many distinct polarized isogeny classes of principally polarized abelian varieties, all isogenous to \( A \).
\end{proposition}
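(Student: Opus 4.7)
The plan is to classify principally polarized abelian varieties isogenous to $(A, \lambda)$ by a single invariant lying in a certain quotient of~$F^\times$, where $F = \End(A) \otimes_\Z \Q$ is the real quadratic field in question, and then to exhibit infinitely many values of this invariant using split primes of~$F$.

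Since $F$ is totally real, the only positive involution of~$F$ is the identity, so the Rosati involution associated with~$\lambda$ is trivial.  Via the isomorphism $\End(A) \xrightarrow{\sim} \Hom(A, A^\vee)$, $\phi \mapsto \lambda \circ \phi$, the polarizations of~$A$ correspond bijectively to totally positive elements of~$\mathcal{O}_F$, with~$\lambda$ itself corresponding to~$1$.  For any isogeny $f \colon A \to B$ with $(B, \mu)$ principally polarized and isogenous to~$A$, the pullback $f^*\mu$ is a polarization of~$A$ and so corresponds to a totally positive $q_f \in \mathcal{O}_F$.  Conversely, every totally positive $q \in \mathcal{O}_F$ should arise this way: choose a maximal isotropic subgroup $K \subset \ker(q\lambda)$ so that $q\lambda$ descends to a principal polarization on $B = A/K$.

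Two choices of~$f$ for the same $(B, \mu)$ differ by multiplication by an $\alpha \in F^\times$, since $\Hom(A, B) \otimes \Q$ is one-dimensional over~$F$, and this replaces $q_f$ by $\alpha^2 q_f$ (using triviality of Rosati and commutativity of~$F$).  A short pullback calculation then shows that $(B_1, \mu_1)$ and $(B_2, \mu_2)$ lie in the same polarized isogeny class if and only if $q_{f_1}/q_{f_2} \in \Q^\times \cdot F^{\times 2}$: given a polarized isogeny $h \colon B_1 \to B_2$ with $h^*\mu_2 = m\mu_1$, the composition $h \circ f_1$ equals $f_2 \circ \alpha$ for some $\alpha \in F^\times$, and pulling $\mu_2$ back through~$A$ via both routes yields $m q_{f_1} = \alpha^2 q_{f_2}$; the converse is analogous after clearing a denominator.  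Hence polarized isogeny classes inside the isogeny class of~$A$ inject into
\[ F^{\succ 0} \big/ \bigl( \Q^{>0} \cdot F^{\times 2} \bigr). \]

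To exhibit infinitely many classes in this quotient, I will choose infinitely many rational primes $\ell_i$ that split in~$F$, pick generators $\pi_i$ of a principal power $\mathfrak{l}_i^{h_i}$ of a prime above~$\ell_i$, and multiply by units---replacing $\pi_i$ by a product $\pi_i \pi_j$ if $F$ has no unit of norm~$-1$---so as to obtain totally positive representatives~$q_i$.  A valuation argument at each~$\mathfrak{l}_i$, comparing $v_{\mathfrak{l}_i}$ and $v_{\bar{\mathfrak{l}}_i}$, which are equal on~$\Q^\times$ but unequal on the~$q_i$, will show that the $q_i$ are linearly independent in the $\F_2$-vector space $F^\times / (\Q^\times \cdot F^{\times 2})$.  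This produces infinitely many polarized isogeny classes.  The main step requiring care is the existence of a maximal isotropic subgroup of~$\ker(q\lambda)$ needed to realize an arbitrary totally positive~$q$ by a principally polarized~$B$; this must be verified in arbitrary characteristic to cover the full generality of the proposition, and is the principal obstacle I foresee.
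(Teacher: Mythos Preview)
Your approach is essentially the same as the paper's: associate to each principally polarized $(B,\mu)$ isogenous to $A$ the totally positive element $q \in \mathcal{O}_F$ with $f^*\mu = \lambda \circ q$, show that the polarized isogeny class of $(B,\mu)$ is determined by the image of $q$ in $F^{+,\times}/\Q^{+,\times}F^{\times 2}$, and then exhibit infinitely many classes in this quotient using split primes.

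Two small remarks.  First, the construction step you flag as the main obstacle---realizing an arbitrary totally positive $q$ by some principally polarized $(B,\mu)$---is exactly what the paper dispatches by citing \cite{mumford:abelian-varieties} p.~234 Corollary~1, which works over any algebraically closed field; your maximal-isotropic-subgroup formulation is essentially what that corollary does.  Second, your treatment of the infinitude of $F^{+,\times}/\Q^{+,\times}F^{\times 2}$ is more careful than the paper's: the paper simply asserts that for infinitely many split primes $p=(a_p)(a_p')$ one can take $a_p$ totally positive, which implicitly appeals to the narrow class group, whereas you handle the possible absence of a unit of norm $-1$ explicitly by passing to products $\pi_i\pi_j$.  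Either route works.
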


\begin{proof}
Let \( \fo_F = \End A \) and let \( F = \fo_F \otimes_\Z \Q \).

For each totally positive element \( q \in \fo_F \), there is a principally polarized abelian surface \( (A_q, \lambda_q) \) and an isogeny \( f_q \colon A \to A_q \) such that
\[ f_q^* \lambda_q = \lambda \circ q. \]
This can be seen by applying the proof of \cite{mumford:abelian-varieties} p.~234 Corollary~1 to \( A \), using a line bundle \( L \) associated with the polarisation \( \lambda \circ q \).

Suppose that for two totally positive elements \( q \) and \( r \in \fo_F \), there exists a polarized isogeny \( g \colon A_q \to A_r \).
By definition, we have
\[ g^* \lambda_r = n\lambda_q \]
for some \( n \in \Z \).
Letting \( u = f_r^{-1} g f_q \in \End A \otimes_{\Z} \Q \), we find that
\[ nq = u^\dag ru \]
where \( \dag \) is the Rosati involution of \( \End A \) induced by \( \lambda \).
In the case we are considering, where the endomorphism algebra is a real quadratic field, the Rosati involution is the identity.

We conclude that \( (A_q, \lambda_q) \) and \( (A_r, \lambda_r) \) are in the same polarized isogeny class if and only if there exist \( n \in \Z \) and \( u \in \fo_F \) such that
\[ nq = u^2 r, \]
or equivalently if and only if
\[ q/r \in \Q^\times F^{\times 2}. \]

It follows that \( q \mapsto (A_q, \lambda_q) \) is an injection from \( F^{+,\times} / \Q^{+,\times} F^{\times 2} \) to the set of polarized isogeny classes of principally polarized abelian varieties isogenous to~\( A \), where \( F^{+,\times} \) means the multiplicative group of totally positive elements of \( F \).

The group \( F^{+,\times} / \Q^{+,\times} F^{\times 2} \) is infinite because there are infinitely many rational primes which split in \( \fo_F \) as a product of two principal prime ideals
\[ (p) = (a_p)(a_p'). \]
In this splitting, we can always choose \( a_p \) totally positive, and the elements \( a_p \in \fo_F \) for different \( p \) are in different classes in \( F^{+,\times} / \Q^{+,\times} F^{\times 2} \).
\end{proof}

\section{Polarized Isogenies and Fourth Powers of Abelian Varieties} \label{sec:fourth-powers}

In this section we will prove \cref{isogeny-classes-become-polarized}, that is, if two principally polarized abelian varieties are isogenous then their fourth powers are in the same polarized isogeny class.
The proof uses \cref{fourth-power-hermitian-form}, which asserts that for all positive definite Hermitian forms over a division \( \Q \)-algebra with positive involution, the isometry class of the direct sum of four copies of the Hermitian form does not depend on the form we started with.

\subsection{Proof that \cref{fourth-power-hermitian-form} implies \cref{isogeny-classes-become-polarized}}

Let \( (A, \lambda) \) and \( (B, \mu) \) be principally polarized abelian varieties and let \( f \colon A \to B \) be an isogeny.
Let \( E = \End A \otimes_\Z \Q \).
Then \( E \) is a semisimple \( \Q \)-algebra equipped with a positive involution~\( \dag \), the Rosati involution with respect to the polarization~\( \lambda \).

Now \( f^* \mu \) is a polarization of \( A \) and so there is a symmetric endomorphism \( q \in E \) such that
\begin{equation} \label{eqn:def-q}
f^* \mu = \lambda \circ q.
\end{equation}

Furthermore, \( q \) is positive with respect to \( \dag \).
The positivity can be proved by adapting the proof of \cite{mumford:abelian-varieties} \S 21 Theorem~1:
if \( \lambda \) and \( \lambda \circ q \) are the polarizations associated with the divisors \( D \) and \( D_q \) respectively, and \( E^\lambda \) and \( E^{\lambda q} \) are the associated Riemann forms, then \cite{mumford:abelian-varieties} \S 20 Theorem~3 tells us that for any \( a \in \End A \),
\[ (E^{\lambda})^{\wedge (g-1)} \wedge a^*(E^{\lambda q}) = (D^{g-1} \mathbin{.} a^*(D_q)) \cdot v \]
for a suitable generator \( v \) of \( \Hom_{\Z_\ell} (\bigwedge^{2g} T_\ell A, \Z_\ell(g)) \).
Following the proof of \cite{mumford:abelian-varieties} \S 21 Theorem~1 we deduce that
\[ \Tr(a^\dag q a) = \frac{2g}{(D^g)} (D^{g-1} \mathbin{.} a^*(D_q)). \]
Because \( D \) is ample and \( a^*(D_q) \) is effective, this is positive for all \( a \in \End A - \{ 0 \} \).

We shall use \cref{fourth-power-hermitian-form} to prove that there exists \( u \in \rM_4(E) \) such that
\begin{equation} \label{eqn:q-u}
u^\dag \diag_4(q) u = 1.
\end{equation}
Once we have obtained such a \( u \), we can clear denominators by finding an integer~\( n \) such that \( nu \in \rM_4(\End A) \).
Thus in \( \rM_4(\End A) = \End(A^4) \), we have
\[ (nu)^\dag \diag_4(q) \, nu = n^2. \]
We can then carry out the following calculation in \( \Hom(A^4, A^{\vee 4}) \):
\begin{align*}
    n^2 \diag_4(\lambda)
  & = \diag_4(\lambda) \, (nu)^\dag \diag_4(q) \, nu
\\& = (nu)^\vee \diag_4(\lambda) \, \diag_4(q) \, nu
  && \text{(definition of Rosati involution)}
\\& = (nu)^* (\diag_4(\lambda q))
  && \text{(definition of \( (nu)^* \))}
\\& = (nu)^* (\diag_4(f^* \mu))
  && \text{(by \eqref{eqn:def-q})}
\\& = \bigl( \diag_4(f) \circ nu \bigr)^* (\diag_4(\mu)).
\end{align*}
Hence \( \diag_4(f) \circ nu \) is the desired polarized isogeny \( (A, \lambda)^4 \to (B, \mu)^4 \).

To prove that \eqref{eqn:q-u} has a solution,
write \( E \) as a direct product of simple \( \Q \)-algebras.
Because \( \dag \) is a positive involution, it stabilizes each simple factor of \( E \) and restricts to a positive involution of the factor.
Hence it will suffice to solve \eqref{eqn:q-u} independently in each factor and combine the solutions.

We therefore restrict to the case in which \( E \) is simple i.e.\ \( E = \End_D(V) \) for some division algebra \( D \) and some right \( D \)-module~\( V \).
Using \cref{positive-involutions-restrict}, choose a positive involution \( * \) of~\( D \) and a positive definite \( (D, *) \)-Hermitian form \( \psi \colon V \times V \to D \) such that \( \dag \) is the associated adjoint involution.

Let \( \psi_q \colon V \times V \to D \) be defined by
\[ \psi_q(v, w) = \psi(v, qw). \]
By \cref{positive-definite-form-multiply}, this is also a positive definite \( (D, *) \)-Hermitian form.
So by \cref{fourth-power-hermitian-form}, \( \psi_q^{\oplus 4} \) is isometric to \( \psi^{\oplus 4} \), or equivalently, there is a solution to \eqref{eqn:q-u}.

\subsection{Proof of \cref{fourth-power-hermitian-form}}

We are given a division algebra \( D \) over \( \Q \) with a positive involution~\( * \) and two positive definite \( (D, *) \)-Hermitian forms \( \psi_1, \psi_2 \colon V \times V \to D \).
We have to show that \( \psi_1^{\oplus 4} \) and \( \psi_2^{\oplus 4} \) are isometric.

We split the proof into cases depending on the type of \( (D, *) \) in the Albert classification of division algebras with positive involution (see \cite{mumford:abelian-varieties} \S 21 Theorem~2).
In each case we use the classification of \( (D, *) \)-Hermitian forms from chapter~10 of~\cite{scharlau:quadratic-forms}.

Note that \( \psi_1^{\oplus 4} \) and \( \psi_2^{\oplus 4} \) trivially have the same dimension.
They also always have the same signatures at all real places because they are assumed to be positive definite.

\subsubsection*{Type I}
\( D \) is a totally real number field and the involution \( * \) is trivial, so \( (D, *) \)-Hermitian forms are just quadratic forms over \( D \).
Isometry classes of quadratic forms over a number field \( D \) are classified by their dimension, their determinant in \( D^\times/D^{\times 2} \), their Hasse invariant in \( \Br D \) and their signatures at real places of \( D \) (\cite{scharlau:quadratic-forms} Corollary~6.6.6).

The determinant of \( \psi_1^{\oplus 4} \) is the fourth power of \( \det \psi_1 \), so is in \( D^{\times 2} \), and similarly for \( \psi_2^{\oplus 4} \).

It remains to show that the Hasse invariants \( s(\psi_1^{\oplus 4}) \) and \( s(\psi_2^{\oplus 4}) \) are the same.
We shall prove this by proving that \( s(\psi_1^{\oplus 4}) \) is the trivial element of \( \Br D \); the same proof shows that \( s(\psi_2^{\oplus 4}) \) is also trivial.

To prove that \( s(\psi_1^{\oplus 4}) \) is trivial, we will use Lemma~2.12.6 from \cite{scharlau:quadratic-forms}.
This says that
\[ s(\phi \oplus \psi) = s(\phi) s(\psi) \sigma(\det \phi, \det \psi) \]
for any quadratic forms \( \phi \) and \( \psi \) over \( D \), where \( \sigma \colon D^\times/D^{\times 2} \times D^\times/D^{\times 2} \to \Br D \) denotes the Hilbert symbol.
In our case, we deduce that
\[ s(\psi_1^{\oplus 4}) = s(\psi_1^{\oplus 2})^2 \, \sigma(\det \psi_1^{\oplus 2}, \det \psi_1^{\oplus 2}). \]
Since \( s \) takes values in the \( 2 \)-torsion subgroup of \( \Br D \), \( s(\psi_1^{\oplus 2})^2 \) is trivial.
Since \( \det \psi_1^{\oplus 2} \) is a square,
\[ \sigma(\det \psi_1^{\oplus 2}, \det \psi_1^{\oplus 2}) = 1. \]
Hence \( s(\psi_1^{\oplus 4}) \) is trivial.

\subsubsection*{Type II}
\( D \) is a totally indefinite quaternion algebra whose centre is a totally real number field~\( F \) and \( * \) is an orthogonal involution.

There is a localization map on the Witt group of \( (D, *) \)-Hermitian forms
\[ r \colon W(D, *) \to \prod_\fp W(D_\fp, *) \]
where the product on the right hand side runs over all places of \( F \), but this map is not injective.
We will first show that \( [\psi_1^{\oplus 2}] - [\psi_2^{\oplus 2}] \) is in the kernel of \( r \), then use the fact that \( \ker r \) has exponent \( 2 \).

For each non-archimedean place \( \fp \) of \( F \), we first note that by \cite{scharlau:quadratic-forms} Remark~7.6.7 the classification of \( (D_\fp, *) \)-Hermitian forms is equivalent to the classification of \( (D_\fp, \emptybar) \)-skew-Hermitian forms, where \( \emptybar \) denotes the canonical involution of \( D_\fp \).

Hence we can apply \cite{scharlau:quadratic-forms} Theorem~10.3.6: non-singular \( (D_\fp, \emptybar) \)-skew-Hermitian forms are classified by their dimension and their determinant in \( F^\times / F^{\times 2} \).
The determinants of \( \psi_1^{\oplus 2} \) and of \( \psi_2^{\oplus 2} \) are both squares, so \( \psi_1^{\oplus 2} \) and \( \psi_2^{\oplus 2} \) are locally isometric at every non-archimedean place.

At each archimedean place \( \fp \) of~\( F \), \( (D_\fp, *) \cong (\rM_2(\R), \text{transpose}) \) so \( (D_\fp, *) \)-Hermitian forms on \( D_\fp^n \) are just quadratic forms on \( \R^{2n} \).
Hence they are classified by their dimension and signature.
Thus \( \psi_1^{\oplus 2} \) and \( \psi_2^{\oplus 2} \) are locally isometric at archimedean places.

Hence
\[ [\psi_1^{\oplus 2}] - [\psi_2^{\oplus 2}] \in \ker r. \]

According to \cite{lewis:skew-hermitian-forms} Proposition~3,
\[ \ker r \cong (\Z/2\Z)^{s-2} \]
where \( s \) is the number of places of \( F \) at which \( D \) is non-split.
In fact the statement of \cite{lewis:skew-hermitian-forms} Proposition~3 only tells us the order of \( \ker r \), not its precise group structure.
However the group structure can be deduced from the proof of \cite{lewis:skew-hermitian-forms} Proposition~3 or by using the fact that section~4 of \cite{lewis:skew-hermitian-forms} exhibits an explicit homomorphism from \( \ker r \) into a quotient of \( (\Z/2)^s \).

In particular \( \ker r \) has exponent \( 2 \) and so
\[ [\psi_1^{\oplus 4}] - [\psi_2^{\oplus 4}] = 2 \bigl( [\psi_1^{\oplus 2}] - [\psi_2^{\oplus 2}] \bigr) = 0 \]
in \( W(D, *) \).
Since \( \psi_1^{\oplus 4} \) and \( \psi_2^{\oplus 4} \) represent the same element of the Witt group and have the same dimension, they are isometric.

\subsubsection*{Type III}
\( D \) is a totally definite quaternion algebra whose centre is a totally real number field~\( F \) and \( * \) is the canonical involution of \( D \).

According to \cite{scharlau:quadratic-forms} Examples~10.1.8, \( (D, *) \)-Hermitian forms are classified by their dimension and signatures at all real places of \( F \).
As remarked above, this implies that \( \psi_1^{\oplus 4} \) and \( \psi_2^{\oplus 4} \) are isometric.

\subsubsection*{Type IV}
\( D \) is a division algebra whose centre is a CM field \( F \) and \( * \) is an involution of the second kind.
Let \( F_0 \) be the fixed field of \( * \) in \( F \).

By \cite{scharlau:quadratic-forms} Corollary~10.6.6, \( (D, *) \)-Hermitian forms are classified by their dimension, their determinant in \( F_0^\times / \Nm_{F/F_0}(F^\times) \) and their signatures at all real places of \( F_0 \) which do not decompose in \( F \).
In our case \( F \) is a CM field so all real places of \( F_0 \) decompose in \( F \) and the signature condition is empty.

The determinants \( \det(\psi_1^{\oplus 4}) \) and \( \det(\psi_2^{\oplus 4}) \) are squares in \( F_0^\times \) and hence are in \( \Nm_{F/F_0}(F^\times) \) as required.
So \( \psi_1^{\oplus 4} \) and \( \psi_2^{\oplus 4} \) are isometric.

This completes the proof of \cref{fourth-power-hermitian-form}.

\section{Bound for the Degree of Polarized Isogenies} \label{sec:degree-bound}

In this section we prove \cref{polarized-isogeny-bound} and \cref{arith-bound-global}, on the existence of polarized isogenies of polynomially bounded degree.
We will begin with a technical definition of norms on semisimple algebras, then explain how \cref{arith-bound-global} implies \cref{polarized-isogeny-bound} and give an outline of the proof of \cref{arith-bound-global} before we go through all the details of the latter proof.

\subsection{Norms in semisimple algebras}

We define a \defterm{norm} on a semisimple \( \Q \)-algebra \( E \) to be a function \( \Nm_E \colon E \to \Q \) which has the form
\[ \Nm_E(x) = \prod_i \abs{\Nm_{F_i/\Q}(\Nrd_{E_i/F_i}(x_i))}^{\gamma_i} \]
for some positive integers \( \gamma_i \), where \( E = \prod_i E_i \) as a product of simple algebras and \( F_i \) is the centre of \( E_i \).
The \defterm{rank} of the norm is defined to be the integer \( d \) such that
\[ \Nm_E(x) = \abs{x}^d \text{ for } x \in \Q. \]
We say that the norm \( \Nm_E \) is \defterm{\( \dag \)-compatible} if \( \Nm_E \circ \dag = \Nm_E \), where \( \dag \) is an involution of \( E \) (in other words, this requires that \( \gamma_i = \gamma_j \) whenever \( \dag \) exchanges the simple factors \( E_i \) and \( E_j \)).

The purpose of this definition is that ``degree'' is an example of such a norm on the endomorphism algebra of an abelian variety.
In particular we have to allow the exponents \( \gamma_i \) to be greater than \( 1 \) and to depend on \( i \) in order for this to hold for all abelian varieties.

This definition has the following obvious properties:
\begin{enumerate}
\item \( \Nm_E(x) > 0 \) for all \( x \in E^\times \).
\item \( \Nm_E(xy) = \Nm_E(x) \Nm_E(y) \) for all \( x, y \in E \).
\item \( \Nm_E(x) \in \Z \) if \( x \) is an element of an order in \( E \).
\item \( \Nm_E(x) = 1 \) if \( x \) is a unit in an order in \( E \).
\end{enumerate}

\begin{lemma} \label{norm-times-inverse}
Let \( E \) be a semisimple \( \Q \)-algebra, \( R \subset E \) an order and \( \Nm_E \colon E \to \Q \) a norm.

For all \( x \in R - \{ 0 \} \), \( \Nm_E(x) x^{-1} \in R \).
\end{lemma}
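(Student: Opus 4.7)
The plan is to apply Cayley--Hamilton to the minimal polynomial of $x$ over $\Q$ and compare its constant term to $\Nm_E(x)$. If $x\notin E^\times$ then some $\Nrd_{E_i/F_i}(x_i)=0$, so $\Nm_E(x)=0$ and the statement is trivial; so assume $x\in E^\times$.

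Since $R$ is a $\Z$-order, $x$ is integral over $\Z$, hence its minimal polynomial over $\Q$,
\[p(t)=t^m+c_{m-1}t^{m-1}+\cdots+c_1t+c_0,\]
lies in $\Z[t]$, and $c_0\neq 0$ because $x$ is invertible. Rearranging $p(x)=0$ yields $c_0\,x^{-1}=-(x^{m-1}+c_{m-1}x^{m-2}+\cdots+c_1)\in\Z[x]\subset R$, so $|c_0|\,x^{-1}\in R$. The lemma will then follow from the divisibility $|c_0|\mid\Nm_E(x)$ in $\Z$, via $\Nm_E(x)\,x^{-1}=(\Nm_E(x)/|c_0|)\cdot(|c_0|\,x^{-1})\in\Z\cdot R=R$.

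For the divisibility I would compare eigenvalue multiplicities. Decompose $E=\prod_iE_i$ with $E_i$ simple of center $F_i$, and for each embedding $\sigma\colon F_i\hookrightarrow\bar\Q$ view $\sigma(x_i)$ as a matrix in $\rM_{n_id_i}(\bar\Q)$. For a Galois orbit $O\subset\bar\Q$ of algebraic numbers, let $e_O$ be the common multiplicity of its elements as roots of $p$, let $e_{O,i}$ be the analogous quantity for the rational minimal polynomial $p_i$ of $x_i$, and let $f_{O,i}$ be the common total multiplicity of its elements as eigenvalues of $\sigma(x_i)$ summed over $\sigma$ (all constant on orbits by Galois equivariance). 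Writing $N_O=\prod_{\alpha\in O}\alpha\in\Z$ for the orbit norm,
\[|c_0|=\prod_O|N_O|^{e_O},\qquad|\Nm_{F_i/\Q}(\Nrd(x_i))|=\prod_O|N_O|^{f_{O,i}}.\]
Since $p$ is the least common multiple in $\Q[t]$ of the $p_i$, $e_O=\max_ie_{O,i}$; and since in any one matrix the multiplicity of an eigenvalue in the minimal polynomial never exceeds its multiplicity in the characteristic polynomial, $e_{O,i}\le f_{O,i}$. Combining these with $\gamma_i\ge 1$ gives
\[e_O=\max_ie_{O,i}\le\max_if_{O,i}\le\sum_if_{O,i}\le\sum_i\gamma_if_{O,i},\]
so $\Nm_E(x)/|c_0|=\prod_O|N_O|^{\sum_i\gamma_if_{O,i}-e_O}$ is a product of non-negative integer powers of rational integers, hence lies in $\Z$.

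The main obstacle is carrying out the eigenvalue bookkeeping in the generality of non-split division algebras $D_i$ and non-rational centers $F_i$. A tempting shortcut would be to apply Cayley--Hamilton directly to the regular representation $L_x\colon R\to R$ over $\Z$, obtaining $\det(L_x)\,x^{-1}\in R$ with $|\det(L_x)|=\prod_i|\Nm_{F_i/\Q}(\Nrd(x_i))|^{n_id_i}$; but $\Nm_E(x)/|\det(L_x)|$ is an integer only when every $\gamma_i\ge n_id_i$, which the hypothesis does not force, so the more delicate minimal-polynomial argument seems genuinely necessary.
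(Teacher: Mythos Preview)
Your argument is correct, and the eigenvalue bookkeeping you worry about does go through: once one notes that the minimal polynomial of $x_i$ over $\Q$ coincides with its minimal polynomial over $\bar\Q$ (both annihilate $x_i\otimes 1$, and the latter is Galois-stable hence already rational), it equals the lcm over embeddings $\sigma$ of the minimal polynomials of the matrices $\sigma(x_i)$, and then the chain $e_O=\max_i e_{O,i}\le\max_i f_{O,i}\le\sum_i\gamma_i f_{O,i}$ follows from the standard fact that the multiplicity of an eigenvalue in the minimal polynomial of a single matrix never exceeds its multiplicity in the characteristic polynomial.

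The paper's proof is shorter because it chooses a better annihilating polynomial. Instead of the minimal polynomial, it uses the product $P_x(T)=\prod_i P_{x,i}(T)$, where $P_{x,i}$ is determined by $Q_{x,i}=P_{x,i}^{n_i}$ with $Q_{x,i}$ the characteristic polynomial over $\Q$ of left multiplication by $x_i$ on $E_i$ and $n_i^2=\dim_{F_i}E_i$ (equivalently, $P_{x,i}$ is the $\Q$-norm of the reduced characteristic polynomial of $x_i$ over $F_i$). This $P_x$ still has integer coefficients and still annihilates $x$, so the same Cayley--Hamilton rearrangement gives $a_0 x^{-1}\in R$; but now the constant term is \emph{exactly} $a_0=\pm\prod_i\Nm_{F_i/\Q}(\Nrd_{E_i/F_i}(x_i))$, so that $\Nm_E(x)/|a_0|=\prod_i|\Nm_{F_i/\Q}(\Nrd(x_i))|^{\gamma_i-1}$ is visibly an integer just from $\gamma_i\ge 1$, with no eigenvalue counting needed. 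In effect the paper's polynomial sits between your minimal polynomial and the full regular-representation characteristic polynomial you correctly rejected, landing at the spot where the constant term matches the base norm on the nose.
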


\begin{proof}
Define the reduced characteristic polynomial \( P_x(T) \in \Q[T] \) of \( x \in E \) as follows.
Let \( E = \prod_i E_i \) as a product of simple algebras, and let \( F_i \) be the centre of \( E_i \).
Let \( Q_{x,i}(T) \) be the characteristic polynomial over \( \Q \) of \( x_i \) acting on \( E_i \) by left multiplication.
If \( \dim_{F_i} E_i = n_i^2 \), then \( Q_{x,i}(T) = P_{x,i}(T)^{n_i} \) for some polynomial \( P_{x,i}(T) \in \Q[T] \).
We define
\[ P_x(T) = \prod_i P_{x,i}(T). \]

Label the coefficients of \( P_x \) as 
\[ P_x(T) = T^n + a_{n-1} T^{n-1} + \dotsb + a_1 T + a_0. \]
Since \( P_x(x) = 0 \), we get
\begin{equation} \label{eqn:rearranged-char-poly}
-a_0 x^{-1} = x^{n-1} + a_{n-1} x^{n-2} + \dotsb + a_1.
\end{equation}
Since \( x \in R \), the coefficients of \( P_x \) are all in~\( \Z \).
Hence the right hand side of \eqref{eqn:rearranged-char-poly} is in~\( R \).
We deduce that \( a_0 x^{-1} \in R \).

The definition of reduced norms implies that
\[ a_0 = \pm \prod_i \Nm_{F_i/\Q}(\Nrd_{E_i/F_i}(x_i)). \]
Since \( \Nm_{F_i/\Q}(\Nrd_{E_i/F_i}(x_i)) \in \Z \) and \( \gamma_i \geq 1 \) for all \( i \), we deduce that
\[ \Nm_E(x) a_0^{-1} = \pm \prod_i \Nm_{F_i/\Q}(\Nrd_{E_i/F_i}(x_i))^{\gamma_i-1} \]
is an integer.

We conclude that
\[ \Nm_E(x) x^{-1} = (\Nm_E(x) a_0^{-1}) (a_0 x^{-1}) \in R. \qedhere \]
\end{proof}

We also make a local analogue of the above definition of norms.
We define a \defterm{norm} on a semisimple \( \Q_p \)-algebra \( E_p \) to be a function \( \Nm_{E_p} \colon E_p \to \Q \) of the form
\[ \Nm_{E_p}(x) = \prod_i \abs{\Nm_{F_i/\Q_p}(\Nrd_{E_i/F_i}(x_i))}_p^{-\gamma_i} \]
for some positive integers \( \gamma_i \), where \( E = \prod_i E_i \) as a product of simple algebras and \( F_i \) is the centre of \( E_i \).
The \defterm{rank} of \( \Nm_{E_p} \) is defined to be the positive integer \( d \) such that
\[ \Nm_{E_p}(x) = \abs{x}_p^{-d} \text{ for all } x \in \Q_p. \]

Note that the exponents in the definition of a local norm are negative.
This is because \( \abs{x}_p \leq 1 \) when \( x \) is a \( p \)-adic integer, and so local norms \( \Nm_{E_p} \) satisfy property~(3) above.
Indeed, it is simple to check that all of properties (1)--(4) above and \cref{norm-times-inverse} hold for a local norm \( \Nm_{E_p} \).

Furthermore, if \( \Nm_E \) is a norm on a semisimple \( \Q \)-algebra \( E \), then the extensions of \( \Nm_E \) to the localizations \( E \otimes_\Q \Q_p \) satisfy
\[ \Nm_E(x) = \prod_p \Nm_{E \otimes_\Q \Q_p}(x) \text{ for all } x \in E. \]

\subsection{Proof that \cref{arith-bound-global} implies \cref{polarized-isogeny-bound}}

We are given principally polarized abelian varieties \( (A, \lambda) \) and \( (B, \mu) \) and isogenies \( f, g \colon A \to B \) such that \( f \) is a polarized isogeny and \( \deg g = n \).
We want to prove the existence of a polarized isogeny \( h \colon A \to B \) of degree at most \( cn^k \), where \( c \) and \( k \) depend only on~\( (A, \lambda) \).

We will apply \cref{arith-bound-global} to \( R = \End A \) and \( E = R \otimes_\Z \Q \), with \( \dag \) being the Rosati involution with respect to the polarization \( \lambda \).
The norm is given by \( \Nm_E(a) = \deg a \) for \( a \in \End A \) (this is defined to be \( 0 \) if \( a \) is not an isogeny), extended homogeneously to \( E \) i.e.
\[ \Nm_E(a) = \deg(na)/n^{2\dim A} \text{ where } n \text{ is a non-zero integer such that } na \in \End A. \]
By \cite{milne:abelian-varieties-old} Proposition~12.12, this is a norm on \( E \) as defined above, with degree \( 2 \dim A \).

Set
\[ a = g^{-1} f \in \End A \otimes_\Z \Q. \]

Let \( q \) be an element of \( \End A \) such that \( g^* \mu = \lambda \circ q \).
A calculation shows that
\begin{align*}
    \lambda a^\dag qa
	 = a^\vee \lambda qa
	 = a^*(\lambda q)
	 = a^*(g^* \mu)
	 = (ga)^*(\mu)
	 = f^* \mu.
\end{align*}
Hence the fact that \( f \) is a polarized isogeny implies that
\[ a^\dag qa \in \Q^\times. \]
We can therefore apply \cref{arith-bound-global} to obtain \( b \in \End A \) such that
\[ b^\dag q b \in \Z - \{ 0 \} \text{ and } \Nm_E(b) \leq c\Nm_E(q)^{d-1/2}. \]

The fact that \( b^\dag q b \in \Z - \{ 0 \} \) implies that \( h = g \circ b \) is a polarized isogeny \( A \to B \).

The definition of \( q \) implies that
\[ \Nm_E(q) = (\deg g)^2 = n^2 \]
and so the bound from \cref{arith-bound-global} gives
\[ \deg h = n \Nm_E(b) \leq c n \Nm_E(q)^{d-1/2} = c n (n^2)^{d-1/2} = cn^{2d} \]
where \( d = 2\dim A \).

\subsection{Outline of the proof of \cref{arith-bound-global}} \label{ssec:outline}

Before we come to the proof of \cref{arith-bound-global} in general, we will first look at the case where \( E \) is a number field and \( R \) is its ring of integers.
We will sketch a proof that in this case, there is some \( b \in R \) satisfying \( b^\dag qb \in \Z - \{ 0 \} \) and whose norm is bounded by some polynomial in \( \Nm_E(q) \), but we will not seek to optimize the bound.
Indeed this sketch will give a weaker exponent than is stated in \cref{arith-bound-global}.

We begin by looking for an ideal instead of an element of \( R \) which satisfies the conclusion of \cref{arith-bound-global}.
In other words, we look for an ideal \( \fb \subset R \) which has suitably bounded norm and which satisfies
\begin{equation} \label{eqn:ideals}
\fb^\dag q \fb = mR \text{ for some } m \in \Z.
\end{equation}

Take \( a \) as in the hypothesis of \cref{arith-bound-global}.
Multiplying it by a rational integer, we may assume without loss of generality that \( a \in R \).
Then the principal ideal \( aR \) satisfies \eqref{eqn:ideals}, showing that the set of ideals \( \fb \) which satisfy \eqref{eqn:ideals} is non-empty.

In order to find a solution to \eqref{eqn:ideals} with small norm, we work locally in \( R_p = R \otimes_\Z \Z_p \) for each rational prime \( p \), looking at ideals \( \fb_p \subset R_p \) which satisfy
\begin{equation} \label{eqn:ideals-local}
\fb_p^\dag q \fb_p = mR_p \text{ for some } m \in \Z_p.
\end{equation}
If \( qR \) is coprime to \( pR \), then clearly \( \fb_p = R_p \) satisfies \eqref{eqn:ideals-local}.
So we only need to consider the finitely many primes \( p \) for which \( pR \) is not coprime to \( qR \).

Denote the factorizations into prime ideals in \( R_p \) of \( pR_p \) and of \( qR_p \) by
\[ pR_p = \fp_1^{e_1} \dotsm \fp_r^{e_r}
\text{ and }
qR_p = \fp_1^{k_1} \dotsm \fp_r^{k_r}. \]
Given an ideal \( \fb_p \subset R_p \) satisfying \eqref{eqn:ideals-local}, let its prime factorization be
\[ \fb_p = \fp_1^{\beta_1} \dotsm \fp_r^{\beta_r}. \]

If \( p \) divides \( \fb_p \), then we can replace \( \fb_p \) by \( p^{-1} \fb_p \) and it will still be an ideal of \( R_p \) satisfying \eqref{eqn:ideals-local}.
Hence we can assume that \( p \) does not divide \( \fb_p \).
This implies that \( \beta_i < e_i \) for some \( i \).

Suppose that \( \fb_p^\dag q \fb_p = p^t R_p \).
Then
\[ t = (2\beta_i + k_i)/e_i \text{ for all } i. \]
Applying this for an \( i \) at which \( \beta_i < e_i \), we get
\[ t < 2 + \max(k_1, \dotsc, k_r) \leq 2 + v_p(\Nm(qR_p)). \]
A calculation then shows that
\[ \Nm(\fb_p) < p^d \Nm(qR_p)^{(d-1)/2} \]
where \( d = [E:\Q] \).
Since we are assuming that \( qR \) is not coprime to \( pR \), \( \Nm(qR_p) \geq p \) and so this implies
\[ \Nm(\fb_p) < \Nm(qR_p)^{(3d-1)/2}. \]

Letting \( \fb \) be the product of the ideals \( \fb_p \cap R \), we get an ideal of \( R \) which satisfies \eqref{eqn:ideals} and such that
\[ \Nm(\fb) \leq \abs{\Nm_E(q)}^{(3d-1)/2}. \]

Using finiteness of the class group, we may replace the ideal \( \fb \) by a principal ideal at the cost of a constant factor in the norm bound.
Thus there are \( b \in R \), \( u \in R^\times \) and \( m \in \Z \) such that
\[ b^\dag q b = um. \]
Using the fact that \( R^\times \) is finitely generated, we can remove the unit~\( u \) at the cost of another constant factor.

\medskip

The argument sketched above relies on \( E \) being a field.
When \( E \) is not a field, our proof will have the same local-global structure, but we will work with adèles instead of ideals.
We will begin by proving a local version of \cref{arith-bound-global} for all primes \( p \), namely \cref{arith-bound-local-non-split}.
However, this local version, with a constant \( c_p \) for each prime \( p \), is not sufficient to deduce a global result: we need to know that the constants \( c_p \) are \( 1 \) for almost all~\( p \).
This is given by \cref{arith-bound-local-split}.
Once we have these two local results, we will then use the adelic version of finiteness of the class group to obtain \cref{arith-bound-global}.

The local results \cref{arith-bound-local-non-split,arith-bound-local-split} are results about lattices and Hermitian forms over division algebras over local fields.
(In the commutative case sketched above, the relevant hermitian forms are on \( 1 \)-dimensional vector spaces, and so can simply be described by scalars.)

In the case of \cref{arith-bound-local-non-split}, our proof does not use hermitian forms explicitly.
However it uses the \( p \)-adic polar decomposition of Benoist and Oh~\cite{benoist-oh:polar}, which can be seen as a generalization of the diagonalization of quadratic forms over a field.

In the proof of \cref{arith-bound-local-split}, we work directly with Hermitian forms.
This corollary applies only to primes at which \( E_p \) is split, so we only need to consider Hermitian forms over a field instead of over a division algebra.
We get the necessary integrality ingredients by using properties of maximal lattices.

\subsection{Local calculations -- non-split case}

We will prove the local version of \cref{arith-bound-global}, valid for all primes \( p \) but with a non-trivial constant \( c_p \) for every prime \( p \).
The exponent \( (d-1)/2 \) in this local result (\cref{arith-bound-local-non-split}) is better than the \( d-1/2 \) of \cref{arith-bound-global} but this is not important -- the weaker exponent in \cref{arith-bound-global} comes from \cref{arith-bound-local-split}.
Our primary ingredient is the \( p \)-adic polar decomposition of Benoist and Oh (\cite{benoist-oh:polar} Theorem~1.1), applied to the element \( a \) such that \( a^\dag qa \in \Q_p^\times \).

Here is an outline of the proof of \cref{arith-bound-local-non-split}.
We rearrange the hypothesis \( a^\dag qa \in \Q_p^\times \) to obtain \( q^{-1} \in \Q_p^\times aa^\dag \).
The \( p \)-adic polar decomposition allows us to write \( a = ksh \) where \( s \) is fixed by \( \dag \) and is in one of a fixed finite collection of commutative subalgebras of \( E_p \).
We can easily deal with \( k \) and \( h \), so that instead of \( aa^\dag \) we have to look at \( s^2 \).
(In the case where \( E_p \) is a matrix algebra over a field and \( \dag \) is the transpose involution, replacing \( aa^\dag \) by \( s^2 \) corresponds to diagonalizing the quadratic form with matrix \( aa^\dag \).)

Some calculations allow us to construct a \( \Q_p \)-multiple \( us \) of \( s \) such that \( (us)^2 \) is in the order \( R_p \) and has bounded norm.
The fact that \( s \) is in one of a fixed set of commutative subalgebras of \( E \) allows us to deduce that a bounded multiple of \( us \) is in \( R_p \).
Reversing the calculations finishes the proof.

We will need the following lemma once we know that \( us \) is in a fixed commutative subalgebra and that \( (us)^2 \) is in \( R_p \).
The key point in the proof of \cref{square-root-integer-in-torus} is that the (unique) maximal order in a commutative algebra is integrally closed.

\begin{lemma} \label{square-root-integer-in-torus}
Let \( E_p \) be a semisimple \( \Q_p \)-algebra and \( R_p \subset E_p \) an order.
Let \( L \subset E_p \) be a commutative \( \Q_p \)-subalgebra.

There is a positive rational integer \( c \) depending on \( E_p \), \( R_p \) and \( L \) such that: for all \( x \in L \), if \( x^2 \in R_p \) then \( cx \in R_p \).
\end{lemma}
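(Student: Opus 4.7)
My plan follows the hint: use that the maximal order $\cO_L$ of $L$ is integrally closed to conclude $x \in \cO_L$ from $x^2 \in R_p$, and then pass from $\cO_L$ to $R_p$ by a lattice comparison.

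First, I would decompose the commutative semisimple $\Q_p$-algebra $L$ as $L \cong \prod_i L_i$ with each $L_i$ a finite field extension of $\Q_p$, and set $\cO_L = \prod_i \cO_{L_i}$, which is both the unique maximal order of $L$ and the integral closure of $\Z_p$ in $L$. Suppose $x \in L$ satisfies $x^2 \in R_p$. Since $R_p$ is an order, every element of $R_p$ is integral over $\Z_p$: left multiplication by the element gives a $\Z_p$-linear endomorphism of the finitely generated $\Z_p$-module $R_p$, whose characteristic polynomial (monic, over $\Z_p$) is then satisfied by the element itself. Picking $P(T) = T^n + a_{n-1} T^{n-1} + \dotsb + a_0 \in \Z_p[T]$ with $P(x^2) = 0$, the polynomial $P(T^2) \in \Z_p[T]$ is again monic and vanishes on $x$, so $x$ is integral over $\Z_p$ and hence $x \in \cO_L$.

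It remains to produce a positive integer $c$ with $c \cO_L \subset R_p$. Both $\cO_L$ and $R_p \cap L$ are $\Z_p$-lattices in $L$: each is a finitely generated $\Z_p$-submodule of $L$ (the second as a submodule of the finitely generated $\Z_p$-module $R_p$), and each spans $L$ over $\Q_p$ (for $R_p \cap L$, because every $y \in L \subset E_p$ has $ny \in R_p$ for some positive integer $n$, since $R_p$ spans $E_p$ over $\Q_p$). The standard comparison of two $\Z_p$-lattices in a finite-dimensional $\Q_p$-space produces a positive integer $c$, depending only on $\cO_L$ and $R_p \cap L$, with $c\cO_L \subset R_p \cap L$, and then $cx \in R_p$.

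The one delicate point is the semisimplicity of $L$, which is what makes the integral closure $\cO_L$ a finitely generated $\Z_p$-module; for a general commutative $L$ containing nilpotents the statement would fail (a nilpotent $x$ with $x^2 = 0$ could be scaled arbitrarily while keeping $x^2 \in R_p$). In the paper this is not an issue because the relevant $L$ arises from the Benoist--Oh polar decomposition as a Cartan-type subalgebra, hence is \'etale.
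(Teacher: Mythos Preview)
Your proof is correct and takes essentially the same approach as the paper: show that \( x \) lies in the maximal order \( \cO_L \) by integral closedness, then use that \( R_p \cap L \) has finite index in \( \cO_L \) (the paper phrases the latter via openness of \( R_p \) in \( E_p \), you via a lattice comparison, but these are equivalent). Your remark on the necessity of semisimplicity of \( L \) is apt---the paper makes the same implicit assumption when it speaks of \emph{the} maximal order of \( L \), and as you note, in the application \( L \) is generated by a split torus and hence is \'etale.
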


\begin{proof}
Let \( \fo_L \) denote the maximal order in \( L \).

\( R_p \cap L \) is a \( \Z_p \)-subalgebra of \( L \) which is finitely generated as a \( \Z_p \)-module, so it is contained in \( \fo_L \).
\( R_p \cap L \) is also open in \( L \) (because \( R_p \) is open in \( E_p \)), so it has finite index in \( \fo_L \).
Hence there is \( c \in \N \) such that
\[ c\fo_L \subset R_p \cap L. \]

Now if \( x \in L \) and \( x^2 \in R_p \), then \( x^2 \in \fo_L \).
Since \( \fo_L \) is integrally closed, it follows that \( x \in \fo_L \) and so \( cx \in R_p \cap L \).
\end{proof}

Let us recall the \( p \)-adic polar decomposition of Benoist and Oh.
Note that we use a different definition of involution of a group from \cite{benoist-oh:polar}: for us, an involution reverses the order of multiplication, while in \cite{benoist-oh:polar} an involution preserves the order of multiplication.
Hence \( \dag \colon G \to G \) is an involution in our sense if and only if \( g \mapsto (g^\dag)^{-1} \) is an involution in the sense of \cite{benoist-oh:polar}.
This leads to the cosmetic differences between the definitions of \( \gH \) and of \( (\Q_p, \dag) \)-split tori given below and those in \cite{benoist-oh:polar}.

Let \( \gG \) be a connected reductive algebraic group over \( \Q_p \), let \( \dag \) be an involution of \( \gG \), and let \( \gH \) be the algebraic subgroup
\[ \gH = \{ h \in \gG \mid hh^\dag = 1 \}. \]
We say that a torus \( \gS \subset \gG \) is \defterm{\( (\Q_p, \dag) \)-split} if \( \gS \) is split over \( \Q_p \) and \( s^\dag = s \) for all \( s \in S \).
By a theorem of Helminck and Wang~\cite{helminck-wang} there are finitely many \( \gH(\Q_p) \)-conjugacy classes of maximal \( (\Q_p, \dag) \)-split tori in \( \gG \).
Choose representatives~\( \gS_i \) for these \( \gH(\Q_p) \)-conjugacy classes of maximal \( (\Q_p, \dag) \)-split tori.

Theorem~1.1 of~\cite{benoist-oh:polar} asserts that there exists a compact subset \( K \subset \gG(\Q_p) \) such that
\begin{equation} \label{eqn:polar-decomposition-group}
\gG(\Q_p) = K \left( \bigcup_i \gS_i(\Q_p) \right) \gH(\Q_p).
\end{equation}

\begin{lemma} \label{arith-bound-local-non-split}
Let \( (E_p, \dag) \) be a semisimple \( \Q_p \)-algebra with involution, \( R_p \subset E_p \) a \( \dag \)-stable order, and \( \Nm_{E_p} \) a \( \dag \)-compatible norm on \( E_p \) of rank~\( d \).

There exists a constant \( c_p \) depending only on \( (R_p, \dag, \Nm_{E_p}) \) such that:

For every \( q \in R_p \), if there exists \( a \in E_p \) such that \( a^\dag q a \in \Q_p^\times \),
then there exists \( b \in R_p \) such that
\[ b^\dag q b \in \Z_p - \{ 0 \} \text{ and } \Nm_{E_p}(b) \leq c_p\Nm_{E_p}(q)^{(d-1)/2}. \]
\end{lemma}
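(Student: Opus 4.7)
The plan is to apply the $p$-adic polar decomposition of Benoist--Oh to the element $a$, thereby replacing $aa^\dag$ by the square of a symmetric element lying in one of finitely many commutative subalgebras, where \cref{square-root-integer-in-torus} can take over. Take $\gG$ to be the algebraic group of units of $E_p$ (connected reductive since $E_p$ is semisimple) and extend $\dag$ to an involution of~$\gG$. Applying~\eqref{eqn:polar-decomposition-group} to the pair $(\gG, \dag)$ writes
\begin{equation*}
a = ksh, \quad k \in K,\ s \in \gS_i(\Q_p),\ h \in \gH(\Q_p),
\end{equation*}
where $K$ is a fixed compact subset of $\gG(\Q_p)$, the $\gS_1, \dotsc, \gS_r$ are representatives of the finitely many $\gH(\Q_p)$-conjugacy classes of maximal $(\Q_p, \dag)$-split tori, $hh^\dag = 1$, and $s^\dag = s$ lies in the commutative $\Q_p$-subalgebra $L_i \subseteq E_p$ spanned by $\gS_i(\Q_p)$.

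Now set $b = \gamma ks$ for a scalar $\gamma \in \Q_p$ to be chosen. Because $s^\dag = s$ and $hh^\dag = 1$, the hypothesis $a^\dag q a = \alpha$ collapses to $s k^\dag q k s = \alpha$, so $b^\dag q b = \gamma^2 \alpha$ lies in $\Q_p$ automatically, and lies in $\Z_p - \{0\}$ iff $2 v_p(\gamma) + v_p(\alpha) \ge 0$. To arrange $b \in R_p$, rewrite $s^2 = \alpha k^{-1} q^{-1} (k^\dag)^{-1}$: using compactness of $K$ together with $\Nm_{E_p}(q) q^{-1} \in R_p$ from \cref{norm-times-inverse}, one obtains an integer $M'$ depending only on $(p, R_p, K)$ such that
\begin{equation*}
s^2 \in p^{\,v_p(\alpha) - v_p(\Nm_{E_p}(q)) - M'} R_p.
\end{equation*}
Choosing $\gamma = p^g$ with $g := \max\bigl(\lceil (v_p(\Nm_{E_p}(q)) + M' - v_p(\alpha))/2 \rceil,\, \lceil -v_p(\alpha)/2 \rceil\bigr)$ then makes $(\gamma s)^2 \in R_p$ and $\gamma^2 \alpha \in \Z_p - \{0\}$ simultaneously.

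With $(\gamma s)^2 \in R_p$ and $\gamma s \in L_i$, \cref{square-root-integer-in-torus} applied to each of the finitely many subalgebras $L_i$ produces an integer $c_1$ depending only on $(E_p, R_p)$ with $c_1 \gamma s \in R_p$. Since $K \cdot R_p$ is compact, there is an integer $M$ with $K R_p \subseteq p^{-M} R_p$, so
\begin{equation*}
b := p^M c_1 \gamma k s = k \, (p^M c_1 \gamma s)
\end{equation*}
lies in $R_p$, and $b^\dag q b = (p^M c_1)^2 \gamma^2 \alpha \in \Z_p - \{0\}$. For the norm bound, multiplicativity of $\Nm_{E_p}$ together with $\Nm_{E_p}(s)^2 = \Nm_{E_p}(\alpha) \, \Nm_{E_p}(q)^{-1} \cdot O(1)$ and $\Nm_{E_p}(\gamma) = p^{dg}$ yields
\begin{equation*}
\Nm_{E_p}(b) = O(1) \cdot p^{dg} \cdot \Nm_{E_p}(s) = O\bigl(\Nm_{E_p}(q)^{(d-1)/2}\bigr),
\end{equation*}
the implied constant depending only on $(p, R_p, \dag, \Nm_{E_p}, K)$; this is the desired $c_p$.

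The main obstacle is bookkeeping: the three sources of $p$-adic denominators in $b$ (the shift $p^M$ from compactness of $K$, the integer $c_1$ from \cref{square-root-integer-in-torus}, and the choice $\gamma = p^g$) must each contribute only a $p$-dependent constant to the final norm bound, and the exponent on $\Nm_{E_p}(q)$ must be exactly $(d-1)/2$. This rests on the precise cancellation whereby $\Nm_{E_p}(s)$ contributes $\Nm_{E_p}(q)^{-1/2}$ while $\Nm_{E_p}(\gamma)$ contributes $\Nm_{E_p}(q)^{d/2}$, yielding the net exponent $(d-1)/2$; the factor $\Nm_{E_p}(\alpha)$ drops out entirely thanks to the choice of $g$.
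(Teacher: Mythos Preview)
Your proof is correct and follows essentially the same route as the paper: apply the Benoist--Oh polar decomposition $a = ksh$, use compactness of $K$ together with \cref{norm-times-inverse} to force a scalar multiple of $s^2$ into $R_p$, invoke \cref{square-root-integer-in-torus} on the finitely many commutative subalgebras $L_i$ to pull $s$ itself (up to a bounded scalar) into $R_p$, set $b$ to be a scalar times $ks$, and read off the norm bound from the cancellation of the $\alpha$-contribution between $\Nm_{E_p}(\gamma)$ and $\Nm_{E_p}(s)$. The only differences are cosmetic: the paper arranges $K \subset R_p$ at the outset and chooses its scalar as an explicit square root $u = \sqrt{c_{p,1}^2 n m^{-1} e}$ (with $e$ adjusting parity), whereas you carry the compactness constants $M, M'$ and take $\gamma = p^g$ with $g$ a ceiling; your second ceiling $\lceil -v_p(\alpha)/2 \rceil$ is in fact redundant since $N, M' \geq 0$, but this is harmless.
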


\begin{proof}
Let \( \gG \) be the reductive \( \Q_p \)-algebraic group with functor of points
\[ \gG(A) = (E_p \otimes_{\Q_p} A)^\times. \]
The involution \( \dag \) of \( E_p \) induces an involution of \( \gG \).
Let \( \gH \) be the subgroup of \( \dag \)-unitary elements and let \( \gS_i \) be representatives for the \( \gH(\Q_p) \)-conjugacy classes of maximal \( (\Q_p, \dag) \)-split tori in \( \gG \), as above.

Choose a compact subset \( K \subset \gG(\Q_p) \) satisfying \eqref{eqn:polar-decomposition-group}.
Because \( K \) is compact, its elements have bounded denominators.
Hence after replacing \( K \) by a scalar multiple, we may assume that \( K \subset R_p \).
Since \( K^{-1} \) is also compact, we can choose a constant \( \cp{1} \in \N \) such that \( \cp{1} K^{-1} \subset R_p \).

Let \( m = a^\dag qa \).
Because \( m \) is invertible and in the centre of \( E_p \), we can rearrange this to get
\begin{equation} \label{eqn:m-1qaa}
q^{-1} = m^{-1} \, aa^\dag.
\end{equation}

Let \( n = \Nm_{E_p}(q) \).
By \cref{norm-times-inverse}, \( nq^{-1} \in R_p \).
So \eqref{eqn:m-1qaa} implies that
\[ nm^{-1} \, aa^\dag \in R_p. \]

Using the \( p \)-adic polar decomposition, write
\[ a = ksh \]
with \( k \in K \), \( s \in \bigcup_i \gS_i(\Q_p) \) and \( h \in \gH(\Q_p) \).
Substituting this in the previous equation, and using the facts that \( hh^\dag = 1 \) and \( s = s^\dag \), we get that
\[ nm^{-1} \, ks^2k^\dag \in R_p. \]
Multiplying by \( \cp{1} k^{-1} \) on the left and \( \cp{1} (k^{-1})^\dag \) on the right, we get that
\[ \cp{1}^2 \, nm^{-1} \, s^2 \in R_p. \]

Choose \( e \in \Z_p \) such that \( \cp{1}^2 nm^{-1}e \) is a square in \( \Q_p^\times \) and \( v_p(e) = 0 \) or~\( 1 \).
Let \( u \) denote a square root in \( \Q_p^\times \) of \( \cp{1}^2 nm^{-1}e \).
We get that
\[ u^2 s^2 \in R_p. \]

Furthermore, \( us \in \bigcup_i \gS_i(\Q_p) \) because the scalars \( \Q_p^\times \) are contained in every maximal \( (\Q_p, \dag) \)-split torus of \( \gG \).
For each \( i \), the subalgebra \( L_i \subset E_p \) generated by \( \gS_i(\Q_p) \) is commutative.
We can therefore apply \cref{square-root-integer-in-torus} inside each \( L_i \).
We deduce that there is a constant \( \cp{2} \) (depending on \( R_p \) and the \( \gS_i \)) such that
\[ \cp{2} us \in R_p. \]

Letting
\[ b = \cp{2}u \, ks, \]
we get that \( b \in R_p \) and
\[ b b^\dag = \cp{2}^2 u^2 \, ks^2k^\dag = \cp{1}^2 \cp{2}^2 \, nm^{-1}e \, aa^\dag = \cp{1}^2 \cp{2}^2 \, ne \, q^{-1} \]
where the last equality follows from \eqref{eqn:m-1qaa}.
Since \( \cp{1}^2 \cp{2}^2 \, ne \) is in the centre of \( E_p \), we can rearrange this to obtain
\[ b^\dag q b = \cp{1}^2 \cp{2}^2 \, ne \in \Z_p - \{ 0 \}. \]

Finally we bound the norm of \( b \).
The above equation gives us that
\[ n \Nm_{E_p}(b)^2  = \Nm_{E_p}(\cp{1}^2 \cp{2}^2 \, ne) = \cp{1}^{2d} \cp{2}^{2d} \, n^d \Nm_{E_p}(e). \]
Since \( v_p(e) = 0 \) or \( 1 \), \( \Nm_{E_p}(e) \leq p^d \).
Hence
\[ \Nm_{E_p}(b)^2  \leq  \cp{1}^{2d} \cp{2}^{2d} \, p^d n^{d-1}. \]
So the lemma is proved with constant \( c_p = (\cp{1}^{2d} \cp{2}^{2d} p^d)^{1/2} \).
\end{proof}

\subsection{Local calculations -- split case}

Our goal now is to prove that for all but finitely many primes \( p \), \cref{arith-bound-local-non-split} holds  with \( c_p = 1 \).
Specifically we will prove this for all \( p \) such that \( E_p \) is split (meaning that \( E_p \) is a product of matrix algebras over fields), the centre of \( E_p \) is a product of unramified extensions of \( \Q_p \), \( R_p \) is a maximal order in \( E_p \) and \( p \neq 2 \).

The first step in this proof applies to simple algebras with involution (\cref{partial-bound-local-split}).
We will then obtain a result for a semisimple algebra with involution (\cref{arith-bound-local-split}) by applying \cref{partial-bound-local-split} to each of its simple factors.
In order to do this, it is not enough just to show that, for each simple factor, there exists some \( b \) such that \( b^\dag q b \in \Z_p - \{ 0 \} \).
In order that the solutions for different simple factors combine together, it is necessary that the scalars \( b^\dag qb \) should be the same in each factor.
Therefore we state \cref{partial-bound-local-split} in a form which allows to choose the \( m' \in \Z_p - \{ 0 \} \) for which we want to solve \( b^\dag qb = m' \), subject to certain constraints.

The statement of \cref{partial-bound-local-split} does not mention norms at all.
The norm bound comes from the choice of \( m' \), which will be made in the proof of \cref{arith-bound-local-split}.
The naïve choice for \( m' \) leads to a bound
\[ \Nm_{E_p}(b) \leq p^{d/2} \Nm_{E_p}(q)^{(d-1)/2}. \]
If \( q \not\in R_p^\times \), then \( \Nm_{E_p}(q) \geq p \) and so we can remove the power of \( p \) from the above bound, at the cost of weakening the exponent.
On the other hand if \( q \in R_p^\times \), then \( \Nm_{E_p}(q) = 1 \) and we have to have \( m' \in \Z_p^\times \) in order to get the desired bound from \cref{partial-bound-local-split}.
In order to achieve this, we will need an additional result on the classification of unimodular Hermitian forms (\cref{unit-forms-local-split}).

\begin{lemma} \label{partial-bound-local-split}
Let \( (E_p, \dag) \) be a simple \( \Q_p \)-algebra with involution and \( R_p \subset E_p \) a \( \dag \)-stable order.
Suppose that \( E_p \) is split, its centre is an unramified extension of \( \Q_p \), \( R_p \) is a maximal order in \( E_p \) and \( p \neq 2 \).

Let \( q \in R_p \) and \( a \in E_p \) be such that \( a^\dag q a \in \Q_p^\times \).

Let \( m = a^\dag q a \) and let \( m' \in \Z_p - \{ 0 \} \) be such that
\( m'q^{-1} \in R_p \)
and \( m'm^{-1} \) is a square in \( \Q_p^\times \).

Then there exists \( b \in R_p \) such that
\[ b^\dag q b = m'. \]
\end{lemma}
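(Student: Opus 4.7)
The plan is to translate the problem into Hermitian-form language and then apply the classification of maximal integral lattices from \Cref{lattice-in-maximal,maximal-lattices-are-isometric,symmetric-stabilizer-implies-maximal-lattice}. First I would apply \Cref{adjoint-involutions} to write \( E_p = \End_D(V) \) for a commutative \( D \) (either the centre \( F \) of \( E_p \), or a product \( F_0 \times F_0 \) with swap involution when \( \dag \) is of the second kind on a split centre) and express \( \dag \) as the adjoint involution of a non-singular Hermitian or skew-Hermitian form \( \psi \colon V \times V \to D \); commutativity of \( D \) is forced by \( E_p \) being split. Applying \( \dag \) to the identity \( a^\dag q a = m \) and using that \( m \in \Q_p^\times \) is central and \( \dag \)-fixed gives \( q^\dag = q \), so \( \psi_q(v,w) := \psi(v, qw) \) is again a non-singular Hermitian form. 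Unwinding definitions, \( a^\dag q a = m \) says exactly that \( a \) is an \( F \)-linear isometry \( (V, m\psi) \xrightarrow{\sim} (V, \psi_q) \), and writing \( t \in \Q_p^\times \) with \( t^2 = m'/m \), the scaled element \( ta \) gives an isometry \( (V, m'\psi) \xrightarrow{\sim} (V, \psi_q) \).

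Next I would introduce an integral structure. Choose a lattice \( \Lambda \subset V \) whose stabilizer in \( \End_F(V) \) is exactly \( R_p \); since \( R_p \) is a maximal \( \dag \)-stable order, \Cref{symmetric-stabilizer-implies-maximal-lattice} guarantees that \( \Lambda \) is maximal for \( \psi \) and hence for \( m'\psi \), with scale \( \fa := m' \fs\Lambda \) (scales taken with respect to \( \psi \)). The desired element \( b \) is characterised by \( b\Lambda \subset \Lambda \) (equivalent to \( b \in R_p \)) together with \( b \) being an \( F \)-linear isometry \( (V, m'\psi) \to (V, \psi_q) \). Under the latter condition \( b(\Lambda) \) is automatically a maximal \( \fa \)-lattice for \( \psi_q \), so it suffices to produce any maximal \( \fa \)-lattice \( \Lambda' \) for \( \psi_q \) contained in \( \Lambda \): once \( \Lambda' \) is in hand, the abstract isometry \( m'\psi \cong \psi_q \) from the first paragraph combined with \Cref{maximal-lattices-are-isometric} yields an \( F \)-linear isometry \( b \colon (V, m'\psi) \to (V, \psi_q) \) sending \( \Lambda \) onto \( \Lambda' \subset \Lambda \), so that \( b \in R_p \) and \( b^\dag q b = m' \).

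The crux, and the main obstacle, is producing \( \Lambda' \). The hypothesis \( m'q^{-1} \in R_p \) gives the inclusions
\[ m'\Lambda = (m'q^{-1}) (q\Lambda) \subset q\Lambda \subset \Lambda, \]
which should provide enough integrality to force the \( \psi_q \)-scale of a suitably chosen sublattice of \( \Lambda \) (for instance \( q\Lambda \), or something built from it) into \( \fa \); \Cref{lattice-in-maximal} then embeds that sublattice in a maximal \( \fa \)-lattice for \( \psi_q \). The hitch is that the lattice produced by \Cref{lattice-in-maximal} is only guaranteed to contain the given sublattice, not to lie inside \( \Lambda \). Resolving this seems to require the full strength of the split, unramified and \( p \neq 2 \) hypotheses — for example by picking an \( \fo_F \)-basis of \( \Lambda \) simultaneously orthogonalising \( \psi \) and (possibly after a preliminary adjustment by an isometry of \( \psi_q \)) diagonalising \( q \), and then using integrality of the diagonal entries \( m'/d_i \) (from \( m'q^{-1} \in R_p \)) together with square-root extraction supplied by \( m'/m \in \Q_p^{\times 2} \) to realise \( b \) coordinate-by-coordinate inside \( \fo_F \).
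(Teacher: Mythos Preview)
Your first two paragraphs match the paper's approach almost exactly: translate to Hermitian-form language, identify \( \dag \) as the adjoint of some \( \psi \), pick a lattice \( \Lambda \) with stabilizer \( R_p \) (maximal for \( \psi \) by \Cref{symmetric-stabilizer-implies-maximal-lattice}), and reduce the problem to producing an \( m'\fa \)-maximal lattice \( \Lambda' \) for \( \psi_q \) with \( \Lambda' \subset \Lambda \), after which \Cref{maximal-lattices-are-isometric} and the isometry \( ta \) finish. Your diagnosis of the obstacle is also exactly right: \Cref{lattice-in-maximal} only enlarges a given lattice, so one needs to ensure the enlargement stays inside \( \Lambda \).

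The gap is that you do not find the mechanism that forces \( \Lambda' \subset \Lambda \), and your proposed workaround via simultaneous diagonalisation is neither developed nor likely to go through uniformly in all four cases (symmetric, skew-symmetric, Hermitian over a field, Hermitian over \( F_0 \times F_0 \)). The paper's trick is a short duality computation, not a coordinate calculation. Take as starting sublattice \( m'q^{-1}\Lambda \subset \Lambda \) (you already noted this containment). The point is the identity
\[
\{\, v \in V : \psi_q(v, m'q^{-1}\Lambda) \subset m'\fa \,\}
= \{\, v \in V : \psi(v, \Lambda) \subset \fa \,\}
= \Lambda,
\]
where the first equality is straight from \( \psi_q(v, m'q^{-1}w) = m'\psi(v,w) \), and the second is exactly the statement that a maximal lattice equals its own scale-dual (shown inside the proof of \Cref{symmetric-stabilizer-implies-maximal-lattice}). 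This immediately gives \( \psi_q(m'q^{-1}\Lambda, m'q^{-1}\Lambda) \subset m'\fa \), so \Cref{lattice-in-maximal} produces an \( m'\fa \)-maximal \( \Lambda' \supset m'q^{-1}\Lambda \); but any such \( \Lambda' \) is automatically contained in the left-hand set above, hence in \( \Lambda \). No diagonalisation, no square-root extraction, and no case analysis is needed; the hypotheses on splitting, unramifiedness and \( p \neq 2 \) are used only to be in the setting of section~\ref{ssec:lattices} so that the lattice lemmas apply.
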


\begin{proof}
Let \( F \) be the centre of \( E_p \) and \( F_0 \) the subfield of \( F \) fixed by \( \dag \).
Since \( E_p \) is split, it is isomorphic to \( \End_F(V) \) for some \( F \)-module \( V \).
By \cref{adjoint-involutions}, there is an \( F_0 \)-bilinear form \( \psi \colon V \times V \to F \) such that \( \dag \) is the adjoint involution with respect to \( \psi \), and we are in the setting of section~\ref{ssec:lattices}.

Since \( R_p \) is a maximal order in \( E_p \), we can find a lattice \( \Lambda \subset V \) whose stabilizer is~\( R_p \).
Since \( R_p \) is \( \dag \)-stable, \cref{symmetric-stabilizer-implies-maximal-lattice} implies that \( \Lambda \) is a maximal lattice with respect to \( \psi \).
Let \( \fa \) be the scale of \( \Lambda \) with respect to \( \psi \).

Let \( \psi_q \) be the \( F_0 \)-bilinear form \( V \times V \to F \) given by
\[ \psi_q(v, w) = \psi(v, qw). \]
A calculation shows that
\begin{equation} \label{eqn:neq-1Lambda}
   \{ v \in V \mid \psi_q(v, m'q^{-1} \Lambda) \subset m' \fa \}
 = \{ v \in V \mid \psi(v, \Lambda) \subset \fa \}
 = \Lambda
\end{equation}
where the second equality holds because \( \Lambda \) is \( \fa \)-maximal (see the proof of \cref{symmetric-stabilizer-implies-maximal-lattice}).
Since \( m'q^{-1} \in R_p \), we have
\( m'q^{-1} \Lambda \subset \Lambda \)
and hence \eqref{eqn:neq-1Lambda} implies that
\[ \psi_q(m'q^{-1} \Lambda, m'q^{-1} \Lambda) \subset m' \fa. \]

Hence by \cref{lattice-in-maximal}, there exists a lattice \( \Lambda' \) which is \( m' \fa \)-maximal with respect to~\( \psi_q \) and which contains \( m'q^{-1} \Lambda \).
Note that \( \Lambda' \) must be contained in
\[ \{ v \in V \mid \psi_q(v, m'q^{-1} \Lambda) \subset m' \fa \} \]
and so by \eqref{eqn:neq-1Lambda}, \( \Lambda' \subset \Lambda \).

The fact that \( m = a^\dag qa \) implies that \( a\Lambda \) is an \( m\fa \)-maximal lattice with respect to \( \psi_q \).
Choosing a square root \( u \in \Q_p^\times \) of \( m^{-1} m' \), we deduce that \( ua\Lambda \) is an \( m'\fa \)-maximal lattice with respect to \( \psi_q \).

Hence by \cref{maximal-lattices-are-isometric}, \( (ua \Lambda, \psi_q) \) is isometric to \( (\Lambda', \psi_q) \).
It follows that there exists \( b \in E_p \) such that
\[ \Lambda' = b \Lambda \text{ and } b^\dag q b = (ua)^\dag q ua. \]
The fact that \( \Lambda' \subset \Lambda \) implies that \( b \in R_p \), while a calculation gives
\[ b^\dag q b = (ua)^\dag q ua = u^2 a^\dag qa = u^2 m = m'. \qedhere \]
\end{proof}

When \( q \in R_p^\times \), we will use the following lemma to enable us to choose an \( m' \in \Z_p^\times \) for use in \cref{partial-bound-local-split}.
In case (i) we can choose \( m' \in \Z_p^\times \) such that \( m' (a^\dag qa)^{-1} \) is a square.
In case (ii) we will let \( m' = 1 \) and apply \cref{partial-bound-local-split} to \( b \) coming from \cref{unit-forms-local-split} instead of the original~\( a \).

\begin{lemma} \label{unit-forms-local-split}
Let \( (E_p, \dag) \) be a simple \( \Q_p \)-algebra with involution and \( R_p \subset E_p \) a \( \dag \)-stable order.
Suppose that \( E_p \) is split, its centre is an unramified extension of \( \Q_p \), \( R_p \) is a maximal order in \( E_p \) and \( p \neq 2 \).

Let \( q \in R_p \) and \( a \in E_p \) be such that \( a^\dag q a \in \Q_p^\times \).
Suppose further that \( q \in R_p^\times \).

Then either:
\begin{enumerate}[(i)]
\item \( v_p(a^\dag qa) \) is even; or
\item there exists \( b \in E_p^\times \) such that \( b^\dag q b = 1 \).
\end{enumerate}
\end{lemma}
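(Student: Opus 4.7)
The plan is to translate the claim into a statement about isometry of unimodular Hermitian forms over local rings and split into cases by the Albert type of $(E_p, \dag)$. Since $E_p$ is split, write $E_p = \End_F(V)$, where $F$ is the centre. By \cref{adjoint-involutions}, $\dag$ is the adjoint involution of some non-singular (skew-)Hermitian form $\psi \colon V \times V \to F$. Applying $\dag$ to the equation $a^\dag q a = m \in \Q_p^\times$, and noting that invertibility of $m$ forces $a \in E_p^\times$, yields $q^\dag = q$, so the twist $\psi_q(v,w) := \psi(v, qw)$ is a form of the same type as $\psi$, and $a$ itself realises an isometry $(V, \psi_q) \cong (V, m\psi)$. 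Because $R_p$ is a $\dag$-stable maximal order, any lattice $\Lambda$ stabilised by $R_p$ is maximal for $\psi$ by \cref{symmetric-stabilizer-implies-maximal-lattice}; since $q \in R_p^\times$ preserves $\Lambda$, the same $\Lambda$ is maximal for $\psi_q$ with the same scale. Producing $b \in E_p^\times$ with $b^\dag q b = 1$ amounts to exhibiting an isometry $\psi \cong \psi_q$, which by \cref{maximal-lattices-are-isometric} reduces to checking agreement of the scalar invariants at the odd unramified prime~$p$.

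In the symplectic, second-kind, and product cases, the classification of unimodular forms leaves no invariant beyond rank at $p \neq 2$. Unimodular skew-symmetric forms over $F$ are determined by rank. In the second-kind case our hypotheses force $F/F_0$ to be an unramified quadratic extension (both $F$ and its index-two subfield $F_0$ are unramified over $\Q_p$, so residue degrees match the field degree); unimodular Hermitian forms are then classified by rank and by $\det \in F_0^\times / \Nm(F^\times)$, and every unit of $F_0$ is a norm from $\fo_F^\times$. Finally, the product case $F = F_0 \times F_0$ reduces to a pairing of two free modules, again rank-determined. In each of these cases $\psi \cong \psi_q$ unconditionally, so (ii) holds with no use of the hypothesis on $v_p(m)$.

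The only remaining case is orthogonal of the first kind: $F = F_0$ is an unramified extension of $\Q_p$ and $\psi$ is symmetric bilinear. Unimodular quadratic forms over $F$ at odd $p$ are classified by rank and by determinant modulo $\fo_F^{\times 2}$, the Hasse invariant of a unit form being trivial. Taking the $F$-linear determinant of $a^\dag q a = m$ and using that an orthogonal $\dag$ preserves $\det_F$ gives $\det_F(q) \equiv m^r \pmod{F^{\times 2}}$ with $r = \dim_F V$; since $q \in R_p^\times$ we have $\det_F(q) \in \fo_F^\times$, so $r \cdot v_p(m)$ must be even. If $v_p(m)$ is odd then $r$ is even, whence $m^r$ is a square, $\det_F(q) \in \fo_F^\times \cap F^{\times 2} = \fo_F^{\times 2}$, and $\psi \cong \psi_q$. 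The main obstacle is the bookkeeping across Albert types and correctly identifying which invariants of unimodular forms survive at an odd unramified prime; the clean trick making the orthogonal case work is that oddness of $v_p(m)$ combined with $q$ being a unit forces the rank to be even, which is exactly what kills the discriminant obstruction.
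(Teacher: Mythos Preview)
Your proof is correct and follows essentially the same approach as the paper: both translate the problem into showing $(V,\psi)\cong(V,\psi_q)$ and then run a case analysis by the type of $\psi$, using the local classification of (skew-)Hermitian forms; the orthogonal case is the only one with content, and your contrapositive argument (if $v_p(m)$ is odd then $r$ is even, hence $\det_F(q)\in\fo_F^{\times 2}$) is logically equivalent to the paper's split on the parity of $n=\dim_F V$.

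Two minor points. First, your appeal to \cref{maximal-lattices-are-isometric} is misplaced: that lemma compares two $\fa$-maximal lattices for a \emph{single} form, whereas here you must compare the two forms $\psi$ and $\psi_q$; in fact your later paragraphs do exactly this via the classification, so the reference is unnecessary. Second, to invoke ``the Hasse invariant of a unit form is trivial'' (and the unit-norm argument in the second-kind case) you need $(\Lambda,\psi)$ to be unimodular, not merely maximal; the paper secures this by noting that the Gram matrix $z$ of $\psi$ normalises $R_p\cong\rM_n(\fo_F)$, so $z\in F^\times\GL_n(\fo_F)$ and one may scale $\psi$ to have $z\in\GL_n(\fo_F)$. (Equivalently, the proof of \cref{symmetric-stabilizer-implies-maximal-lattice} actually establishes $\Lambda=\Lambda^*$, i.e.\ self-duality.)
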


\begin{proof}
We use the same notation \( F \), \( F_0 \), \( V \), \( \psi \), \( \psi_q \) as in the proof of \cref{partial-bound-local-split}.

The hypothesis that \( a^\dag qa \in \Q_p^\times \) tells us that \( (V, \psi_q) \) is isometric to \( (V, m\psi) \) for the scalar \( m = a^\dag qa \in \Q_p^\times \).
In order to show that there exists \( b \in E_p^\times \) such that \( b^\dag q b = 1 \), we have to show that in fact \( (V, \psi_q) \) is isometric to \( (V, \psi) \).
There is one case in which it is not true that \( (V, \psi_q) \) is isometric to \( (V, \psi) \); in this case we will show instead that \( v_p(m) \) is even, leading to the two cases in the conclusion of the lemma.

We proceed in cases according to the type of the form \( \psi \).

\begin{enumerate}
\setlength{\itemsep}{1em}

\item \( F = F_0 \) and \( \psi \) is symmetric.

Choose a basis for \( V \) as an \( F \)-vector space, inducing an isomorphism \( E_p \cong \rM_n(F) \).
Since \( R_p \) is a maximal order in \( E_p \), we can choose the basis such that \( R_p \) corresponds to \( \rM_n(\fo_F) \).

We can write the quadratic form \( \psi \) in the form
\[ \psi(v, w) = v^t zw \]
for some symmetric matrix \( z \in \rM_n(F) \).
Then the adjoint involution is given by
\[ x^\dag = z^{-1} x^t z. \]
Since \( R_p \) is \( \dag \)-stable, this implies that \( z \) normalizes \( R_p \).
The normalizer of \( R_p \) is \( F^\times R_p^\times \), so after multiplying \( \psi \) by a scalar in \( F^\times \) (which does not change the defining property of \( \psi \), namely that its adjoint involution is~\( \dag \)), we may assume that \( z \in R_p^\times = \GL_n(\fo_F) \) and so \( \psi \) is a unimodular quadratic form.

Since \( q \in R_p^\times \), this implies that \( zq \in R_p^\times \) and hence \( \psi_q \) is also unimodular.
Since \( p \neq 2 \), \cite{omeara:quadratic-forms} 92:1 implies that unimodular quadratic forms over \( \fo_F \) are classified by their dimension and their determinant in \( F^\times / F^{\times 2} \).

The fact that \( (V, \psi_q) \) and \( (V, m\psi) \) are isometric implies that
\begin{equation} \label{eqn:square-classes2}
\det(\psi_q) = \det(m\psi) = m^n\det(\psi) \text{ in } F^{\times} / F^{\times 2}.
\end{equation}

If \( n \) is even, then \eqref{eqn:square-classes2} tells us at once that 
\( \det(\psi_q) = \det(\psi) \) in \( F^\times / F^{\times 2} \)
and so \( (V, \psi_q) \) is isometric to \( (V, \psi) \).

If \( n \) is odd, then \eqref{eqn:square-classes2} implies that
\[ \det(\psi_q) = m\det(\psi) \text{ in } F^\times / F^{\times 2}. \]
Since \( \psi_q \) and \( \psi \) are both unimodular, we deduce that
\[ m \in \fo_F^\times \, F^{\times 2}. \]
Since \( F/\Q_p \) is unramified, this implies that \( v_p(m) \) is even.

\item \( F = F_0 \) and \( \psi \) is skew-symmetric.

Isometry classes of skew-symmetric forms over a field are classified by their dimension alone, so \( (V, \psi) \) and \( (V, \psi_q) \) are isometric.

\item \( F/F_0 \) is a quadratic extension of fields and \( \psi \) is Hermitian.

For local fields \( F \) and \( F_0 \), isometry classes of non-singular \( F/F_0 \)-Hermitian forms are classified by their dimension and their determinant in \( F_0^\times / \Nm_{F/F_0}(F^\times) \) (see \cite{scharlau:quadratic-forms} Examples~10.1.6(ii)).

Since \( \psi \) and \( \psi_q \) are both unimodular, \( \det \psi \) and \( \det \psi_q \) are both in \( \fo_{F_0}^\times \).
Since the extension \( F/F_0 \) is unramified, \( \Nm_{F/F_0}(F^\times) \) contains \( \fo_{F_0}^\times \).
Hence \( (V, \psi) \) and \( (V, \psi_q) \) are isometric.

\item \( F = F_0 \times F_0 \) and \( \psi \) is Hermitian.

By \cite{scharlau:quadratic-forms} Example~7.2.7, all non-singular \( (F_0 \times F_0)/F_0 \)-Hermitian forms of the same dimension are isometric, in particular \( (V, \psi) \) and \( (V, \psi_q) \).
\qedhere
\end{enumerate}
\end{proof}

\begin{corollary} \label{arith-bound-local-split}
Let \( (E_p, \dag) \) be a semisimple \( \Q_p \)-algebra with involution, \( R_p \subset E_p \) a \( \dag \)-stable order, and \( \Nm_{E_p} \) a \( \dag \)-compatible norm on \( E_p \) of rank~\( d \).
Suppose that \( E_p \) is split, its centre is a product of unramified extensions of \( \Q_p \), \( R_p \) is a maximal order in \( E_p \) and \( p \neq 2 \).

For every \( q \in R_p \), if there exists \( a \in E_p \) such that \( a^\dag q a \in \Q_p^\times \),
then there exists \( b \in R_p \) such that
\[ b^\dag q b \in \Z_p - \{ 0 \} \text{ and } \Nm_{E_p}(b) \leq \Nm_{E_p}(q)^{d-1/2}. \]
\end{corollary}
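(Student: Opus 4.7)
The plan is to decompose \( (E_p, \dag) \) into its simple-with-involution blocks and apply \cref{partial-bound-local-split} in each block with a single shared scalar \( m' \in \Z_p - \{ 0 \} \); the case \( q \in R_p^\times \) will then require \cref{unit-forms-local-split} as a separate input.

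Write \( E_p = \prod_j E_p^{(j)} \), where each \( (E_p^{(j)}, \dag) \) is either a single simple factor of \( E_p \) preserved by \( \dag \) or a pair of simple factors swapped by \( \dag \); in either case this is a simple \( \Q_p \)-algebra with involution in the sense of section~2, inherits the maximal order \( R_p^{(j)} = R_p \cap E_p^{(j)} \), and satisfies the hypotheses of \cref{partial-bound-local-split,unit-forms-local-split}. Write \( q = (q_j)_j \), \( a = (a_j)_j \) and note that \( m := a^\dag q a \in \Q_p^\times \) is the same scalar in every block.

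For a single \( m' \) to fit the hypotheses of \cref{partial-bound-local-split} in every block simultaneously, one needs (i) \( m' q^{-1} \in R_p \) and (ii) \( m'/m \in \Q_p^{\times 2} \). Set \( n := \Nm_{E_p}(q) \); by \cref{norm-times-inverse}, \( n q^{-1} \in R_p \), so (i) holds whenever \( v_p(m') \geq v_p(n) \). Since \( p \neq 2 \), the group \( \Q_p^\times / \Q_p^{\times 2} \) has order four, cut out by the parity of the valuation together with a single unit residue class; hence there exists \( m' \in \Z_p - \{0\} \) with \( v_p(m') \in \{ v_p(n), v_p(n)+1 \} \), with parity matching \( v_p(m) \) and with the correct unit square class to make \( m'/m \) a square. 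Applying \cref{partial-bound-local-split} in each block with data \( (q_j, a_j, m, m') \) produces \( b_j \in R_p^{(j)} \) with \( b_j^\dag q_j b_j = m' \); combining, \( b := (b_j)_j \in R_p \) satisfies \( b^\dag q b = m' \in \Z_p - \{0\} \). The \( \dag \)-compatibility of \( \Nm_{E_p} \) gives
\[ \Nm_{E_p}(b)^2 \cdot n = \Nm_{E_p}(m') = p^{d \, v_p(m')} \leq p^{d (v_p(n) + 1)}, \]
so \( \Nm_{E_p}(b)^2 \leq p^{(d-1) v_p(n) + d} \). When \( q \notin R_p^\times \) we have \( v_p(n) \geq 1 \), and the right-hand side is bounded by \( p^{(2d-1) v_p(n)} = n^{2d-1} \), yielding the desired \( \Nm_{E_p}(b) \leq n^{d - 1/2} \).

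The main obstacle is the unit case \( q \in R_p^\times \), where \( v_p(n) = 0 \) and the estimate above degenerates to \( p^d \). This is resolved using \cref{unit-forms-local-split} blockwise. If \( v_p(m) \) is even, alternative~(i) of the lemma holds in every block and we may take \( m' \in \Z_p^\times \) in the appropriate square class, so the argument above gives \( \Nm_{E_p}(b) = 1 = n^{d-1/2} \). If \( v_p(m) \) is odd, then alternative~(i) fails in every block, so alternative~(ii) provides \( b_j^* \in E_p^{(j),\times} \) with \( (b_j^*)^\dag q_j b_j^* = 1 \); combining gives \( b^* \in E_p^\times \) with \( (b^*)^\dag q b^* = 1 \). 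Re-running the argument with \( a \) replaced by \( b^* \) and \( m \) by \( 1 \), one may take \( m' = 1 \) in \cref{partial-bound-local-split} (its hypotheses hold trivially because \( q \in R_p^\times \)) and obtain \( b \in R_p \) with \( b^\dag q b = 1 \), again giving \( \Nm_{E_p}(b) = 1 \). The delicate point throughout is the coordination of the single scalar \( m' \) across all the blocks, which is possible precisely because the hypotheses on \( m' \) in \cref{partial-bound-local-split} factor over the blocks.
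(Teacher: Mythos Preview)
Your proof is correct and follows essentially the same route as the paper: decompose into simple-with-involution factors, in the non-unit case choose a common $m'$ of the form $ne$ with $v_p(e)\in\{0,1\}$ and apply \cref{partial-bound-local-split} blockwise, and in the unit case use \cref{unit-forms-local-split} to force $m'\in\Z_p^\times$ (splitting on the parity of $v_p(m)$). The only differences are cosmetic, e.g.\ you describe $m'$ via its valuation directly rather than writing $m'=ne$.
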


\begin{proof}
Write the algebra \( E_p \) as a direct product
\[ E_p = \prod_i E_i \]
where each \( E_i \) is a simple algebra with involution.
Since \( R_p \) is a maximal order in~\( E_p \), it is a direct product of maximal orders \( R_i \subset E_i \).

Let \( m = a^\dag qa \) and \( n = \Nm_{E_p}(q) \).

We have two cases, depending on whether \( q \in R_p^\times \) or not.

\subsubsection*{Case 1}
If \( q \in R_p^\times \), then we apply \cref{unit-forms-local-split} to each factor \( E_i \).
If conclusion (i) of \cref{unit-forms-local-split} holds for at least one factor \( E_i \), then we know that \( v_p(m) \) is even.
We can therefore choose some \( m' \in \Z_p^\times \) such that \( m'm^{-1} \) is a square in \( \Q_p^\times \).
Since \( q \in R_p^\times \), \( m'q^{-1} \in R_p \) and so we can apply \cref{partial-bound-local-split} in each factor \( E_i \) to \( q_i \), \( a_i \) and \( m' \) to obtain \( b_i \in R_i \) such that \( b_i^\dag q_i b_i = m' \).

Otherwise (still within the case \( q \in R_p^\times \)), conclusion (ii) of \cref{unit-forms-local-split} holds in every factor~\( E_i \).
In other words, for each \( i \), there exists \( b_i \in E_i^\times \) such that \( b_i^\dag q_i b_i = 1 \).
In this case, simply let \( m' = 1 \).
\Cref{partial-bound-local-split} allows us to upgrade \( b_i \in E_i \) to \( b_i \in R_i \).

Letting \( b \) be the element of \( R_p \) whose components are the \( b_i \), we get that
\[ b q b^\dag = m' \in \Z_p - \{ 0 \}. \]
The fact that \( m' \in \Z_p^\times \) implies that \( b \in R_p^\times \) and so
\[ \Nm_{E_p}(b) = 1 = \Nm_{E_p}(q)^{d-1/2}. \]

\subsubsection*{Case 2}
If \( q \not\in R_p^\times \), choose \( e \in \Z_p \) such that \( v_p(e) = 0 \) or \( 1 \) and \( nem^{-1} \) is a square in \( \Q_p^\times \).
By \cref{norm-times-inverse}, \( neq^{-1} \in R_p \).
So in each factor \( E_i \), we can apply \cref{partial-bound-local-split} to \( q_i \) and~\( a_i \) with \( m' = ne \).

The resulting \( b_i \) fit together to give \( b \in R_p \) such that
\[ b^\dag q b = ne \in \Z_p - \{ 0 \}. \]
This implies that
\[ \Nm_{E_p}(b)^2 \Nm_{E_p}(q) = \Nm_{E_p}(ne) \leq n^d p^d \]
and hence
\[ \Nm_{E_p}(b) \leq n^{(d-1)/2} p^{d/2}. \]
Since \( q \not\in R_p^\times \), we have \( p \leq n \) so this implies that \( \Nm_{E_p}(b) \leq n^{d-1/2} \).
\end{proof}

\subsection{Global arguments}

We are now ready to complete the proof of \cref{arith-bound-global}, deducing it from \cref{arith-bound-local-non-split,arith-bound-local-split}.

We are given a semisimple \( \Q \)-algebra \( E \) with involution~\( \dag \) and a \( \dag \)-stable order \( R \subset E \).
We will work in the adelic points of the group \( \gU \) of \( \dag \)-quasi-unitary elements of \( R \), that is, the \( \Z \)-group scheme with functor of points
\[ \gU(A) = \{ u \in (R \otimes_\Z A)^\times \mid uu^\dag \in A^\times \}. \]

We are given \( q \in R \) and \( a \in E \) such that \( a^\dag qa \in \Q^\times \).
Our first step is to choose \( b_p \in R_p \) for each \( p \) such that \( b_p^\dag qb_p \in \Q_p^\times \), and the norms of \( b_p \) are bounded according to \cref{arith-bound-local-non-split,arith-bound-local-split}.
We then look at
\[ u_p = b_p^{-1} a. \]
Some calculations show that the \( u_p \) are components of an adelic element \( \mathbf{u} \in \gU(\A_f) \).

Finiteness of the adelic class set of \( \gU \) allows us to write \( \mathbf{u} \) as a product of an element \( x \) of \( \gU(\Q) \), an element \( \mathbf{g}_i \) from a fixed finite set, and an element \( \mathbf{y} \) of \( \gU(\A_f) \) which is a unit at every prime.
Then
\[ b = ax^{-1} \]
is an element of \( E \) which satisfies \( b^\dag qb \in \Z - \{ 0 \} \).
Another calculation shows that the norm of \( b \) is not far away from \( \prod_p \Nm_{E_p}(b_p) \), and hence satisfies the required bound.

\begin{proof}[Proof of \cref{arith-bound-global}]
By hypothesis, we have \( q \in R \) and \( a \in E \) such that
\[ a^\dag q a \in \Q^\times. \]
Let \( m = a^\dag q a \).
Since \( m \) is in the centre of \( E \), we can deduce that \( m q^{-1} = a a^\dag \).

Applying \cref{arith-bound-local-non-split} in each localization \( R_p = R \otimes_\Z \Z_p \),
we get \( b_p \in R_p \) such that
\[ b_p^\dag q b_p \in \Z_p - \{ 0 \} \text{ and } \Nm_{E_p}(b_p) \leq c_p\Nm_{E_p}(q)^{d-1/2}. \]

For all but finitely many primes \( p \), we can use \cref{arith-bound-local-split} to choose \( b_p \) as above with \( c_p = 1 \); furthermore the set of exceptional \( p \) depends only on \( (R, \dag) \).

For all but finitely many \( p \), we have \( \Nm_{E_p}(q) = 1 \) (this time, the set of exceptions depends on \( q \)).
If also \( c_p = 1 \), then the above bound says that \( \Nm_{E_p}(b_p) = 1 \).
For these \( p \), \cref{norm-times-inverse} tells us that \( b_p \in R_p^\times \).

\medskip

Let
\[ u_p = b_p^{-1} a \in E_p^\times. \]
Then
\[ u_p u_p^\dag = b_p^{-1} a a^\dag b_p^{\dag -1} = b_p^{-1} mq^{-1} b_p^{\dag -1} = m (b_p^\dag q b_p)^{-1} \in \Q_p^\times \]
so \( u_p \in \gU(\Q_p) \).
Furthermore, for all but finitely many \( p \), \( a \in R_p^\times \) and \( b_p \in R_p^\times \) so the \( u_p \) are components of an adelic element \( \mathbf{u} \in \gU(\A_f) \).

By \cite{platonov-rapinchuk} Theorem~5.1, the double coset space
\[ \prod \gU(\Z_p) \backslash \gU(\A_f) / \gU(\Q) \]
is finite.
Choose representatives \( \mathbf{g}_1, \dotsc, \mathbf{g}_r \) for these double cosets.
By multiplying them by suitable elements of \( \Q^\times \), we clear denominators so that
\[ \mathbf{g}_i \in \prod_p R \otimes_\Z \Z_p \]
for each~\( i \).

We can decompose \( \mathbf{u} \) as
\[ \mathbf{u} = \mathbf{y} \mathbf{g}_i x \]
for some \( \mathbf{y} \in \prod_p \gU(\Z_p) \), \( \mathbf{g}_i \) among our chosen double coset representatives and \( x \in \gU(\Q) \).

\medskip

Let \( b = ax^{-1} \in E \).
We claim that \( b \) satisfies the conditions of the proposition.

At each prime \( p \), we have that
\[ b = ax^{-1} = b_p u_p x^{-1} = b_p y_p g_{i,p} \in R_p. \]
Hence \( b \in \bigcap_p R_p = R \).

Next
\[ b^\dag q b = x^{\dag -1} a^\dag qa x^{-1} = m x^{\dag -1} x^{-1}. \]
Now \( x^{\dag -1} x^{-1} \in \Q^\times \) because \( x \in \gU(\Q) \), so we conclude that
\[ b^\dag q b \in \Q^\times. \]
In fact \( b^\dag q b \in \Z - \{ 0 \} \) because \( b \) and \( q \) are both in \( R \) and \( R \cap \Q^\times = \Z - \{ 0 \} \).

Finally we have to bound
\[ \Nm_E(b) = \prod_p \bigl( \Nm_{E_p}(b_p) \Nm_{E_p}(y_p) \Nm_{E_p}(g_{i,p}) \bigr). \]

For all \( p \), \( \Nm_{E_p}(y_p) = 1 \) because \( y_p \in R_p^\times \).

For each \( i \), \( \Nm_{E_p}(g_{i,p}) = 1 \) for all but finitely many \( p \), and so the following constant is well-defined:
\[ c_0 = \max_{1 \leq i \leq r} \prod_p \Nm_{E_p}(g_{i,p}). \]
Recall that we can choose \( \mathbf{g}_i \) depending only on \( (R, \dag) \) so \( c_0 \) depends only on \( (R, \dag, \Nm_E) \).

Using the bounds on \( \Nm_{E_p}(b_p) \) from \cref{arith-bound-local-non-split,arith-bound-local-split}, we get that
\[ \Nm_{E}(b) \leq c_0 \prod_p \bigl( c_p \Nm_{E_p}(q)^{d/1-2} \bigr) = c_0 \bigl( \prod_p c_p \bigr) \Nm_E(q)^{d-1/2} \]
and so \cref{arith-bound-global} holds with \( c = c_0 \prod_p c_p \).
\end{proof}

\subsection*{Acknowledgements}

I am grateful to Emmanuel Ullmo and Andrei Yafaev for useful conversations about the work in this paper.
I would like to thank the MathOverflow user WKC who suggested the method of proof of \cref{partial-bound-local-split}.
I also thank the referee for helpful comments.

During the writing of this paper, the author was supported by a doctoral grant from Université Paris Sud and by European Research Council grant 307364 ``Some problems in Geometry of Shimura Varieties.''

\bibliographystyle{amsalpha}
\bibliography{polarised}

\end{document}